\newtheorem{thm}{Theorem}[section]
\newtheorem{lem}[thm]{Lemma}
\newtheorem{prop}[thm]{Proposition}
\newtheorem{cor}[thm]{Corollary}
\newtheorem{question}[thm]{Question}
\newtheoremstyle{named}{}{}{\itshape}{}{\bfseries}{.}{.5em}{#1 \thmnote{#3}}
\theoremstyle{named}
\newtheorem*{namedthm}{Theorem}
\newtheorem*{namedcor}{Corollary}
\theoremstyle{remark}
\newtheorem{definition}[thm]{Definition}
\newtheorem{example}[thm]{Example}
\newtheorem*{remark}{Remark}
\newtheorem*{claim}{Claim}
\author{Michael Harrison}
\address[MH]{Institute for Advanced Study, 1 Einstein Drive, Princeton, NJ  08540}
\email{mah5044@gmail.com} 
\thanks{Portions of this work were completed at Mathematisches Forschungsinstitut Oberwolfach, where the author was supported by an Oberwolfach Leibniz Fellowship.  Other portions of this work were completed while the author was in residence at the Institute for Advanced Study and supported by the National Science
Foundation under Grant No. DMS-1926686.  The author is grateful to both institutes for their hospitality.}
\title{Skew and sphere fibrations}
\newcommand{\R}{\mathbb R}
\newcommand{\C}{\mathbb C}
\newcommand{\N}{\mathbb N}
\newcommand{\Ham}{\mathbb H}
\newcommand{\Oct}{\mathbb O}
\newcommand{\Hom}{\textnormal{Hom}}
\newcommand{\Span}{\textnormal{Span}}
\newcommand{\Ker}{\textnormal{Ker}}
\newcommand{\Id}{\textnormal{I}}
\newcommand{\rank}{\textnormal{rank}}
\newcommand{\affgrass}{AG_k(n)}
\newcommand{\grass}{G_k(n)}
\newcommand{\grassy}{G_{k+1}(n+1)}
\newcommand{\bee}{\mathscr{B}}
\newcommand{\Cee}{\mathscr{C}}
\begin{document}

\maketitle

\begin{abstract}
A great sphere fibration is a sphere bundle with total space $S^n$ and fibers which are great $k$-spheres.  Given a smooth great sphere fibration, the central projection to any tangent hyperplane yields a \emph{nondegenerate} fibration of $\R^n$ by pairwise skew, affine copies of $\R^k$ (though not all nondegenerate fibrations can arise in this way).  Here we study the topology and geometry of nondegenerate fibrations,  we show that every nondegenerate fibration satisfies a notion of Continuity at Infinity, and we prove several classification results. These results allow us to determine, in certain dimensions, precisely which nondegenerate fibrations correspond to great sphere fibrations via the central projection.  We use this correspondence to reprove a number of recent results about sphere fibrations in the simpler, more explicit setting of nondegenerate fibrations.  For example, we show that every germ of a nondegenerate fibration extends to a global fibration, and we study the relationship between nondegenerate line fibrations and contact structures in odd-dimensional Euclidean space.   We conclude with a number of partial results, in hopes that the continued study of nondegenerate fibrations, together with their correspondence with sphere fibrations, will yield new insights towards the unsolved classification problems for sphere fibrations.
\end{abstract}

\section{Introduction}

\subsection{Motivation}
A \emph{(great) sphere fibration} is a sphere bundle with total space $S^n$ and fibers which are oriented great $k$-spheres.  Algebraic topology imposes strong restrictions on the possible dimensions $k$ and $n$ for which such fibrations may exist.  In particular, the only possible fiber dimensions are $0, 1, 3$, and $7$, which leads to the following complete list of dimensions for great sphere fibrations:
\begin{compactitem}
\item the sphere $S^0$ fibers every sphere,
\item the sphere $S^1$ fibers odd-dimensional spheres,
\item the sphere $S^3$ fibers the spheres $S^{4k-1}$, and
\item the sphere $S^7$ fibers $S^{15}$.
\end{compactitem}

Examples of sphere fibrations are the \emph{Hopf fibrations}.  The Hopf fibrations of odd-dimensional spheres $S^{2k-1}$ by great circles arise by choosing an orthogonal complex structure on $\R^{2k}$ and intersecting the unit sphere with all of the complex lines in $\R^{2k}$.  Similar constructions using quaternionic and octonionic structures yield the Hopf fibrations with higher-dimensional great-sphere fibers.  A thorough yet approachable treatment of the geometry of the Hopf fibrations may be found in \cite{GluckWarnerZiller}.  %As formulated by Petro \cite{Petro}, there are three looming questions in the theory of sphere fibrations.

Studies of great sphere fibrations have focused primarily on three questions:
\begin{question} \
\label{ques:main}

\begin{compactenum}[a)]
\item Is every sphere fibration topologically (smoothly) equivalent to a Hopf fibration?  That is, given a continuous (smooth) sphere fibration $S^k \to S^n$, does there exist a fiber-preserving homeomorphism (diffeomorphism) of $S^n$ taking the fibration to a Hopf fibration?
\item Does there exist a path (in the space of sphere fibrations) beginning at any given sphere fibration and ending at a Hopf fibration? 
\item Is there a deformation retract from the space of all sphere fibrations to its subspace of Hopf fibrations?
\end{compactenum}
\end{question}

The first question is of particular interest because it would settle certain special cases of the Blaschke conjecture; see \cite{McKay2} for an excellent summary of the current progress on this conjecture.

In their seminal work \cite{GluckWarner}, Gluck and Warner answered all three questions positively for great circle fibrations of $S^3$.  Gluck, Warner, and Yang \cite{GluckWarnerYang} answered the first question positively for fibrations of $S^7$ by great sphere copies of $S^3$ and for fibrations of $S^{15}$ by great sphere copies of $S^7$.  

Unfortunately, many of the subsequent treatments of these questions contain errors.    The smooth equivalence referenced in the first question was conjectured for great circle fibrations of higher-dimensional spheres by Yang.  Several attempted proofs were published, but eventually flaws were found in each.  The history is well-catalogued in the Introduction of \cite{McKay} (note that the arXiv version has been updated since the published version, to correct some errors pointed out to McKay by Ballmann and Grove).   McKay had claimed a positive answer to Question \ref{ques:main}(a) for great circle fibrations in any dimension, but in the end his argument holds only for great circle fibrations which satisfy a certain nondegeneracy condition (it is unknown if there are any great circle fibrations which do not satisfy this condition).   If nothing else, past studies of these three questions show that the topology and geometry of great sphere fibrations is very subtle, and  extreme care is necessary in their study.

In an attempt to revitalize the study of great sphere fibrations, we have developed machinery which allows us to study these objects more explicitly.  Given a fibered sphere $S^n$, positioned as the unit sphere in $\R^{n+1}$, the central projection of the open upper hemisphere of $S^n$ to the tangent hyperplane $\R^n$ at the north pole of $S^n$ sends great spheres $S^k$ to affine subspaces $\R^k$ of $\R^n$.  Indeed, each great sphere fiber $S^k$ is the intersection of $S^n$ with a linear copy of $\R^{k+1} \subset \R^{n+1}$, and the image of the fiber $S^k$ under the central projection %(more precisely: the image of the portion lying in the upper hemisphere of $S^n$)
is the intersection of that $\R^{k+1}$ with the tangent hyperplane $\R^n$.  The resulting affine subspaces $\R^k$ do not intersect, since their preimage great hemispheres do not intersect.  Moreover, because the spherical fibers do not intersect on the equator of $S^n$, the copies of $\R^k$ do not meet at infinity; that is, no two fibers contain parallel lines.  In this way, every sphere fibration induces a fibration of $\R^n$ by pairwise skew affine $k$-planes,  so the space of all sphere fibrations sits naturally inside the space of these so-called \emph{skew fibrations}.   The skew line fibration depicted in Figure \ref{fig:skewlines} is the image, under central projection, of the Hopf fibration of $S^3$.

\begin{figure}[h!t]
\centerline{
\includegraphics[width=2.5in]{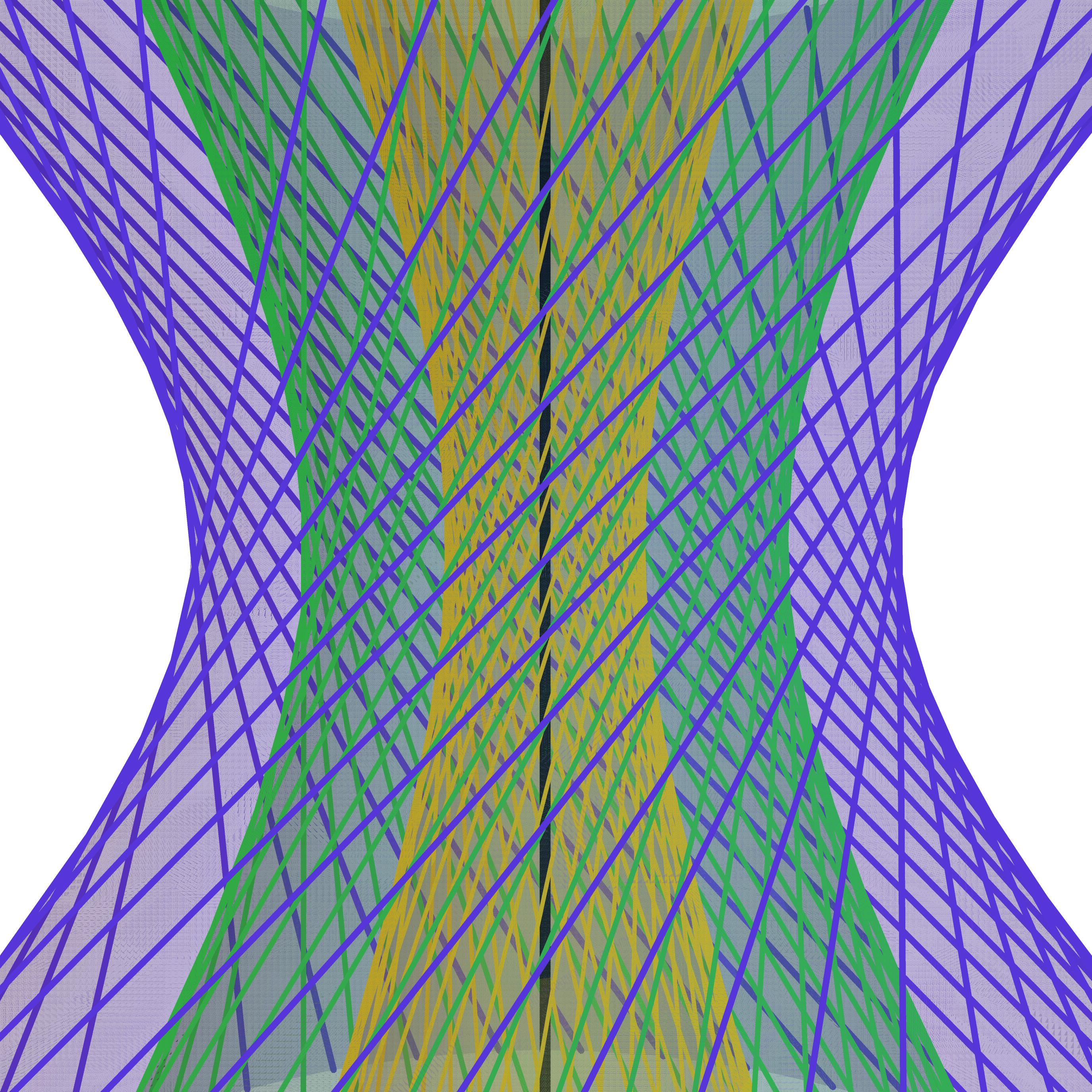}
}
\caption{Skew Hopf fibration of $\R^3$}
\label{fig:skewlines}
\end{figure}

It is worth noting that there exist skew fibrations which do not correspond via central projection to any great sphere fibration, and in fact, there exist many pairs of dimensions $k$ and $n$ which admit skew fibrations but no sphere fibrations; see Section \ref{sec:prelim} for details.

In \cite{Harrison}, we developed the theory of skew line fibrations of $\R^3$ to contribute to the study of great circle fibrations.  In particular, we answered all three parts of Question \ref{ques:main} affirmatively for skew fibrations of $\R^3$, and in the process we provided an alternative method for answering all three questions affirmatively for great circle fibrations of $S^3$.   Skew line fibrations of $\R^3$ have also been studied for reasons not directly related to the study of great circle fibrations, as the theory is interesting in its own right.  Salvai gave a geometric classification of smooth line fibrations of $\R^3$ \cite{Salvai}; whereas the relationship between line fibrations of $\R^3$ and contact structures on $\R^3$ was studied by the author \cite{HarrisonAGT, HarrisonBLMS} and by Becker and Geiges \cite{BeckerGeiges}.  %Geodesic fibrations of hyperbolic space were studied by Godoy and Salvai \cite{GodoySalvai}.

%In fact,  we showed that the deformation retract from the space of all skew fibrations to the Hopf fibrations can be written explicitly as a family of straight-line homotopies.   With this in mind, it is reasonable to expect that a thorough study of skew fibrations would yield explicit and new results in the theory of sphere fibrations.

Despite the attention received by line fibrations of $\R^3$, there has been no attempt to classify the spaces of skew fibrations in higher dimensions. Our present goal is twofold: to initiate the study of higher-dimensional skew fibrations, and to carefully examine the relationship between skew and spherical fibrations in higher dimensions.   We find that skew fibrations are simpler to describe than spherical fibrations, and we see that some standard results for great sphere fibrations are more accessible for skew fibrations.  In certain cases we find that these results and explicit constructions can then be imported to the language of sphere fibrations.  It is reasonable to believe that after the core ideas for skew fibrations have been sufficiently developed, these objects become an important aspect of future discussions involving great sphere fibrations.  This article is a first step towards this broad goal.

\subsection{Statement of Results}
The paper is organized as follows.  In Section \ref{sec:prelim} we review necessary definitions and results for nonsingular bilinear maps and Grassmann manifolds, each of which play an integral role in the study of skew fibrations.  In Section \ref{sec:fibrations} we initiate a thorough study of the topology and geometry of skew and \emph{nondegenerate} (``first-order skew'') fibrations.   We introduce a special local vector bundle structure which is useful for explicit constructions for skew fibrations, and in Theorems \ref{thm:globalclass}, \ref{thm:skewclass}, and \ref{thm:nonclass} (which require too much notation to state here),  we prove classification results for skew and nondegenerate fibrations in the local and global settings.

In Section \ref{sec:contatinf} we study the key property of ``Continuity at Infinity'' which is satisfied by all locally skew fibrations of $\R^n$.  One important and surprising consequence is the following statement, which asserts that if parallelism occurs in a fibration of $\R^n$ by (not necessarily skew) copies of $\R^k$, then it must occur locally.

\begin{thm}
\label{thm:lg}
If a fibration of $\R^n$ by copies of $\R^k$ is locally skew, then it is globally skew.
\end{thm}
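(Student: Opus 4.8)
The plan is to argue by contradiction: suppose we have a fibration $\mathcal{F}$ of $\R^n$ by affine copies of $\R^k$ which is locally skew but not globally skew, so there exist two distinct fibers $F_0$ and $F_1$ which contain parallel lines (equivalently, whose directions share a common line in $G_k(n)$). The idea is to connect $F_0$ to $F_1$ by a path of fibers and track where parallelism first fails, using local skewness to derive a contradiction there. Concretely, pick a point $p_0 \in F_0$ and a point $p_1 \in F_1$, and let $\gamma\colon [0,1] \to \R^n$ be a continuous path from $p_0$ to $p_1$; composing with the projection to the leaf space, this gives a continuous one-parameter family of fibers $F_t = F_{\gamma(t)}$ (here I use that $\mathcal{F}$ is a genuine fibration, so the leaf through a point varies continuously, and the direction map $t \mapsto \mathrm{dir}(F_t) \in G_k(n)$ is continuous).

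Next I would set up the ``bad set'' $B = \{(s,t) \in [0,1]^2 : F_s \text{ and } F_t \text{ contain parallel lines}\}$; it contains the diagonal and the point $(0,1)$, and it is closed by Continuity at Infinity (the property established in Section \ref{sec:contatinf} — continuity of the direction map into the Grassmannian means the locus where two directions meet at infinity is closed). Local skewness says precisely that near the diagonal $B$ consists only of the diagonal itself: there is a neighborhood of each $(t,t)$ in which $F_s$ and $F_t$ are skew for $s \neq t$. The goal is to show $B$ is also open away from the diagonal, or rather to find a contradiction with connectedness. The cleaner route: for fixed $s$, consider $T_s = \{t : (s,t) \in B\}$. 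This is a closed subset of $[0,1]$ not containing a punctured neighborhood of $s$. I want to show that if $t^* = \inf\{t > s : t \in T_s\}$ (assuming this set is nonempty, which it is when $s = 0$ since $1 \in T_0$), then $t^* \in T_s$ by closedness, but $t^* \neq s$, and then examine the configuration of fibers accumulating at $F_{t^*}$: a sequence $F_{t_i}$ with $t_i \uparrow t^*$, each skew to $F_s$, with $F_{t^*}$ parallel to $F_s$.

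The main obstacle is to convert this single ``first contact'' into a genuine contradiction with the *local* hypothesis — parallelism of $F_s$ and $F_{t^*}$ for far-apart parameters is not a priori forbidden. The resolution I would pursue is to bootstrap: show that the set of $s$ for which $T_s \cap (s,1]$ is empty (call such $s$ ``good'') is both open and closed in $[0,1]$ and nonempty, hence all of $[0,1]$, contradicting $0$ being not good. Openness of the good set uses local skewness directly (a small perturbation of $s$ cannot create parallelism with nearby fibers, and Continuity at Infinity controls the far fibers uniformly on compact sets). Closedness is where Continuity at Infinity does the real work: if $s_j \to s_\infty$ are all good but $s_\infty$ is not, there is $t_\infty > s_\infty$ with $F_{s_\infty}$ parallel to $F_{t_\infty}$; approximating by $s_j$ and using that the direction map is continuous and that the ``meets at infinity'' relation is closed, one produces nearby parallel pairs $(s_j, t_j)$ with $t_j$ close to $t_\infty$, contradicting that $s_j$ is good provided we also know $t_\infty \neq s_\infty$ — and $t_\infty = s_\infty$ is excluded by local skewness. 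I expect the delicate point to be making the transition from ``parallel directions'' to ``contained parallel lines'' precise and checking it behaves well under limits, which is exactly the content of the Continuity at Infinity machinery, so this argument should go through once that section's results are invoked with care. Finally, $0$ being good contradicts $1 \in T_0$, completing the proof.
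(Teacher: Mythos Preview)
Your closedness step has the implication reversed. You argue that because the ``meets at infinity'' relation is closed, a parallel pair $(s_\infty, t_\infty)$ forces nearby parallel pairs $(s_j, t_j)$; but closedness says only that \emph{limits} of parallel pairs are parallel. If $s_j \to s_\infty$ are all good and the directions $u_{s_\infty}, u_{t_\infty} \in G_k(n)$ share a line, nothing forces any $u_{s_j}$ to share a line with any $u_t$: the $u_{s_j}$ are generic perturbations of $u_{s_\infty}$ and typically miss everything. So the good set need not be closed and the open--closed argument collapses. (Your openness step is fine, but what it actually uses is the tube lemma together with openness of the skew relation in $G_k(n)\times G_k(n)$, not Continuity at Infinity.)

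The deeper problem is that your argument uses only the path of directions $t \mapsto u_t$ in $G_k(n)$, never the affine positions of the fibers in $\R^n$. That cannot suffice: already for $k=1$ any immersed loop in $S^{n-1}$ gives a locally injective path of line directions with $u_0 = u_1$. The paper's proof uses the ambient geometry in an essential way. Continuity at Infinity is not the statement that the direction map is continuous --- that is automatic from continuity of the fibration --- but rather the cone phenomenon established in its proof: a skewly fibered open neighborhood of a fiber $P$ contains a full solid cone $\Cee_{N,\delta}$ about any line direction $\ell$ lying in $P$. Since every affine line in $\R^n$ with direction $\ell$ eventually enters such a cone, a second fiber $Q$ containing a line parallel to $\ell$ must meet this saturated neighborhood of $P$; then $P$ and $Q$ are fibers through points of a common locally skew patch, contradicting the local hypothesis. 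This geometric capture is the ingredient your approach is missing.
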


In Section \ref{sec:skewsphere} we turn our attention to understanding the precise correspondence between skew and spherical fibrations.  Recall that the central projection takes the open upper hemisphere of a fibered $S^n$ to a skew fibration of the tangent hyperplane $\R^n$ at the north pole; however, the fibers $S^k$ which lie completely on the equator $S^{n-1}$ (parallel to the tangent hyperplane $\R^n$) have no image in $\R^n$.   We will see that this information can still be recovered by the data of the skew fibration ``at infinity'' of $\R^n$. 

Conversely, the inverse central projection of a skew fibration of $\R^n$ covers the open upper hemisphere of $S^n$ by open great hemispheres.   Continuity at Infinity guarantees that completing all hemispheres to great spheres results in a continuous fibration of the open subset of $S^n$ consisting of the open upper and lower hemispheres  as well as the portion of the equator $S^{n-1}$ corresponding to the oriented directions which appear in fibers of the fibration.   We study when such a covering can be extended to a great sphere fibration of all of $S^n$.   As a simple example of when this completion process could fail, we may take a fibration of $\R^6$ by skew copies of $\R^2$.  Positioning $\R^6$ as the tangent hyperplane to $S^6$ and inverse central projecting yields a fibration of some open portion of $S^6$ by great copies of $S^2$ which can never be completed to a spherical fibration.  This phenomenon also exists in dimensions for which both fibrations exist.  In particular, the space of skew fibrations is strictly larger than the space of spherical fibrations.

The methods developed to study the relationship between skew and sphere fibrations also yield the following:

\begin{thm}
\label{thm:tf}
Given a fibration of $\R^n$ by skew oriented affine copies of $\R^k$, there exists a copy of $\R^{k+1}$ transverse to all fibers.
\end{thm}

When applied to the special case $n = 2k+1$, which is possible if and only if $k \in \left\{ 1, 3, 7 \right\}$ (see Section \ref{sec:admissible}), this result states that any skew fibration may be completely described by a map sending a point in some fixed transverse $\R^{k+1}$ to the unique affine $k$-plane through that point.   Combined with Theorems \ref{thm:skewclass} and \ref{thm:nonclass}, this allows for a complete classification of skew and nondegenerate fibrations of $\R^{2k+1}$ by $\R^k$ as those maps $\R^{k+1} \to \Hom(\R^k,\R^{k+1})$ satisfying certain nondegeneracy properties.

Another important consequence of Theorem \ref{thm:tf} is the fact that the base space of a fibration of $\R^{2k+1}$ by skew oriented affine $k$-planes must be homeomorphic to $\R^{k+1}$.  Thus the base spaces of any two such fibrations are homeomorphic, and such fibrations are topologically trivial, so we obtain the following result.

\begin{cor}
\label{cor:quesa}
Given a continuous (resp.\ smooth) fibration of $\R^{2k+1}$ by skew oriented affine copies of $\R^k$, there exists a fiber-preserving homeomorphism (resp.\ diffeomorphism) of $\R^{2k+1}$ taking the fibration to a Hopf fibration.
\end{cor}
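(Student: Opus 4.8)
The plan is to deduce the corollary from Theorem \ref{thm:tf} together with the classification machinery. First I would invoke Theorem \ref{thm:tf} with $n = 2k+1$: this produces a copy of $\R^{k+1}$ inside $\R^{2k+1}$ which is transverse to every fiber $\R^k$ of the given skew fibration $\mathscr{F}$. Since the fibers have dimension $k$ and the transversal has dimension $k+1$, and $k + (k+1) = 2k+1 = \dim \R^{2k+1}$, transversality forces each fiber to meet the transversal $T \cong \R^{k+1}$ in exactly one point. Because the fibers partition $\R^{2k+1}$, this means the restriction of the quotient map $\R^{2k+1} \to \R^{2k+1}/\!\!\sim$ to $T$ is a continuous bijection onto the base, and in fact a homeomorphism: I would check this by observing that $T \hookrightarrow \R^{2k+1} \to \text{base}$ is an open map, using that a skew (hence nondegenerate, hence locally trivial as a fiber bundle by the local bundle structure of Section \ref{sec:fibrations}) fibration is an honest fiber bundle, so the base is a manifold and the section $T$ hits each fiber once. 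Hence the base is homeomorphic to $\R^{k+1}$ (diffeomorphic, in the smooth case).

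Next I would show the total space is a trivial bundle over the base. A fiber bundle with base $\R^{k+1}$ (contractible) is trivial, so $\R^{2k+1} \cong \R^{k+1} \times \R^k$ as a bundle, with the projection to $\R^{k+1}$ being the fibration map; the same holds for any Hopf fibration of $\R^{2k+1}$ by $\R^k$ (its central-projection description also fibers $\R^{2k+1}$ over $\R^{k+1}$). The two bundles are thus abstractly isomorphic. The remaining task is to promote this abstract bundle isomorphism to a fiber-preserving \emph{homeomorphism of the ambient space} $\R^{2k+1}$ covering a homeomorphism of the bases. I would build this directly: trivialize $\mathscr{F}$ via the transversal $T$, obtaining a homeomorphism $\Phi_{\mathscr{F}}\colon \R^{2k+1} \to T \times \R^k \cong \R^{k+1} \times \R^k$ that is fiberwise linear in an appropriate sense (each fiber being an affine $k$-plane, the trivialization can be taken affine on fibers), do the same for the chosen Hopf fibration to get $\Phi_{\text{Hopf}}$, and then compose $\Phi_{\text{Hopf}}^{-1} \circ \Phi_{\mathscr{F}}$, after pre-composing with a homeomorphism of $\R^{k+1}$ matching up the two base identifications. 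In the smooth case every step can be carried out smoothly, using that the local bundle charts from Section \ref{sec:fibrations} are smooth and that smooth fiber bundles over $\R^{k+1}$ admit smooth trivializations.

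The main obstacle is the last step: ensuring the fiber-preserving map constructed on the abstract bundle level actually extends continuously (smoothly) across all of $\R^{2k+1}$, in particular that the affine structure of the fibers is respected so that the resulting map is genuinely a self-map of $\R^{2k+1}$ and not merely of an abstract total space. This requires checking that the trivialization coming from the transversal $T$ varies continuously with the fiber — which is where one must use the full strength of the local vector bundle structure for nondegenerate fibrations established earlier, and the fact (from Theorem \ref{thm:tf} and its proof) that the single-point intersection with $T$ depends continuously, indeed smoothly, on the fiber. Once continuity/smoothness of the trivialization is in hand, the conclusion that $\mathscr{F}$ is carried to a Hopf fibration by a self-homeomorphism (self-diffeomorphism) of $\R^{2k+1}$ is formal.
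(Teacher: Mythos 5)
Your proposal is correct and follows essentially the same route as the paper: Theorem \ref{thm:tf} with $n=2k+1$ gives a transverse $\R^{k+1}$ meeting each fiber in exactly one point, so the base space is homeomorphic (diffeomorphic) to $\R^{k+1}$, the bundle over this contractible base is trivial, and the fibration is therefore equivalent to a Hopf fibration --- the paper records only this outline and leaves the construction of the explicit fiber-preserving map implicit, just as you flesh it out. One small slip worth noting: skew does not imply nondegenerate (the implication runs the other way), but you do not need that claim, since the local trivialization of Section \ref{sec:generalfibrations} holds for any fibration by affine $k$-planes.
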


This result gives a positive answer to Question \ref{ques:main} (a) for skew fibrations of $\R^7$ by $\R^3$ and $\R^{15}$ by $\R^7$.  We hope that the techniques developed in this paper could lead to answers for parts (b) and (c).

We continue studying the relationship between skew and sphere fibrations, and in Theorem \ref{thm:equivalent} we give several equivalent conditions for checking when a skew fibration completes to a sphere fibration.

%\begin{namedthm}[\ref{thm:equivalent}] A fibration of $\R^{2k+1}$ by skew oriented affine copies of $\R^k$ corresponds to a fibration of $S^{2k+1}$ by great spheres $S^k$ if and only if there is a unique linear subspace $\R^{k+1}$ transverse to all fibers.
%\end{namedthm}

Finally,  in Section \ref{sec:applications}, we use the machinery developed above to state and prove a number of results, focusing mostly on line fibrations and great circle fibrations.  We discuss the specific correspondence between these objects and how a great circle fibration can be constructed using a map $\R^{2m} \to \R^{2m}$ whose differential has no real eigenvalues.   The results we state are inspired by both classical and new results in the theory of sphere fibrations.  We see that results for skew fibrations are frequently more explicit and easier than their counterparts for sphere fibrations, and we discuss how the machinery above can be used to pass results about skew fibrations to those for sphere fibrations.  Two example results from Section \ref{sec:applications} are as follows:

\begin{thm}
\label{thm:germ}
Every germ of a smooth nondegenerate fibration of $\R^n$ by $\R^k$ extends to such a fibration of all of $\R^n$.
\end{thm}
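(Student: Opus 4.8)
The plan is to use the classification results of Section~\ref{sec:fibrations} to reduce the statement to a concrete extension problem for the local ``chart'' of the fibration, and then to solve that problem by interpolating the germ with its linear model, exploiting the fact that the nondegeneracy condition is an \emph{open} condition at small scales. Suppose the germ is based at a point $p$, lying on the fiber $F_0$. Choosing linear coordinates $\R^n=\R^k\oplus\R^{n-k}$ with $F_0=\R^k\oplus\{0\}$ and $p=0$, the local classification of nondegenerate fibrations (Theorem~\ref{thm:nonclass}) describes the germ, on a small closed ball $\overline B_{\epsilon_0}\subseteq\{0\}\oplus\R^{n-k}$ of transversals, by a smooth map $A\colon\overline B_{\epsilon_0}\to\Mat_{(n-k)\times k}(\R)$ with $A(0)=0$, the fiber through $q$ being the graph $F_q=\{(y,A(q)y+q):y\in\R^k\}$. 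Nondegeneracy amounts to the open, scale‑invariant pointwise condition $\mathcal R$ on the $1$‑jet of $A$: for every $q$ and every $v\neq 0$, the columns of $(DA(q))v\in\Mat_{(n-k)\times k}(\R)$ together with the vector $v$ are linearly independent in $\R^{n-k}$. (For $k=1$, $n=3$ this reads ``$DA(q)$ has no real eigenvalue.'') Set $L:=DA(0)\in\mathcal R$ and $B(q):=L(q)$; the family $\{(y,L(q)y+q):y\in\R^k\}$, $q\in\R^{n-k}$, is a \emph{global} (linear, Hopf‑type) nondegenerate fibration of $\R^n$, since $\mathcal R$ forces each map $q\mapsto q+(L(q))y_0$ to be a linear isomorphism of $\R^{n-k}$.

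Now comes the main step. Fix a cutoff $\chi\colon\R^{n-k}\to[0,1]$ with $\chi\equiv 1$ on $B_{\epsilon/2}$ and $\chi\equiv 0$ off $B_\epsilon$, and put $\widetilde A:=\chi A+(1-\chi)B$, which is smooth on all of $\R^{n-k}$, equals $A$ near $0$, and equals the linear chart $B$ outside $B_\epsilon$. On the transition annulus $B_\epsilon\setminus B_{\epsilon/2}$ one computes
\[
D\widetilde A(q)-L=\chi(q)\bigl(DA(q)-L\bigr)+\bigl(A(q)-L(q)\bigr)\otimes d\chi_q .
\]
Here $\sup_{|q|\le\epsilon}\|DA(q)-L\|=o(1)$ as $\epsilon\to0$ by continuity of $DA$ at $0$, while $\|A(q)-L(q)\|=O(\epsilon^{2})$ by Taylor's theorem (since $A$ agrees with its linearization $L$ to first order at $0$) and $\|d\chi_q\|=O(\epsilon^{-1})$, so $\sup_q\|D\widetilde A(q)-L\|\to 0$ as $\epsilon\to0$. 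Because $\mathcal R$ is open and $L\in\mathcal R$, for $\epsilon$ small enough we get $D\widetilde A(q)\in\mathcal R$ on the annulus; elsewhere this holds automatically. Thus $\widetilde A$ defines a family of pairwise first‑order skew affine $k$‑planes $\widetilde F_q$, $q\in\R^{n-k}$, on all of $\R^n$, which coincides with the given germ near $p$ and with the global linear fibration $B$ outside a compact set.

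It remains to check that $\{\widetilde F_q\}$ is a genuine nondegenerate fibration of $\R^n$ in the sense of the global classification (Theorem~\ref{thm:globalclass}). By nondegeneracy and Theorem~\ref{thm:lg} the planes $\widetilde F_q$ are pairwise \emph{globally} skew, hence pairwise disjoint. They cover $\R^n$: for each $y_0\in\R^k$ the map $\Psi_{y_0}(q)=q+\widetilde A(q)y_0$ is injective (a point of $\widetilde F_q\cap\widetilde F_{q'}$ with first coordinate $y_0$ would give $\Psi_{y_0}(q)=\Psi_{y_0}(q')$) and a local diffeomorphism ($D\widetilde A(q)\in\mathcal R$ prevents $-1$ from being an eigenvalue of $D\Psi_{y_0}(q)$), and it is proper because $\widetilde A$ is linear outside a compact set; a proper injective local diffeomorphism of $\R^{n-k}$ is a diffeomorphism onto $\R^{n-k}$, so each point of $\R^n$ lies on exactly one $\widetilde F_q$. (Alternatively, $\widetilde A$ agrees with a global linear fibration at infinity, so it satisfies Continuity at Infinity and Theorem~\ref{thm:globalclass} applies directly.) Finally $(y,q)\mapsto(y,\Psi_{y}(q))$ is a smooth diffeomorphism $\R^k\times\R^{n-k}\to\R^n$ carrying $\R^k\times\{q\}$ onto $\widetilde F_q$, so this is a smooth (even smoothly trivial) nondegenerate fibration, and by construction it extends the germ.

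The step I expect to be the crux is the estimate in the interpolation: keeping $D\widetilde A$ inside the nondegeneracy locus $\mathcal R$ on the transition annulus. The delicacy is that the cutoff forces a term of size $O(\epsilon^{-1})$ into the derivative; what rescues the argument is that $A$ is $C^1$‑close to its linearization at scale $\epsilon$, so both error terms vanish as $\epsilon\to0$ and are absorbed by the openness of $\mathcal R$ around $L$. A secondary point requiring care is confirming that the extended family is a fibration in the precise sense used by the classification theorems (partition, plus local triviality), which is why one either invokes the properness argument above or cites the Continuity at Infinity description of $\mathcal R$‑charts in Theorem~\ref{thm:globalclass}.
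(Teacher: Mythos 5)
Your proof is correct and follows essentially the same route as the paper: linearize the chart at the base point to obtain a global nondegenerate fibration via the nonsingular bilinear map construction (Example \ref{ex:bilinear}), then interpolate back to the germ near the origin using openness of the nonsingularity condition and conclude via Theorem \ref{thm:nonclass}. The paper states the perturbation step in one sentence, whereas you supply the cutoff estimate ($o(\epsilon)\cdot O(\epsilon^{-1})\to 0$) and the properness/surjectivity check explicitly; these are exactly the details the paper's argument implicitly relies on.
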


\begin{thm}
\label{thm:contact}
For odd $n \geq 5$, there exists a smooth nondegenerate line fibration of $\R^n$ such that the orthogonal hyperplane distribution is not a contact structure.
\end{thm}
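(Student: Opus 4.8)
The plan is to build the example explicitly by perturbing a Hopf-type line fibration in a controlled way, then to show that the orthogonal hyperplane distribution $\xi$ fails the contact condition $\alpha \wedge (d\alpha)^{(n-1)/2} \neq 0$ at some point. First I would use the classification/construction machinery available for nondegenerate line fibrations (Theorems \ref{thm:skewclass}, \ref{thm:nonclass}, together with Theorem \ref{thm:tf} in the form that gives a transverse $\R^2$) to parametrize line fibrations of $\R^n$ near a fixed reference fibration. In the case $n = 2m+1$ with $m\in\{1,3,7\}$ (or more generally whenever $\R^{2k+1}$ by $\R^k$ applies with $k=1$, i.e. $n=3$), a line fibration is encoded by a map $\R^{2} \to \Hom(\R^1,\R^{2})$ on a transverse plane, or in the general odd-$n$ setting by a germ that Theorem \ref{thm:germ} lets me extend globally; the key point is that I only need a \emph{local} phenomenon, so I can work with a germ at the origin and then invoke Theorem \ref{thm:germ} to globalize.

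The core of the argument is a dimension count / jet computation at a single point. Near a chosen fiber $\ell_0$ through the origin, write the fibration as a map $p \mapsto \ell(p)$ with $\ell(0) = \ell_0 = \Span(e_n)$, and let $v(p)$ be the unit direction field obtained by projecting; the orthogonal hyperplane field is $\xi = v^\perp$, with defining $1$-form $\alpha = \langle v, d(\cdot)\rangle$ after normalizing. The contact condition at the origin is a nonvanishing condition on $\alpha\wedge(d\alpha)^{m}$ at $0$, which depends only on the $1$-jet of $v$ at $0$, i.e. on the linear map $Dv(0)\colon \R^n \to T_{v(0)}S^{n-1} \cong \R^{n-1}$ restricted appropriately. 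I would show: (i) the nondegeneracy (``first-order skew'') hypothesis constrains $Dv(0)$ to lie in an open set $\mathcal{N}$ of admissible $1$-jets (this is essentially the nonsingular-bilinear-map condition from Section \ref{sec:prelim}); and (ii) the subset of $\mathcal{N}$ for which $\alpha\wedge(d\alpha)^m = 0$ at $0$ is nonempty — indeed it is cut out by one polynomial equation in the jet variables, and for $n \geq 5$ (so $m\geq 2$) the jet space has enough dimensions that this equation has solutions inside the open set $\mathcal{N}$. A clean way to produce an explicit solution: take $v$ whose first-order behavior makes $d\alpha$ at $0$ have rank $< 2m$ (for instance, arrange $d\alpha|_0$ to be a $2$-form of rank $2m-2$), which forces $(d\alpha)^m|_0 = 0$; one checks by hand that such a rank-deficient antisymmetric part of $Dv(0)$ is compatible with the skew/nondegeneracy constraints precisely because those constraints are about the \emph{symmetric-in-the-right-sense} or directional-injectivity part of the $1$-jet, not about the antisymmetric part.

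Concretely I would write down the candidate germ in coordinates: fix the transverse plane, let the two ``slope'' functions that describe the affine $1$-plane through $(x,y)\in\R^2$ depend linearly on the remaining coordinates in a way chosen so that (a) distinct fibers stay disjoint to first order — the nondegeneracy condition — and (b) the associated $1$-form has $d\alpha$ degenerate at the origin. For $n=5$ this is a small explicit $3\times 3$-type computation; for general odd $n\geq 7$ one pads with a block that is modeled on a genuine Hopf (contact) piece in the extra coordinates, so only the first factor needs to carry the degeneracy, and the wedge power vanishes because it contains the vanishing factor from that first block. Then Theorem \ref{thm:germ} upgrades this germ to a global smooth nondegenerate line fibration of $\R^n$, and by construction its orthogonal hyperplane distribution is not contact (it fails the contact condition at the one point we engineered).

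I expect the main obstacle to be step (i)–(ii) of the jet analysis: verifying that the degeneracy of $d\alpha$ at the chosen point is genuinely compatible with the nondegeneracy hypothesis on the fibration — i.e. that forcing $(d\alpha)^m|_0=0$ does not secretly force two nearby fibers to meet. This amounts to disentangling which part of the $1$-jet of the direction field $v$ governs skewness (a directional-injectivity / nonsingular-bilinear condition, hence an \emph{open} condition that is insensitive to small antisymmetric perturbations) versus which part governs the contact form (the full antisymmetrization $d\alpha|_0$). Once that separation is made precise using the Section \ref{sec:prelim} material, the existence of the desired example follows by a soft dimension count, and the explicit block construction makes it unconditional; the $n\geq 5$ hypothesis enters exactly because at $n=3$ the jet space is too small to decouple these two conditions (and indeed every nondegenerate line fibration of $\R^3$ does give a contact structure).
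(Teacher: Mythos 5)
Your reduction is the right one, and it is in fact the paper's: a nondegenerate line fibration of $\R^{2m+1}$ with a transverse $\R^{2m}$ is encoded by a map $B \colon \R^{2m} \to \Hom(\R,\R^{2m}) = \R^{2m}$, nondegeneracy is the condition that each $dB_y$ has no real eigenvalues (Corollary \ref{cor:noreal}), and the contact condition for the dual $1$-form at the origin is precisely invertibility of the antisymmetrization $dB_0 - dB_0^{tr}$ on $\ker\alpha_0 = \R^{2m}$. So the entire theorem comes down to one linear-algebra fact: for $m \geq 2$ there exists $M \in \Mat_{2m}(\R)$ with no real eigenvalues such that $M - M^{tr}$ is singular. (Once you have such an $M$ you do not need the germ-extension detour: take $B$ to be the linear map $M$ itself, so $dB_y = M$ everywhere and Example \ref{ex:bilinear} / Theorem \ref{thm:nonclass} already give a \emph{global} smooth nondegenerate fibration.)

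The gap is that you never produce $M$, and the two heuristics offered in its place do not suffice. The ``soft dimension count'' --- an open set and a hypersurface in $\Mat_{2m}(\R)$ must meet --- is not valid reasoning, and the identical count at $m=1$ would ``prove'' the false $n=3$ statement: a $2\times 2$ matrix with no real eigenvalues satisfies $(a-d)^2+4bc<0$, hence $b \neq c$, hence has invertible antisymmetric part. Relatedly, the claim that nondegeneracy is ``insensitive to the antisymmetric part'' of the $1$-jet is false: if $M - M^{tr}=0$ then $M$ is symmetric and has only real eigenvalues, which is exactly why the two conditions refuse to decouple in low dimension. Your block-padding plan also needs care: since no real-eigenvalue-free $2\times 2$ block can have degenerate antisymmetrization, the degeneracy cannot be carried by a single $2\times 2$ factor and must come from off-diagonal coupling between blocks. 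The paper's example (following Gluck--Yang) does exactly this: $M$ has the blocks $\bigl(\begin{smallmatrix} 0 & 1/2 \\ -1/2 & 0 \end{smallmatrix}\bigr)$ down the diagonal and a single $2\times 2$ identity in the top right corner, so $M$ is block upper triangular with eigenvalues $\pm \tfrac{i}{2}$, while $M - M^{tr}$ couples the first and last blocks as $\bigl(\begin{smallmatrix} J & I \\ -I & J \end{smallmatrix}\bigr)$ with $J^2=-I$, which kills the determinant ($\det(J^2+I)=0$). Exhibiting this matrix is the actual content of the proof; the rest of your outline is correct framing around it.
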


For comparison, it is known that the plane distribution orthogonal to a great circle fibration of $S^3$ \cite{Gluck} or to a smooth nondegenerate line fibration of $\R^3$ \cite{HarrisonBLMS} is a tight contact structure. 

In a recent paper of Cahn, Gluck, and Nuchi \cite{CahnGluckNuchi}, Question \ref{ques:main}(c) is labeled ``the best unsolved problem for great circle fibrations of spheres.''   Throughout Sections \ref{sec:applications} and \ref{sec:conclusion} we collect partial results and discuss possible methods for approaching Question \ref{ques:main}(c) for great circle fibrations and for line fibrations.

\section{Preliminaries}
\label{sec:prelim}

Here we discuss several relevant preliminary items.

\subsection{Admissible dimensions for skew fibrations}
\label{sec:admissible}
A fibration of $\R^n$ by oriented affine $k$-planes is called \emph{skew} if no two fibers contain parallel lines.  We have seen that skew fibrations arise via central projection of great sphere fibrations, but there exist skew fibrations which do not correspond via central projection to any great sphere fibration (see \cite{HarrisonBLMS} for an explicit example in $\R^3$).  In fact, fibrations of $\R^n$ by skew affine copies of $\R^k$ exist for many pairs of dimensions which do not admit spherical fibrations. 

In \cite{OvsienkoTabachnikov}, Ovsienko and Tabachnikov studied the question: for which pairs $k$ and $n$ does there exist a fibration of $\R^n$ by pairwise skew, oriented, affine copies of $\R^k$?  They found that a surprising number theoretic condition governs the existence of skew fibrations: a fibration of $\R^n$ by skew copies of $\R^k$ exists if and only if $k \leq \rho(n-k) - 1$, where $\rho$ is the \emph{Hurwitz-Radon} function, defined as follows: decompose $q$ as the product of an odd number and $2^{4a+b}$ for $0 \leq b \leq 3$; then $\rho(q) = 2^b + 8a$.   One important consequence is that a fibration of $\R^{2k+1}$ by skew copies of $\R^k$ exists if and only if $k \in \left\{ 1, 3, 7 \right\}$.  We offer two tables to help convey the existence dimensions more concretely.

\begin{figure}[ht]
\[
\begin{array}{c||c|c|c|c|c|c|c|c|c|c|c|c|c|c|c|c|c|c|c|c|c|c}
n & 3 & 4 & 5 & 6 & 7 & 8 & 9 & 10 & 11 & 12 & 13 & 14 & 15 & 16 & 17 & 18 & 19 & 20 & 21 & 22 & 23 & 24  \\
\hline \hline
k & 1 & & 1 & 2 & 3 & & 1 & 2 & 3 & 4 & 5 & 6 & 7 & & 1 & 2 & 3 & 4 & 5 & 6 & 7 & 8 \\
& & & & & 1 & & & & 1 & & 1 & 2 & 3 & & & & 1 & & 1 & 2 & 3 \\
& & & & & & & & & & & & & 1 & & & & & & & & 1 
\end{array}
\]
\caption{Pairs of dimensions $(k,n)$ for which skew fibrations exist}

\end{figure}

\begin{figure}[ht]
\[
\begin{array}{c||c|c|c|c|c|c|c|c|c|c|c|c|c|c}
k & 1 & 2 & 3 & 4 & 5 & 6 & 7 & 8 & 9 & 10 & 11 & 12 & 13 & 14  \\
\hline \hline
a_k & 2 & 4 & 4 & 8 & 8 & 8 & 8 & 16 & 32 & 64 & 64 & 128 & 128 & 128
\end{array}
\]
\caption{A skew fibration exists for $(k,n)$ if and only if $n = a_km + k$ for some $m \in \N$}
\end{figure}

The Hurwitz-Radon function was independently developed by Hurwitz and Radon a century ago in their studies of square identities (\cite{Hurwitz, Radon}).  Since then, the Hurwitz-Radon function has made prominent appearances in topology.  Most notably, in Adams' development of topological K-theory \cite{Adams}, he showed that the inequality $k \leq \rho(n-k) - 1$ holds if and only if there exist $k$ linearly independent vector fields on $S^{n-k-1}$.  In fact, the following statements are all equivalent:

\begin{compactitem}
\item $k \leq \rho(n-k) - 1$ 
\item there exist $k$ linearly independent vector fields on $S^{n-k-1}$
\item there exists a nonsingular bilinear map (see Section \ref{sec:nonsingular} below) $\R^{n-k} \times \R^{k+1} \to \R^{n-k}$ 
\item there exists a fibration of $\R^n$ by pairwise skew, oriented, affine copies of $\R^k$ 
\item there exists a $(k+1)$-dimensional linear subspace of the space of $(n-k) \times (n-k)$ matrices such that every nonzero matrix is invertible (this formulation, called Property P, was studied by Adams, Lax, and Phillips; see \cite{AdamsLaxPhillips,AdamsLaxPhillips2})
\end{compactitem}

In \cite{OvsienkoTabachnikov2}, Ovsienko and Tabachnikov gave a compelling expository account of the history surrounding the above ideas.  There they include an argument that a skew fibration exists if and only if $k \leq \rho(n-k) - 1$, assuming the first three equivalences above.  It is not difficult to show that a skew fibration induces $k$ linearly independent vector fields on $S^{n-k-1}$, and conversely, skew fibrations can be explicitly constructed using nonsingular bilinear maps (see also Lemma \ref{lem:nonnon} and Example \ref{ex:bilinear}). 

\subsection{Nonsingular bilinear maps}  \label{sec:nonsingular} A bilinear map $B \colon \R^q \times \R^p \to \R^d$ is called \emph{nonsingular} if $B(x,y) = 0$ implies $x=0$ or $y=0$.  In general, the minimum dimension $d$ such that there exists a nonsingular bilinear map $\R^q \times \R^p \to \R^d$ is unknown.   Nonsingular bilinear maps have been studied, in part, due to their relationships to immersions and embeddings of projective spaces \cite{James4}, totally nonparallel immersions \cite{HarrisonAdv}, the coindex of embedding spaces \cite{FrickHarrison}, and coupled embeddability \cite{FrickHarrison2}.  Most notably, they were studied in a series of articles by K.Y.\ Lam (e.g.\  \cite{Lam1, Lam6, Lam4, Lam3, Lam2, Lam5}).

We will see that nonsingular bilinear maps are essential not only for explicit, natural constructions of skew fibrations, but also for the classification of nondegenerate fibrations.

\subsection{The oriented (affine) Grassmann manifolds}
\label{sec:grassmann}

The fibers of a skew fibration are affine oriented $k$-planes in $\R^n$, and so the Grassmann and affine Grassmann manifolds play a large role in the study of skew fibrations.  Here we briefly review important aspects of their topology.

The oriented Grassmann $\grass$ is a manifold whose elements are oriented, linear, $k$-dimensional planes in $\R^n$, endowed with the smooth structure obtained by parametrizing a neighborhood of $u \in \grass$ by a neighborhood of $0 \in \Hom(u,u^\perp)$.   Specifically, the parametrization $\varphi \colon \Hom(u,u^\perp) \to \grass$ associates to a linear map $B$ the $k$-plane obtained as the graph of $B$; for example, $u$ itself is the graph of the zero map. 

The oriented affine Grassmann $\affgrass$ consists of oriented, affine, $k$-dimensional planes in $\R^n$.  An oriented affine $k$-plane $P \in \affgrass$ may be conveniently written as the pair $(u,v)$, where $u$ is the element of $\grass$ obtained by parallel translating $P$ to the origin, and $v$ is the point on $P$ which is nearest to the origin of $\R^n$.  Observe that $v$ necessarily lies on the copy of $\R^{n-k}$ which contains the origin of $\R^n$ and is orthogonal to $P$.   This establishes $\affgrass$ as the total space of the canonical $\R^{n-k}$-bundle $\rho \colon \affgrass \to \grass$, given by the projection $\rho(u,v) = u$.

There is an embedding $\alpha \colon \affgrass \to \grassy$ given as follows: consider $P \in \affgrass$ as an affine $k$-plane in $\R^{n} \times \left\{ 1\right\} \subset \R^{n+1}$, and map it to the $(k+1)$-plane induced by linearizing $P$.  Said differently,  $\alpha(P)$ is the $(k+1)$-plane in $\R^{n+1} = \R^n \times \R$  whose intersection with the hyperplane $\R^n \times \left\{ 1 \right\}$  yields $P$.  The embedding $\alpha$ will be used frequently when studying the correspondence between skew and sphere fibrations, since the inverse central projection of the tangent hyperplane $\R^n$ at the north pole of $S^n \subset \R^{n+1}$ sends an affine $k$-plane $P \subset \R^n$ to the great $k$-sphere $\alpha(P) \cap S^n \subset S^n$.

\begin{definition} Given $u \in \grass$, the \emph{bad cone} $\bee_u \subset \grass$ is the set consisting of $k$-planes $u'$ such that $u$ and $u'$ intersect nontrivially.
\end{definition}

\begin{remark} In the context of this article, the inequality $n > 2k$ always holds, so $\bee_u$ is a proper subset of $\grass$, and its complement is an open submanifold.   In particular $\bee_u$ is stratified according to the dimension of intersection with $u$.
\end{remark}

An important feature of the bad cone is that the tangent space to $\bee_u$ at $u$ is $\bee_u$ itself.   A careful argument is provided in \cite{GluckWarnerYang}; we give an outline here.  Using the parametrization $\varphi$ described above, observe that an element $u' \in \grass$ near $u$ lies in $\bee_u$ if and only if the linear map $\varphi^{-1}(u')$ is not injective.  Now we may identify the tangent space $T_0 \Hom(u,u^\perp)$ with $\Hom(u,u^\perp)$ itself to see that vectors tangent to the bad cone can be identified with elements of the bad cone.

The bad cone plays an essential role in the study of great sphere fibrations.  In particular, Gluck, Warner, and Yang showed that a smooth, closed, connected, $(n-k)$-dimensional submanifold $M$ of $\grassy$ is the base space of a smooth fibration of $S^n$ by great $k$-spheres if and only if $M$ is transverse to the bad cone $\bee_u$ for each $u \in M$ (\cite{GluckWarnerYang}, see also \cite[Proposition 1]{CahnGluckNuchi}).

\begin{remark} Here, and in the remainder of the article, we use the convention that two subspaces are \emph{transverse} at an intersection point $x$ if their tangent spaces at $x$ only intersect trivially.  In particular, there is no requirement that the tangent spaces span the ambient tangent space.
\end{remark}

For frequent future use, it is convenient to formalize the relationship between nonsingular bilinear maps and transversality to the bad cone. 

\begin{lem}
\label{lem:transversenonsingular}
Let $U$ be a smooth $r$-dimensional submanifold of $\grass$, given in some neighborhood of $u \in U$ by a smooth embedding $f \colon \R^r \to \grass$ with $f(0) = u$.   Then $U$ is transverse to $\bee_u$ at $u$ if and only if $d(\varphi^{-1} \circ f)_0 \colon T_0\R^r \to T_0 \Hom(u,u^\perp)$, considered as a map $\R^r \times \R^k \to \R^{n-k}$, is a nonsingular bilinear map. 
\end{lem}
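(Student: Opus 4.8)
\textit{Proof proposal.} The plan is to unwind both sides of the claimed equivalence inside the single chart $\varphi\colon\Hom(u,u^\perp)\to\grass$, using the fact recalled above: under the canonical identification $T_0\Hom(u,u^\perp)\cong\Hom(u,u^\perp)$, the tangent space $T_u\bee_u$ is precisely the set of linear maps $u\to u^\perp$ that fail to be injective (the ``bad cone'' inside $\Hom(u,u^\perp)$). Since $\varphi$ is a diffeomorphism onto a neighborhood of $u$ with $\varphi(0)=u$, its differential $d\varphi_0$ is a linear isomorphism $T_0\Hom(u,u^\perp)\to T_u\grass$ carrying $\varphi^{-1}(\bee_u)$ --- hence this cone of non-injective maps --- onto $T_u\bee_u$. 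On the other hand, since $f$ is an embedding with $f(0)=u$, the tangent space $T_uU$ equals $\mathrm{image}(df_0)$, and by the chain rule $(d\varphi_0)^{-1}(T_uU)=\mathrm{image}\bigl(d(\varphi^{-1}\circ f)_0\bigr)$. Writing $L:=d(\varphi^{-1}\circ f)_0\colon\R^r\to\Hom(u,u^\perp)$ and using that a linear isomorphism preserves trivial intersections, transversality of $U$ to $\bee_u$ at $u$ --- in the sense of the transversality convention fixed above --- is equivalent to
\[
\mathrm{image}(L)\cap\{\,B\in\Hom(u,u^\perp)\ :\ B\ \text{not injective}\,\}=\{0\}.
\]

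Next I would reformulate this condition fiberwise. Because $f$ is an embedding, $L$ is injective and $\mathrm{image}(L)$ is an honest $r$-dimensional subspace; the displayed intersection is then trivial exactly when $L(w)$ is injective for every nonzero $w\in\R^r$. Finally I would pass to the associated bilinear map: under the identifications $u\cong\R^k$ and $u^\perp\cong\R^{n-k}$, the map $\beta(w,x):=L(w)(x)$ is exactly $d(\varphi^{-1}\circ f)_0$ viewed as a map $\R^r\times\R^k\to\R^{n-k}$, and injectivity of $L(w)$ for all $w\neq 0$ reads precisely as ``$\beta(w,x)=0$ implies $w=0$ or $x=0$.'' This is the definition of nonsingularity of $\beta$, so chaining the equivalences proves the lemma.

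The only genuinely non-formal ingredient is the identification of the tangent space to $\bee_u$ at $u$ with $\bee_u$ itself, which was recorded above following \cite{GluckWarnerYang} and may be cited directly. I expect the remaining obstacle, such as it is, to be purely bookkeeping: one must keep the three identifications --- the chart $\varphi$, the canonical $T_0\Hom(u,u^\perp)\cong\Hom(u,u^\perp)$, and $\Hom(u,u^\perp)\cong\Hom(\R^k,\R^{n-k})$ --- mutually compatible, so that ``trivial intersection of tangent objects'' transports correctly to ``nonsingular bilinear map.'' Once this dictionary is in place, every remaining step is a routine unwinding of definitions.
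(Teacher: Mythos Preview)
Your proposal is correct and follows essentially the same route as the paper's own proof: transport the transversality condition through the chart $\varphi$, use the identification of $T_u\bee_u$ with the cone of non-injective maps in $\Hom(u,u^\perp)$, and observe that the image of $L=d(\varphi^{-1}\circ f)_0$ meeting this cone only at $0$ is exactly nonsingularity of the associated bilinear map. The paper's argument is more terse, but the logical skeleton is identical; your version simply unpacks the three identifications more explicitly.
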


\begin{proof} Due to the identification of tangent spaces discussed above, transversality of $U$ to $\bee_u$ at $u$ is equivalent to transversality of $\varphi^{-1}(U)$ to the variety of noninjective linear maps in $\Hom(u,u^\perp)$ at $0$.  This occurs if and only if the image of $d(\varphi^{-1} \circ f)_0$ appears as an $r$-dimensional linear subspace of $\Hom(u,u^\perp)$ transverse to the variety of noninjective linear maps; that is, the nonzero linear maps in the image all have full rank.  This is equivalent to nonsingularity of the corresponding map  $\R^r \times \R^k \to \R^{n-k}$. 
\end{proof}

A version of this correspondence was studied for great sphere fibrations in certain special cases, but never in the language of nonsingular bilinear maps (the objects are the same, but the terminology is different).  Gluck, Warner, and Yang \cite{GluckWarnerYang} studied great sphere fibrations of $S^{2k+1}$ by $S^k$ and their correspondence to regular algebras $\R^{k+1} \times \R^{k+1} \to \R^{k+1}$ (the Hopf fibrations correspond to the division algebras $\C, \Ham,$ and $\Oct$).  On the other hand, great circle fibrations of $S^{2m+1}$ correspond to linear maps $\R^{2m} \to \R^{2m}$ with no real eigenvalues; see \cite{McKay}, \cite{CahnGluckNuchi}.  Linear maps with no real eigenvalues will also make an appearance in the study of skew line fibrations; in particular see Corollary \ref{cor:noreal} and Section \ref{sec:gcnon}.

We conclude this section with the following observation, recalling the embedding $\alpha \colon \affgrass \to \grassy$ described above.

\begin{lem}
\label{lem:affinebadcone}
Two affine $k$-planes $P, Q \subset \R^n$ are skew if and only if $\alpha(Q) \notin \bee_{\alpha(P)}$.
\end{lem}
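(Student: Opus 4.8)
The plan is to unwind both sides of the claimed equivalence in terms of elementary Euclidean geometry and then match them up. Recall that $\alpha$ sends an affine $k$-plane $P \subset \R^n = \R^n \times \{1\} \subset \R^{n+1}$ to the $(k+1)$-plane $\widehat P := \alpha(P)$ spanned by $P$ (viewed in height $1$) together with the origin of $\R^{n+1}$; equivalently, $\widehat P$ is the cone over $P$. Writing $P = \{v + w : w \in u\}$ with $u \in \grass$ the linear direction of $P$, we have $\widehat P = \Span\{(v,1)\} + (u \times \{0\})$, an honest $(k+1)$-dimensional linear subspace since $(v,1) \notin \R^n \times \{0\}$. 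Similarly $\widehat Q = \Span\{(v',1)\} + (u' \times \{0\})$ for $Q = \{v' + w' : w' \in u'\}$ with direction $u' \in \grass$.

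First I would record what ``$\widehat Q \in \bee_{\widehat P}$'' means: by definition this is the statement that $\widehat P \cap \widehat Q \neq \{0\}$, i.e.\ there is a nonzero vector $(x, t) \in \R^n \times \R$ lying in both. I split into two cases according to whether $t = 0$ or $t \neq 0$. If $t \neq 0$, we may rescale so $t = 1$; then $(x,1) \in \widehat P$ forces $x \in P$ and likewise $x \in Q$, so $P$ and $Q$ meet in the affine sense. If $t = 0$, then $(x,0)$ is a nonzero element of $(u \times \{0\}) \cap (u' \times \{0\})$ (using that the height-$1$ generators $(v,1), (v',1)$ contribute nothing at height $0$), so $u \cap u' \neq \{0\}$, which says $P$ and $Q$ contain parallel lines. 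Conversely, an affine intersection point $x \in P \cap Q$ gives $(x,1) \in \widehat P \cap \widehat Q \setminus \{0\}$, and a common direction vector $0 \neq w \in u \cap u'$ gives $(w,0) \in \widehat P \cap \widehat Q \setminus \{0\}$. Hence $\widehat Q \in \bee_{\widehat P}$ if and only if $P$ and $Q$ either meet or have a common direction — which is precisely the negation of $P$ and $Q$ being skew (two affine $k$-planes are skew iff they are disjoint and no line in one is parallel to a line in the other). This gives the claimed equivalence.

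The only genuine point requiring care — and the step I would flag as the main obstacle — is the case analysis for $t = 0$: one must verify that a nonzero vector of $\widehat P$ at height $0$ lies in $u \times \{0\}$, rather than involving the generator $(v,1)$. This is immediate here because any element of $\widehat P$ has the form $s(v,1) + (w,0)$ with $s \in \R$, $w \in u$, and its height is $s$; so height $0$ forces $s = 0$. I would state this cleanly rather than belabor it. Everything else is bookkeeping: translating ``skew'' into the conjunction ``$P \cap Q = \varnothing$ and $u \cap u' = \{0\}$'' (which is just the definition from Section~\ref{sec:admissible}, specialized to two fibers), and observing that $\widehat Q \notin \bee_{\widehat P}$ negates the disjunction of the two failure modes. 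No transversality or smoothness input is needed — this is a purely linear-algebraic statement about the two planes — so the proof is short.
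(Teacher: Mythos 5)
Your proof is correct and follows essentially the same route as the paper's: both arguments identify a nontrivial intersection of $\alpha(P)$ and $\alpha(Q)$ at height $t\neq 0$ with an affine intersection point of $P$ and $Q$, and at height $t=0$ with a common parallel direction. The paper states this in two sentences; your version just fills in the (routine) verification that a height-$0$ vector of $\alpha(P)$ must lie in $u\times\{0\}$.
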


\begin{proof} The affine planes $P$ and $Q$ intersect at $p \in \R^n$ if and only if $\alpha(P)$ and $\alpha(Q)$ both contain the line spanned by $(p,1)$.  The affine planes $P$ and $Q$ contain the parallel line spanned by vector $w \in \R^n$ if and only if $\alpha(P)$ and $\alpha(Q)$ both contain the line spanned by $(w,0)$.
\end{proof}

\section{Topology and geometry of fibrations of $\R^n$}
\label{sec:fibrations}

Here we introduce and discuss our main objects of study: \emph{skew} and \emph{nondegenerate} fibrations of $\R^n$.   All fibrations here are vector bundles, whose fibers are affine, oriented $k$-planes in $\R^n$.  Viewing these objects from a local perspective, we discuss the vector bundle structure, and we establish some basic topological and geometric properties of skew/nondegenerate fibrations.  We introduce notation which will be used throughout the article.

%Intuitively we think of such an object as a continuous or smooth covering of $\R^n$ by pairwise skew, affine copies of $\R^p$.  In this section we see how this intuitive idea translates to the language of fibrations and vector bundles. 
%We describe these objects from both a local and global perspective, in both the topological and smooth categories.
%The local perspective allows us to work explicitly.

\subsection{Fibrations of $\R^n$: the local structure}

\label{sec:generalfibrations}

Consider a fibration of $\R^n$ by affine, oriented $k$-planes.  At the moment we do not assume that the fibers are pairwise skew, so the contents of this subsection apply equally well to the skew line fibration of Figure \ref{fig:skewlines} or to a fibration of $\R^n$ by parallel $k$-planes.

Each fiber $P$ is an element of the oriented affine Grassmann manifold $\affgrass$, and hence corresponds uniquely to a pair $(u,v)$ as follows: $v$ represents the point on $P$ nearest to the origin of $\R^n$, and $u$ represents the point in the oriented (linear) Grassmann manifold $\grass$ obtained by parallel translating $P$ from $v$ to the origin.  The fibration of $\R^n$ is the map which assigns to each $x \in \R^n$ the unique fiber $(u,v)$ through $x$.   We denote this $k$-plane bundle by $p \colon x \mapsto v$.

We can view the product structure of this vector bundle by looking at a neighborhood of a fiber $P = (u,v)$.  Consider the copy of $\R^q$, $q=n-k$, through the origin of $\R^n$ and orthogonal to $P$, so that $v = P \cap \R^q$.  Every fiber near $P$ is the graph of an affine map $P = \R^k \to \R^q \colon t \mapsto B(y)t + y$, where $y$ is the coordinate in $\R^q$ and $B(y) : \R^k \to \R^q$ is a linear map defined for $y$ sufficiently close to $v$.

Said differently, there is a map $B$ defined in a neighborhood $E \subset \R^q$ of $v$ and taking values in $\Hom(\R^k,\R^q)$, such that for a fixed $y$ in the neighborhood, the graph of the map $t \mapsto B(y)t + y$ is precisely the fiber through $y$.  By construction, $B(v) = 0$.  The induced map $E \times \R^k \to \R^n \colon (y,t) \mapsto (t,B(y)t + y)$ is a local trivialization of the vector bundle $p$, so $B$ is as smooth as the fibration.

%\begin{figure}[h]
%\centerline{
%\includegraphics[width=4in]{localfib.pdf}
%}
%\caption{Local depiction of a fibration}
%\label{fig:graph}
%\end{figure}

The restriction $p \big|_E \colon E \to p(\R^n)$ maps $y \in E$ to the point on the fiber through $y$ which is nearest to the origin.   This is a continuous injective map from the $q$-dimensional open set $E \subset \R^q$ to $p(\R^n)$, and hence is a homeomorphism onto its image $p(E)$.  This perspective is useful in that it allows us to visualize the base space $p(\R^n)$ of the fibration embedded as a $q$-dimensional submanifold of $\R^n$, consisting of all points in $\R^n$ which occur as ``nearest point to the origin'' for the fibration.

\begin{lem}
\label{lem:opencontractible}
The base space $p(\R^n)$ of a fibration of $\R^n$ by affine, oriented $k$-planes is a connected, contractible, $q$-dimensional submanifold of $\R^n$.
\end{lem}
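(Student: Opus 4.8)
The plan is to exhibit $p(\R^n)$ as a submanifold, then establish connectedness and contractibility by a deformation argument. The manifold structure is essentially already in hand: the discussion preceding the lemma shows that for each fiber $P=(u,v)$ there is a neighborhood $E\subset\R^q$ of $v$ in the orthogonal copy of $\R^q$, and that $p\big|_E\colon E\to p(\R^n)$ is a continuous injection, hence a homeomorphism onto its open image $p(E)$. So first I would remark that the sets $p(E)$, as $P$ ranges over all fibers, form an atlas exhibiting $p(\R^n)$ as a topological (resp.\ smooth) $q$-dimensional manifold; the transition maps are as smooth as the fibration because each local trivialization $(y,t)\mapsto(t,B(y)t+y)$ is. One should note $p(\R^n)$ is a submanifold of $\R^n$ in the embedded sense, since near each point it is the graph of the $\R^k$-component of a section, cut out cleanly by the trivialization.

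For connectedness, the key observation is that $p\colon\R^n\to p(\R^n)$ is a continuous surjection (each $x\in\R^n$ lies on exactly one fiber, with a well-defined nearest point to the origin), so $p(\R^n)$ is the continuous image of the connected space $\R^n$ and is therefore connected. For contractibility I would set up an explicit deformation retraction of $\R^n$ onto the single point $0$ that is ``fiberwise linear'' enough to descend. Concretely, consider the straight-line homotopy $H_s(x)=sx$ for $s\in[0,1]$; this contracts $\R^n$ to the origin. The subtlety is that $H_s$ does not send fibers to fibers, so it does not obviously induce a map on the base. To fix this, I would instead define the homotopy directly on $p(\R^n)$: given a nearest point $v=p(x)$ lying on fiber $(u,v)$, I would like to flow $v$ along the base. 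The cleanest route is to compose: use $p\circ H_s$, i.e.\ send $v\mapsto p(sv)$, the nearest point of the fiber through $sv$. This is continuous in $(s,v)$, equals $v$ at $s=1$, and at $s=0$ sends everything to $p(0)$, the nearest point of the fiber through the origin. After translating so that this fiber passes through $0$ (or just retracting further within that single affine $k$-plane's nearest point, which is a point), we get a contraction of $p(\R^n)$ to a point.

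The main obstacle I anticipate is verifying that $v\mapsto p(sv)$ is genuinely continuous and well-defined as a map $[0,1]\times p(\R^n)\to p(\R^n)$ — this rests on continuity of the fibration map $p\colon\R^n\to\R^n$, $x\mapsto$ (nearest point of its fiber to the origin), which is exactly the content of $p$ being a continuous vector bundle projection composed with the zero section, and on the fact that the ``nearest point'' operation is continuous (it is, since $(u,v)\in\affgrass$ depends continuously on $x$ and $v$ is recovered continuously from the pair). I would also double-check the endpoint: $p(0)$ is a single point of $p(\R^n)$, so no further adjustment is actually needed — the homotopy $G_s(v)=p(sv)$ already realizes $p(\R^n)\simeq\{p(0)\}$, provided one confirms $G_s$ takes values in $p(\R^n)$ (immediate, since $p(sv)\in p(\R^n)$ by definition) and that $G_1=\mathrm{id}$ (immediate, since $v$ is already the nearest point of its own fiber). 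The only genuine work is the continuity claim, which follows from the local trivializations described above: on each $p(E)$ the map $p$ is, in the $E$-coordinate, literally the identity, and continuity is a local statement.
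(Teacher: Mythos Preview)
Your argument is correct, and for contractibility it takes a genuinely different route from the paper. The paper invokes the long exact sequence of homotopy groups for the fibration $\R^k \to \R^n \xrightarrow{p} p(\R^n)$: since both fiber and total space have trivial homotopy groups, so does $p(\R^n)$, and then Whitehead's theorem (using that $p(\R^n)$ is a manifold, hence has the homotopy type of a CW complex) yields that $p$ is a homotopy equivalence. Your approach instead writes down the explicit null-homotopy $G_s(v)=p(sv)$, which is more elementary: it avoids both the long exact sequence and Whitehead, and it makes transparent exactly how the contractibility of $\R^n$ pushes down to the base. The paper's route is shorter to state and is the ``off-the-shelf'' argument for any fibration with contractible fiber and total space; your route is self-contained and gives a concrete contraction, at the cost of needing to verify that $p\colon\R^n\to p(\R^n)$ is continuous (which, as you note, is just the statement that the fibration map $x\mapsto(u,v)\in\affgrass$ is continuous, followed by the second-coordinate projection). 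One minor remark: your final sentence about ``on each $p(E)$ the map $p$ is, in the $E$-coordinate, literally the identity'' is not quite the relevant point, since you need continuity of $p$ on all of $\R^n$ (where $sv$ lives), not just on $p(\R^n)$; but you already gave the correct justification in the sentence before, so the argument stands.
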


\begin{proof} We have established that $p(\R^n)$ is a $q$-dimensional submanifold, which must be connected, as the image of $p$.   Now we have a long exact sequence of homotopy groups:
\[\cdots \rightarrow \pi_{n+1}(\R^n) \xrightarrow{p_*} \pi_{n+1}(p(\R^n)) \rightarrow \pi_n(\R^k) \rightarrow \cdots \rightarrow \pi_0(\R^k) \rightarrow \pi_0(\R^n) \rightarrow 0.\]
It follows that $p$ induces isomorphisms $\pi_m(\R^n) \approx \pi_m(p(\R^n))$ for all $m>0$, so by Whitehead's Theorem, $p$ is a homotopy equivalence and $p(\R^n)$ is contractible.  \end{proof}

\subsection{Skew fibrations of $\R^n$}

A fibration of $\R^n$ by affine, oriented $k$-planes is called \emph{skew} if no two fibers contain parallel lines.   For a skew fibration, the map $\pi \colon \R^n \to \grass$, which sends $x \in \R^n$ to the unique linear oriented $k$-plane through $x$, is injective.  This leads to a more useful manifestation of the base space as the subset $U \subset \grass$ consisting of $k$-planes which appear as fibers of the fibration.

Let $\xi_U$ be the total space of the restriction of the canonical affine Grassmann bundle (see Section \ref{sec:grassmann}) to $U \subset \grass$.   A skew fibration $\pi \colon \R^n \to U$ contains the data of a section $v \colon U \to \xi_U \colon u \mapsto (u,v(u))$.  Let $M \coloneqq v(U) \subset \affgrass$ be the collection of oriented affine planes which appear in the fibration.  As seen in the general case (for fibrations which are not necessarily skew), the section $v$ determines an embedding of $M$ in $\R^n$, as the set of points which occur as ``nearest point to the origin.''  Then $v : U \to v(U) \subset \R^n$ is a continuous injective map to a $q$-dimensional manifold, and therefore gives a homeomorpshism $U \to v(U)$.  This allows us to think of either $U$ or $v(U)$ as the base space of a skew fibration and interchange these perspectives.  Note that Lemma \ref{lem:opencontractible} establishes $U$ as a connected, contractible, $q$-dimensional submanifold of $\grass$.

\begin{lem}
\label{lem:closed}
If $M \subset \affgrass$ corresponds to a skew fibration, then $M$ is topologically closed in $\affgrass$.  Moreover, if $(u_n,v_n) \in M$ is a sequence with no accumulation point in $M$, $|v_n| \to \infty$.
\end{lem}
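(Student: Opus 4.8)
The plan is to derive both assertions from a single fact — that the fibration, viewed as a map $\Phi\colon\R^n\to\affgrass$ sending a point to the fiber through it, is continuous — together with the compactness of $\grass$. Continuity of $\Phi$ follows from the local vector bundle structure of Section \ref{sec:generalfibrations}: in the trivialization $(y,t)\mapsto(t,B(y)t+y)$ on $E\times\R^k$, the fiber through a point is $(u(y),y)$, where $u(y)$ is the graph of $B(y)$, and both $y$ and $B$ depend continuously on the point. The observation that makes everything work is that if $(u,v)\in M$, then the foot-point $v$ (a point of $\R^n$) lies on the affine plane $(u,v)$, so $\Phi(v)=(u,v)$; that is, $\Phi$ restricts to the identity on the embedded base $v(U)\subset\R^n$, and $M=\Phi\bigl(v(U)\bigr)$.

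For closedness, suppose $(u_n,v_n)\in M$ converges in $\affgrass$ to a point $(u_\infty,v_\infty)$. Convergence in the total space $\affgrass$ amounts to $u_n\to u_\infty$ in $\grass$ together with convergence of the foot-points $v_n\to v_\infty$ as points of $\R^n$. Since $\Phi(v_n)=(u_n,v_n)$ by the previous paragraph, continuity of $\Phi$ gives $\Phi(v_\infty)=\lim_n\Phi(v_n)=(u_\infty,v_\infty)$; hence $(u_\infty,v_\infty)$ is the fiber of the fibration through $v_\infty$, and therefore lies in $M$.

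For the ``moreover'' clause I argue by contrapositive. Suppose $|v_n|$ does not tend to infinity; then along a subsequence $|v_n|$ stays bounded, so the $v_n$ lie in a fixed closed ball of $\R^n$, and, since $\grass$ is compact, we may pass to a further subsequence with $u_n\to u_\infty$ in $\grass$ and $v_n\to v_\infty$ in $\R^n$. Continuity of orthogonal projection onto the moving complement $u_n^\perp$ forces $v_\infty\perp u_\infty$, so $(u_\infty,v_\infty)$ is a genuine element of $\affgrass$ and is an accumulation point of the original sequence; by the closedness just established it lies in $M$, so the sequence does have an accumulation point in $M$.

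The individual steps are short; the part demanding care is the topological bookkeeping for $\affgrass$ — confirming that its manifold topology is exactly ``$u_n\to u_\infty$ in $\grass$ and $v_n\to v_\infty$ in $\R^n$'', which is cleanest to check through the embedding $\alpha\colon\affgrass\hookrightarrow\grassy$ into the compact Grassmannian, since both the linear part $u$ and the foot-point $v$ are recovered continuously from $\alpha(u,v)$ — and making sure $\Phi$ is continuous on all of $\R^n$ rather than merely fiberwise. I would remark that skewness plays no role in this lemma: it is used in the surrounding discussion only to identify the base with a subset $U\subset\grass$, whereas $M\subset\affgrass$ is defined, and closed, for an arbitrary fibration of $\R^n$ by affine oriented $k$-planes.
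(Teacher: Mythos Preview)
Your proof is correct and follows essentially the same approach as the paper: both arguments use that the foot-point $v_n$ converges in $\R^n$, then invoke continuity of the fibration map to conclude that the limit lies in $M$, and both handle the second assertion by contrapositive plus compactness. The only cosmetic difference is that the paper phrases things via the direction map $\pi\colon\R^n\to\grass$ rather than your full fiber map $\Phi\colon\R^n\to\affgrass$, and in the second part simply observes that a bounded subsequence lies in a compact subset of $\affgrass$ rather than extracting convergent subsequences of $u_n$ and $v_n$ separately; your added remarks on the topology of $\affgrass$ and on skewness being inessential are correct and make explicit what the paper leaves implicit.
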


\begin{proof}  Let $P_n = (u_n,v_n) \in M$ be a sequence converging to a point $P = (u',v') \in \affgrass$.  Then $v_n = v(u_n) \in \R^n$ is the point nearest to the origin on $P_n$.  Now $v_n \to v'$, so continuity of $\pi$ implies the convergence $u_n = \pi(v_n) \to \pi(v')$.  Thus $u' = \pi(v')$.  Geometrically, this means that the plane through $v'$ is (the translation of) $u'$, so $(u',v')$ is contained in $M$, hence $M$ is closed.

Consider again a sequence $(u_n,v_n)$ in $M$.  We prove the contrapositive of the latter statement.  If the sequence of distances $|v_n|$ does not approach $\infty$, then there is a bounded subsequence, and hence a convergent subsequence $|v_{n_j}|$.  Thus $(u_{n_j},v_{n_j})$ is contained in a compact subset of $\affgrass$, and so it has an accumulation point, which is contained in $M$ by closure.  \end{proof}

%\begin{example}
%\label{ex:hopfvec} As depicted in Figure \ref{fig:skewlines}, the central projection of the standard Hopf fibration of $S^3$ by oriented great circles gives a fibration of $\R^3$ by skew oriented lines.  In this case, the subset $U \subset S^2$ is the open upper hemisphere and $AG_1(3) \simeq TS^2$, so that the section $v$ is a tangent vector field on $U$.  Explicitly, $v$ sends a point $(\sqrt{u_1^2+u_2^2+1})^{-1} \cdot (u_1,u_2,1) \in U$ to the tangent vector $(-u_2,u_1,0)$.  %This example may be manipulated to see the necessity of the second bullet point of Theorem \ref{thm:contfibvectorfield}. Restricting the domain of $v$ to a smaller spherical cap corresponds to removing some lines from the fibration, hence the resulting collection of lines are skew but do not cover $\R^3$.
%\end{example}

Consider a fibration of $\R^n$ by oriented, affine $k$-planes; at the moment we do not assume that the fibers are skew.  Let $z \in \R^n$, and let $P = (u,v)$ be the affine fiber through $z$.  As discussed in Section \ref{sec:generalfibrations}, there is a map $B$ defined in a neighborhood $E$ of $z$ in $u^\perp$ and taking values in $\Hom(u,u^\perp) = \Hom(\R^k,\R^q)$, such that for a fixed $y$ in the neighborhood, the graph of the map $t \mapsto y + B(y)t$ is precisely the fiber through $y$.  

Given a map $B \colon E \to \Hom(\R^k, \R^q)$ let $A \colon E \to \Hom(\R^{k+1}, \R^q)$ be the map defined by $A(y)(t,\lambda) = B(y)t + \lambda y$.   In coordinates, if $B(y)$ is represented by a $q \times k$ matrix, the matrix for $A(y)$ is obtained by appending the column vector $y$ to $B(y)$.

\begin{lem}
\label{lem:toplocal}
Suppose that $A \colon E \to \Hom(\R^{k+1}, \R^q)$ corresponds to a fibration of (some open subset of) $\R^n$ in the manner described above.  Then the fibration is skew if and only if for every distinct $x,y \in E$, $\Ker(A(x) - A(y)) = 0$.
\end{lem}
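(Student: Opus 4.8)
The plan is to unwind both sides into statements about the affine planes $P_x$ and $P_y$ through $x$ and $y$, and to observe that the vanishing of $\Ker(A(x)-A(y))$ is exactly the condition that these two affine planes contain no parallel lines. Recall that for $y \in E$ the fiber through $y$ is the graph of $t \mapsto y + B(y)t$, i.e. the affine $k$-plane
\[
P_y = \{(t, y + B(y)t) : t \in \R^k\}
\]
in the splitting $\R^n = u \oplus u^\perp$, where $u$ is the linear plane parallel to the fiber through the base point $z$. First I would make the trivial remark that skewness of the whole fibration is equivalent to skewness of every pair of fibers, and since distinct points of $E$ lie on distinct fibers (and every fiber meeting the neighborhood meets $E$), it suffices to check the skew condition for the pairs $(P_x, P_y)$ with $x \neq y$ in $E$.

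Next I would translate ``$P_x$ and $P_y$ contain parallel lines'' into an algebraic condition. A line in $P_x$ has direction vector $(t, B(x)t)$ for some $t \neq 0$; a line in $P_y$ has direction $(s, B(y)s)$ for some $s \neq 0$. These are parallel iff they are scalar multiples, and since the first ($u$-)coordinates are $t$ and $s$, after rescaling we may take $s = t \neq 0$; the $u^\perp$-coordinates then agree iff $B(x)t = B(y)t$, i.e. $t \in \Ker(B(x) - B(y))$. So $P_x$ and $P_y$ contain parallel lines iff $\Ker(B(x)-B(y)) \neq 0$. It remains to connect $\Ker(B(x)-B(y))$ with $\Ker(A(x)-A(y))$. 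By definition $A(y)(t,\lambda) = B(y)t + \lambda y$, so
\[
\bigl(A(x) - A(y)\bigr)(t,\lambda) = \bigl(B(x) - B(y)\bigr)t + \lambda (x - y).
\]
Since $x \neq y$ are both points of the base, and $x - y \in u^\perp$ is the difference of the two nearest-point vectors, I need the key geometric input: $(x-y)$ is \emph{not} in the image of $B(x) - B(y)$ — more precisely, if $\bigl(B(x)-B(y)\bigr)t = -\lambda(x-y)$ with $(t,\lambda) \neq 0$, then $\lambda = 0$ forces $t \in \Ker(B(x)-B(y))$, while $\lambda \neq 0$ would say $x$ and $y$ lie on intersecting affine planes, forcing $P_x = P_y$ (as fibers of a fibration through a common point), whence $x = y$ as nearest points, a contradiction. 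This shows $\Ker(A(x)-A(y)) \neq 0 \iff \Ker(B(x)-B(y)) \neq 0$, and combining with the previous paragraph gives the claim.

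The main obstacle is making the ``$\lambda \neq 0$'' case airtight: I must argue that $\bigl(B(x)-B(y)\bigr)t = -\lambda(x-y)$ with $\lambda \neq 0$ genuinely forces $P_x \cap P_y \neq \emptyset$, not merely that the two planes meet ``at infinity.'' Concretely, one rescales to $\lambda = -1$ and checks that the point $(t, x + B(x)t) = (t, y + B(y)t)$ lies on both $P_x$ and $P_y$; here $x + B(x)t = y + B(y)t$ is exactly the relation $B(x)t - B(y)t = y - x = -(x-y)$. Because the $P$'s are fibers of a \emph{fibration}, two fibers sharing a point coincide, so $P_x = P_y$; then their nearest-point-to-origin vectors coincide, i.e. $x = y$, contradicting $x \neq y$. (One small point to be careful about: I am using that $x, y$ are the nearest points of their own fibers, which is how the base-space embedding in Section~\ref{sec:generalfibrations} was set up, so the map $y \mapsto P_y$ is genuinely injective on $E$.) With that subtlety handled, the rest is the routine linear-algebra bookkeeping sketched above.
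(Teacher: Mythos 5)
Your proof is correct and follows essentially the same route as the paper's: decompose a kernel element $(t,\lambda)$ of $A(x)-A(y)$ into the $\lambda=0$ case (which detects parallel lines via $\Ker(B(x)-B(y))$) and the $\lambda\neq 0$ case (which detects an intersection point of $P_x$ and $P_y$, impossible for distinct fibers of a fibration). The only blemishes are cosmetic: the rescaling should be to $\lambda=1$ rather than $\lambda=-1$ (your displayed relation $B(x)t-B(y)t=y-x$ is the correct one), and injectivity of $y\mapsto P_y$ on $E$ follows from each fiber being a graph over $u$ meeting $u^\perp$ in the single point $(0,y)$, not from $x,y$ being nearest points to the origin.
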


\begin{proof}
A nonzero vector $(t,0) \in \R^k \times \R$ is in $\Ker(A(x)-A(y))$ if and only if $B(x)t = B(y)t$, which occurs if and only if the fibers through $x$ and $y$ contain a parallel line.  Given nonzero $t$, $(t,1)$ is in $\Ker(A(x)-A(y))$ if and only if $x-y$ is in the linear span of $B(x) - B(y)$, which occurs if and only if $x$ and $y$ intersect.
\end{proof}

\begin{remark} The geometric interpretation of the statement $\Ker(A(x) - A(y)) = 0$ can be understood in the context of Section \ref{sec:grassmann}, since it holds if and only if the $k$-planes $P_x$ and $P_y$ through $x$ and $y$ are skew, which occurs if and only if the $(k+1)$-plane $\alpha(P_y)$ does not lie in the bad cone $\bee_{\alpha(P_x)} \subset \grassy$.  
\end{remark}

\subsection{Nondegenerate fibrations of $\R^n$}

\label{sec:nondegenerate}

We now turn our attention to smooth fibrations of $\R^n$.  Our first task is to develop a first-order notion of skewness.

A smooth fibration of odd-dimensional space $\R^n$ by oriented lines may be thought of as a unit vector field $V$ on $\R^n$, for which all of the integral curves are lines.   Equivalently, $\nabla V$ vanishes in the direction of $V$: $\nabla_V V \equiv 0$.  Following Salvai \cite{Salvai}, we say that such a fibration is \emph{nondegenerate} if $\nabla V$ vanishes \emph{only} in the direction of $V$.   The complete opposite situation,  in which $\nabla V$ is identically $0$, corresponds to a fibration of $\R^n$ by parallel lines. 

Any nondegenerate fibration is locally skew.  Indeed, let $z \in \R^n$, and let $P$ be the fiber containing $z$.  The nondegeneracy condition is equivalent to the statement that $V \big|_{P^\perp} \colon P^{\perp} \to S^{n-1}$ is an immersion at $z$.  Therefore $V \big|_{P^\perp}$ is locally injective, so no two fibers near $P$ are parallel. 

To extend the notion of nondegeneracy to fibrations with higher-dimensional fibers, we consider a smooth map $\pi : \R^n \to \grass$, for which all $k$-dimensional integral submanifolds are $k$-planes.  Equivalently, $d \pi_x$, considered as a map on $T_x \R^n = \R^n$, vanishes on $\R^k = \pi(x)$.   A reasonable generalization of nondegeneracy is the condition that $d \pi_x$ vanishes \emph{only} on $\pi(x)$; equivalently,  $\rank(d\pi_x) = q = n-k$ for all $x$.  However, while this condition guarantees that no two fibers are parallel as $k$-planes, it does not guarantee that no two fibers contain parallel lines, so this condition does not adequately capture the notion of skewness.

We recall from Section \ref{sec:grassmann} that the \emph{bad cone} $\bee_u$ at $u \in \grass$ is the set of $u' \in \grass$ such that the $k$-planes $u$ and $u'$ intersect in at least a line.

\begin{definition}
A smooth fibration $\pi \colon \R^n \to U \subset \grass$ is \emph{nondegenerate} if for all $x \in \R^n$, $\pi\big|_{\pi(x)^\perp}$ is an immersion transverse to the bad cone $\bee_{\pi(x)}$.
\end{definition}

Note that if $\pi$ is nondegenerate at a point $x$, then nondegeneracy holds in a neighborhood $E\subset P^\perp$ of $x$.  Moreover, this neighborhood may be chosen so that $\pi \big|_{P^\perp}$ takes $E$ injectively to the complement of the union (over $y \in E$) of the bad cones $\bee_{\pi(y)} \subset \grass$.  Therefore the fibers at $y_1, y_2 \in E$ do not share parallel lines, and hence any nondegenerate fibration is locally skew. 

%\begin{thm}
%A locally skew fibration is globally skew.  In particular, any nondegenerate fibration $\R^p \to \R^n \xrightarrow{\pi} U$ is skew.
%\end{thm}

The nondegeneracy condition may therefore be thought of as ``first-order skewness,'' much like the immersion condition may be thought of as ``first-order embeddedness.''  However, the remarkable feature of locally skew fibrations known as Continuity at Infinity (see Section \ref{sec:contatinf}) ensures that a locally skew fibration is globally skew, and so nondegeneracy actually yields global skewness. 

Another justification of the definition of nondegeneracy will be seen in Section \ref{sec:skewsphere}, where we see that any smooth fibration of $\R^n$ which corresponds via central projection to a smooth great sphere fibration (with no first-order condition!) must be nondegenerate.

We will now prove a version of Lemma \ref{lem:toplocal} for nondegenerate fibrations.   In light of Section \ref{sec:grassmann}, especially Lemma \ref{lem:transversenonsingular}, it should not be surprising that nondegeneracy of $\pi$ at a point $x$, which concerns transversality to the bad cone at $\pi(x)$,  corresponds to nonsingularity of an associated bilinear map.  In fact, since $B = \varphi \circ \pi \big|_E$, the associated nonsingular bilinear map is precisely $dB_x$.  It is somewhat surprising that nondegeneracy of $\pi$ actually corresponds to something stronger: nonsingularity of $dA_x$.  This is formalized as follows.

\begin{lem}
\label{lem:nonnon}
Suppose that $A \colon E \to \Hom(\R^{k+1}, \R^q)$ corresponds to a fibration of (some open subset $T$ of) $\R^n$ in the manner described above.  Then the fibration is nondegenerate if and only if for every $y \in E$, $dA_y \colon \R^q \times \R^{k+1} \to \R^q$ is a nonsingular bilinear map.
\end{lem}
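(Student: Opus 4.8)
The plan is to reduce the statement to Lemma~\ref{lem:transversenonsingular}, applied not in $\grass$ but in $\grassy$, via the embedding $\alpha\colon\affgrass\to\grassy$. First I fix the right coordinates: write $\R^{n+1}=\R^k\oplus\R^q\oplus\R$, where $\R^k=u$ is the linear direction of the fiber through the center of the chart $E$ (so that $B$ vanishes there), $\R^q=u^\perp\subset\R^n$, and the last summand is the extra coordinate used by $\alpha$. Linearizing as in the definition of $\alpha$, one checks directly that for every $y\in E$ the affine fiber $P_y$ through $y$ — the graph of $t\mapsto B(y)t+y$ over $\R^k$ — is sent by $\alpha$ to the $(k+1)$-plane which is the graph of the linear map $(t,\lambda)\mapsto B(y)t+\lambda y=A(y)(t,\lambda)$ from $w_0:=\R^k\oplus\R$ to $w_0^\perp=\R^q$. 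Hence, if $\Psi$ denotes the Grassmann parametrization of $\grassy$ based at $w_0$, the map $\beta\colon E\to\grassy$, $\beta(y)=\alpha(P_y)$, is exactly $\Psi\circ A$; so $d\beta_y=d\Psi_{A(y)}\circ dA_y$, and, up to the linear isomorphism $d\Psi_{A(y)}$, the bilinear map that Lemma~\ref{lem:transversenonsingular} attaches to $\beta$ is $dA_y$.

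Next I would show that the fibration is nondegenerate at $P_y$ if and only if $\beta$ is an immersion at $y$ whose image is transverse to the bad cone $\bee_{\beta(y)}=\bee_{\alpha(P_y)}\subset\grassy$. Granting this, Lemma~\ref{lem:transversenonsingular} applied to the $q$-dimensional submanifold $\beta(E)\subset\grassy$ at $\beta(y)$ — after the harmless change of Grassmann chart from $\Psi$ (based at $w_0$) to the one based at $\beta(y)$, which preserves nonsingularity of the associated bilinear map because transversality to the bad cone is intrinsic — says exactly that nondegeneracy at $P_y$ is equivalent to $dA_y$ being a nonsingular bilinear map $\R^q\times\R^{k+1}\to\R^q$. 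To prove the ``if and only if'' I would use Lemma~\ref{lem:affinebadcone}: inside $\alpha(\affgrass)$ the bad cone $\bee_{\alpha(P_y)}$ is the set of $\alpha$-images of affine $k$-planes not skew to $P_y$, i.e.\ those sharing a parallel line with $P_y$ together with those meeting $P_y$. The first family is precisely what $\bee_{\pi(y)}$ cuts out in $\grass$ under the linear-part projection — the ``parallelism'' content of the definition of nondegeneracy — while the second family is never met by $\beta$ away from $y$, because distinct fibers of a fibration are disjoint. Combining this with the identification of the tangent cone of a bad cone with the bad cone itself (Section~\ref{sec:grassmann}) and with the remark after Lemma~\ref{lem:toplocal} — where $\Ker(A(y)-A(z))=0$ is read as $\alpha(P_z)\notin\bee_{\alpha(P_y)}$, and whose first-order version is nonsingularity of $dA_y$ — lets one match the two transversality conditions.

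The real obstacle is exactly this matching. A naive application of Lemma~\ref{lem:transversenonsingular} straight in $\grass$ only detects the ``infinitesimal parallelism'' directions and returns nonsingularity of $dB_y$ (as noted in the paragraph preceding the lemma), which is strictly weaker than nonsingularity of $dA_y$; the substance of the lemma is that the nondegeneracy hypothesis — transversality of $\pi|_{\pi(y)^\perp}$ to $\bee_{\pi(y)}$ read along the whole preimage of the bad cone, not merely at a single point — together with the disjointness of distinct fibers, exactly promotes this to nonsingularity of the larger map $dA_y$, whose extra $\lambda\ne 0$ directions record ``infinitesimal intersection'' and only appear after passing to $\grassy$. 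Making that promotion precise — and, as a routine side point, checking the chart-change invariance of nonsingularity used above — is where the bulk of the work lies; the remaining ingredients (the formula for $\alpha(P_y)$ in the $\Psi$-chart and the appeal to Lemma~\ref{lem:transversenonsingular}) are bookkeeping.
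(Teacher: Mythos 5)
Your reduction to Lemma \ref{lem:transversenonsingular} applied in $\grassy$ is set up correctly: $\alpha(P_y)$ is indeed the graph of $A(y)$ over $w_0=\R^k\oplus\R$, so $\beta=\Psi\circ A$ in the chart at $w_0$, and transversality of $\beta(E)$ to $\bee_{\beta(y)}$ at $\beta(y)$ is equivalent to nonsingularity of $dA_y$. But the statement you defer --- ``the fibration is nondegenerate at $P_y$ if and only if $\beta$ is an immersion at $y$ whose image is transverse to $\bee_{\alpha(P_y)}$'' --- is, modulo that bookkeeping, the entire content of the lemma, and the mechanism you sketch for it does not work. Splitting $\bee_{\alpha(P_y)}$ via Lemma \ref{lem:affinebadcone} into ``parallel'' and ``intersecting'' strata, you propose to handle the intersecting directions by noting that $\beta(E)\setminus\{\beta(y)\}$ avoids that stratum because distinct fibers are disjoint. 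That is a zeroth-order avoidance statement, and it does not yield the first-order statement that $T_{\beta(y)}\beta(E)$ misses the corresponding tangent directions of the cone: a submanifold can be tangent to a cone that it meets only at the vertex (in coordinates, $A(y)-A(y_0)$ can be injective on $(t,1)$-vectors for all $y\neq y_0$ while $dA_{y_0}(\xi_0)(t_0,1)=0$ for some $\xi_0\neq 0$). Nothing in your sketch actually invokes nondegeneracy at the points of $P_y$ other than $y$ itself, and that hypothesis is exactly what is needed to rule out such tangency; this is also why the converse direction (nonsingular $dA_y$ implies nondegeneracy along all of $P_y$, not just at $y$) is not addressed by your outline.

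The paper supplies the missing mechanism with a change-of-slice construction. For each $t\in\R^k$ it considers $f_t\colon E\to\R^q_t$, $f_t(y)=(t,B(y)t+y)$, sending $y$ to the intersection of its fiber with the translated slice $\R^q_t$; one computes $d(f_t)_y(\xi)=dA_y(\xi)(t,1)$, and the factorization $\pi|_E=\pi|_{E_t}\circ f_t$ converts nondegeneracy of $\pi$ at the point $f_t(y)\in P_y$ --- an immersion-plus-transversality condition on the slice $E_t$, not on $E$ --- into invertibility of $d(f_t)_y$, i.e., into nonvanishing of $dA_y(\xi)(t,1)$ for $\xi\neq 0$; the $\lambda=0$ directions are handled by Lemma \ref{lem:transversenonsingular} in $\grass$ exactly as you say. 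Your argument needs these maps (or an equivalent device relating the slices $E_t$ back to $E$) to close the gap; as written, the crucial implication is asserted and correctly located, but not established.
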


\begin{remark} In \cite{OvsienkoTabachnikov}, Ovsienko and Tabachnikov showed that the existence of a fibration of $\R^n$ by skew affine copies of $\R^k$ implies $k + 1 \leq \rho(q)$ by constructing $k$ linearly independent tangent vector fields on $S^{q-1}$ as follows: Consider a fiber $P$ and project those fibers which pass through a small sphere $S \subset E \subset P^\perp$ to the tangent spaces of $S$; it is straightforward to check that the skew condition implies linear independence.  Lemma \ref{lem:nonnon} can be viewed as an infinitesimal version of this argument, which applies to smooth nondegenerate fibrations, and which uses the nondegeneracy to construct a nonsingular bilinear map $\R^q \times \R^{k+1} \to \R^q$, yielding the same obstruction.
\end{remark}

\begin{proof}[Proof of Lemma \ref{lem:nonnon}]
Consider a smooth fibration $\pi \colon T \to \grass$.   Let $z \in T$ and let $(u,v)$ be the fiber through $z$.  For simplicitly we will assume that $z$ is the origin in $\R^n$, and we will consider the orthogonal decomposition $\R^n = u \times u^\perp \approx \R^k \times \R^q$.  The fibration $\pi$ induces maps $B \colon E \subset \R^q \to \Hom(\R^k,\R^q)$ and $A \colon E \subset \R^q \to \Hom(\R^{k+1},\R^q)$. Writing $\pi \big|_E = \varphi \circ B$, Lemma \ref{lem:transversenonsingular} tells us that
\begin{itemize}
\item nondegeneracy of $\pi$ at $y$ corresponds to nonsingularity of $dB_y$; that is, for nonzero $\xi \in \R^q$ and nonzero $t \in \R^k$, $0 \neq dB_y(\xi)(t) = dA_y(\xi)(t,0)$.
\end{itemize}
It remains to show that
\begin{itemize}
\item nondegeneracy of $\pi$ at other points in the fiber containing $y$ corresponds to the statement that $0 \neq dA_y(\xi)(t,1)$ for nonzero $\xi \in \R^q$.
\end{itemize}
Loosely speaking, the first item is a first-order condition representing nonparallelism, and the second item is a first-order condition representing the nonintersection property (compare with Lemmas \ref{lem:affinebadcone} and \ref{lem:toplocal}).

Now, given $t \in \R^k$, let $\R^q_{t} = \left\{t\right\} \times \R^q$ be the copy of $\R^q$ obtained by translating $u^\perp$ by $t$.  Define $f \colon E \to \R^q_t \subset \R^n$ by $f(y) = (t,A(y)(t,1)) = (t,B(y)t + y)$.  Geometrically, $f$ takes a point $y \in E$ to the intersection point of $\R^q_t$ with the fiber through $y$.   This intersection point is unique since any fiber which intersects $E$ transversely also intersects $\R^q_t$ transversely.   Thus $f$ is a bijection onto its image $E_t \coloneqq \R^{q}_t \cap T$. 

It is convenient to consider the following diagram
\[
\begin{tikzcd}[row sep=2.5em, column sep=1.25em]
    E \arrow[rr, "f"] \arrow[swap, dr, "\pi|_E"] & & E_t \arrow[dl, "\pi|_{E_t}"] \\
    & \grass \\[-3em]               
\end{tikzcd}
\]
which efficiently shows that the nondegeneracy of $\pi$, which implies that $\pi \big|_E$, and $\pi \big|_{E_t}$ are both diffeomorphisms onto the same image in $\grass$,  yields that $f$ is a diffeomorphism.  Thus for nonzero $\xi$, we have $0 \neq df_y(\xi) = dA_y(\xi)(t,1) \in T_{f(y)}\R^q_{t} = \R^q$.    Conversely, if $dA_y$ is nonsingular on $E$, then so is $dB_y$, and the argumentation above shows that this is equivalent to the statement that $\pi\big|_E$ is a diffeomorphism onto its image.   Moreover, nonsingularity of $A$ implies that $f$ is a diffeomorphism.  Thus $\pi\big|_{E_t} = \pi\big|_E \circ f^{-1}$ is a diffeomorphism whose image is transverse to the bad cone, and so $\pi$ is nondegenerate on $E_t$.
\end{proof}

\begin{remark} To further emphasize the split between nonparallelism and nonintersection, we note that if $\pi$ is only assumed to be smooth, each $f$ is still a diffeomorphism.  Indeed,  we may write $f = (p \big|_{E_t})^{-1} \circ (p \big|_E)$, where $p$ maps $y \in \R^n$ to the point on the fiber through $y$ which is nearest to the origin (see Section \ref{sec:generalfibrations}).  Thus even without the nondegeneracy assumption,  it still follows that $dA_y(\xi)(t,1)$ is nonzero for nonzero $\xi$. 
\end{remark}

\begin{cor}
\label{cor:noreal}
Suppose that $B \colon E \to \R^{n-1}$ is obtained from a smooth nondegenerate fibration of $\R^n$ by lines in the manner described above.  Then for every $y \in E$, $dB_y \colon \R^{n-1} \to \R^{n-1}$ has no real eigenvalues.
\end{cor}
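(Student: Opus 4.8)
The plan is to derive this as an immediate corollary of Lemma \ref{lem:nonnon}. Specialize that lemma to $k=1$, so $q = n-1$. Identifying $\Hom(\R,\R^{n-1})$ with $\R^{n-1}$, the map obtained from the fibration is $B \colon E \to \R^{n-1}$, and the associated map $A \colon E \to \Hom(\R^2,\R^{n-1})$ is $A(y)(t,\lambda) = t\,B(y) + \lambda y$ (in matrix terms, $A(y)$ is $B(y)$ with the column $y$ appended). Differentiating in $y$, for $\xi \in \R^{n-1}$ we get $dA_y(\xi)(t,\lambda) = t\,dB_y(\xi) + \lambda\xi$. By Lemma \ref{lem:nonnon}, nondegeneracy of the fibration is equivalent to $dA_y \colon \R^{n-1}\times\R^2 \to \R^{n-1}$ being a nonsingular bilinear map for every $y \in E$.

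With this in hand, the argument is a one-line contradiction. Suppose $dB_y$ had a real eigenvalue $\mu \in \R$ with eigenvector $\xi \neq 0$, i.e.\ $dB_y(\xi) = \mu\xi$. Then, taking $(t,\lambda) = (1,-\mu) \in \R^2\setminus\{0\}$,
\[
dA_y(\xi)(1,-\mu) = dB_y(\xi) - \mu\xi = \mu\xi - \mu\xi = 0
\]
with $\xi \neq 0$, contradicting nonsingularity of $dA_y$. Hence $dB_y$ has no real eigenvalues, for every $y \in E$.

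There is essentially no obstacle here; the only care needed is bookkeeping of conventions — that $dA_y(\xi)$ is $dB_y(\xi)$ with the evaluation direction $\xi$ appended as an extra column, which is precisely the structure that converts ``$dA_y$ nonsingular'' into ``$dB_y$ has no real eigenvalue.'' I would also remark in passing that the converse holds: if $dB_y$ has no real eigenvalue it is in particular invertible, so $t\,dB_y(\xi)\neq 0$ for $t,\xi$ nonzero, and $t\,dB_y(\xi) + \lambda\xi = 0$ with $\lambda\neq 0$ would force $-\lambda/t$ to be a real eigenvalue; thus $dA_y$ is nonsingular. This recovers, at the infinitesimal level, the known correspondence between great circle fibrations and maps without real eigenvalues (cf.\ the discussion after Lemma \ref{lem:affinebadcone}), though only the stated direction is needed for the Corollary.
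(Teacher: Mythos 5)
Your proof is correct, and it is exactly the ``direct'' argument that the paper's own proof gestures at in its opening sentence before declining to carry it out: the paper writes that the statement ``can be proven directly from nonsingularity of $A$ and the relationship between $A$ and $B$, but we prefer to be somewhat more explicit.'' You take the direct route: specialize Lemma \ref{lem:nonnon} to $k=1$, observe that $dA_y(\xi)(t,\lambda) = t\,dB_y(\xi) + \lambda\xi$, and note that a real eigenpair $(\mu,\xi)$ of $dB_y$ would produce the nontrivial kernel element $(\xi,(1,-\mu))$ of the bilinear map $dA_y$. The paper instead re-derives the same two facts geometrically without invoking the lemma wholesale: it first uses that $V|_E$ (equivalently $B$) is an immersion, hence a local diffeomorphism, to rule out the eigenvalue $0$, and then uses that the slice map $f_t(y) = (t, tB(y)+y)$ is a diffeomorphism to rule out the eigenvalue $-\tfrac{1}{t}$ for each nonzero $t$. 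The content is identical --- the condition $t\,dB_y(\nu)+\nu\neq 0$ is precisely nonsingularity of $dA_y$ evaluated at $(t,1)$ --- so the difference is one of packaging: yours is the shorter formal deduction, the paper's makes visible which geometric feature (nonparallelism versus nonintersection of fibers) excludes which eigenvalues. Your closing remark on the converse is also correct and is consistent with the equivalence already contained in Lemma \ref{lem:nonnon}.
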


\begin{proof}
The statement can be proven directly from nonsingularity of $A$ and the relationship between $A$ and $B$, but we prefer to be somewhat more explicit.   Recall that a smooth nondegenerate line fibration of $\R^n$ is given by a smooth unit vector field $V \colon \R^n \to S^{n-1}$, such that all integral curves are lines, and such that $\nabla V$ vanishes only in the direction of $V$.   Given $z \in \R^n$ and the usual neighborhood $E \subset u^\perp$, $V\big|_E$ is an immersion by nondegeneracy, so $B$ is a diffeomorphism onto its image (and hence zero is not an eigenvalue of $dB_y$).  Now by the same argument as given in Lemma \ref{lem:nonnon},  the map $f_t \colon E \to \R^{n-1}_t \subset \R^n \colon y \mapsto (t,tB(y) + y)$ is a diffeomorphism onto its image for fixed nonzero $t$.  Thus for nonzero $\nu$, $0 \neq d(f_t)_y(\nu) = tdB_y(\nu) + \nu$, and so $-\frac{1}{t}$ is not an eigenvalue of $dB_y$.
\end{proof}

\begin{remark} To continue the remark above: even without the nondegeneracy assumption, the fact that the oriented lines do not intersect implies that $dB_y$ has no \emph{nonzero} real eigenvalues.  See \cite{HarrisonAGT} for further discussion on line fibrations of $\R^3$ which are not necessarily skew, including the no nonzero real eigenvalue condition and the relationship with tight contact structures of $\R^3$.
\end{remark}

\subsection{A characterization of nondegenerate fibrations}

In this section we see precisely when a submanifold of $\affgrass$ corresponds to a smooth nondegenerate fibration of $\R^n$.

We use the notation $\rho \colon \affgrass \to \grass$ for the projection of the canonical bundle, and $\xi_U \to U$ refers to the bundle obtained by restricting $\rho$ to $\xi_U = \rho^{-1}(U)$ to some base space $U \subset \grass$.

\begin{thm} 
\label{thm:globalclass}
Let $M \subset \affgrass$ be a smooth $(n-k)$-dimensional submanifold.  The following are equivalent:
\begin{compactenum}[(a)]
\item The elements of $M$ are the fibers of a smooth nondegenerate fibration $\pi \colon \R^n \to U$.
\item $M$ is topologically closed, connected, and the image $\alpha(M)$ is transverse to every bad cone.
\item $M$ is the image of a smooth section $v \colon U \to \xi_U$ for some connected, contractible smooth submanifold $U \subset \grass$ such that $\alpha(M)$ is transverse to every bad cone and $u_n \to \partial U$ implies $|v(u_n)| \to \infty$. 
\end{compactenum}
\end{thm}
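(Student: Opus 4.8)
The plan is to prove the cycle of implications $(a)\Rightarrow(c)\Rightarrow(b)\Rightarrow(a)$. The first two are essentially bookkeeping once the right lemmas are assembled, while $(b)\Rightarrow(a)$ is where the content lies.

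For $(a)\Rightarrow(c)$: a nondegenerate fibration $\pi\colon\R^n\to U$ is locally skew, hence globally skew by Theorem~\ref{thm:lg}, so $\pi$ is injective and $U=\pi(\R^n)$ is, by Lemma~\ref{lem:opencontractible}, a connected, contractible $(n-k)$-dimensional submanifold of $\grass$. The ``nearest point to the origin'' assignment provides the smooth section $v\colon U\to\xi_U$ with $v(U)=M$, and the escape condition $u_n\to\partial U\Rightarrow|v(u_n)|\to\infty$ is the second assertion of Lemma~\ref{lem:closed}. For the transversality, write the fibration near a fiber $P$ by the local map $A\colon E\to\Hom(\R^{k+1},\R^q)$ of Section~\ref{sec:nondegenerate}; a direct computation shows that $\varphi^{-1}$ of the local sheet of $\alpha(M)$ through $\alpha(P)$ is exactly $A$, so $T_{\alpha(P)}\alpha(M)$ is the image of $dA$. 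Nondegeneracy of $\pi$ along all of $P$ makes this image a space of injective maps off $0$ (Lemma~\ref{lem:nonnon}), which by Lemma~\ref{lem:transversenonsingular} applied in $\grassy$ is exactly transversality of $\alpha(M)$ to $\bee_{\alpha(P)}$ there, and global skewness together with Lemma~\ref{lem:affinebadcone} rules out any further intersection of $\alpha(M)$ with $\bee_{\alpha(P)}$. For $(c)\Rightarrow(b)$: $M=v(U)$ is connected as a continuous image of $U$; and if $(u_n,v(u_n))\to(u_*,v_*)$ in $\affgrass$ then $|v(u_n)|$ is bounded, so by the escape hypothesis a subsequence of the $u_n$ stays in a compact subset of $U$, forcing $u_*\in U$ and then $v_*=v(u_*)$ by continuity of $v$; thus $(u_*,v_*)\in M$, so $M$ is closed, and the transversality is inherited.

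For $(b)\Rightarrow(a)$ I would form the incidence set $Z=\{(x,P)\in\R^n\times M:x\in P\}$, a smooth $n$-manifold ($\R^k$-bundle over $M$), together with $\Pi\colon Z\to\R^n$, $(x,P)\mapsto x$, and show $\Pi$ is a diffeomorphism. The first step is to observe that transversality of $\alpha(M)$ to $\bee_{\alpha(P)}$ forbids $T_PM$ from containing a vector vertical for $\rho\colon\affgrass\to\grass$: parallel-translating $P$ produces a curve whose $\alpha$-image lies entirely in $\bee_{\alpha(P)}$, so its nonzero velocity, mapped by the embedding $\alpha$, would lie in $T_{\alpha(P)}\alpha(M)\cap T_{\alpha(P)}\bee_{\alpha(P)}$, contradicting transversality. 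Hence $M$ is locally the graph of a section, the local maps $B$ and $A$ exist, and transversality becomes nonsingularity of $dA$ exactly as above. Reading the diagram in the proof of Lemma~\ref{lem:nonnon} backwards --- in graph coordinates $\Pi(t,y)=(t,B(y)t+y)$, whose Jacobian is invertible precisely because $dA_y(\nu)(t,1)\neq0$ for $\nu\neq0$ --- one finds that $\Pi$ is a local diffeomorphism. It is also proper: the preimage of a compact $K\subset\R^n$ involves only planes $P\in M$ whose nearest point to the origin is bounded in terms of $K$, and since $M$ is closed in $\affgrass$ while the base $\grass$ is compact, such planes form a compact subset of $M$. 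A proper local diffeomorphism onto $\R^n$ is a covering map, and as $Z$ is connected (an $\R^k$-bundle over the connected $M$) and $\R^n$ is simply connected, $\Pi$ is a diffeomorphism. Therefore $M$ partitions $\R^n$, the induced map $\R^n\to M$ and its linearization $\pi\colon\R^n\to\grass$ are smooth, the transversality hypothesis together with Lemmas~\ref{lem:transversenonsingular} and~\ref{lem:nonnon} makes $\pi$ nondegenerate, and Theorem~\ref{thm:lg} upgrades this to global skewness, so $\pi\colon\R^n\to U:=\pi(\R^n)$ is the fibration required in $(a)$.

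The step I expect to be the main obstacle is $(b)\Rightarrow(a)$: extracting from a purely pointwise, first-order transversality hypothesis the global conclusion that $M$ tiles $\R^n$. One cannot shortcut this through the Gluck--Warner--Yang criterion for great sphere fibrations, precisely because $\alpha(M)$ is in general not closed in $\grassy$ --- this is the source of the discrepancy between skew and sphere fibrations emphasized in the introduction --- so the covering-space argument (no vertical tangents, invertibility of $d\Pi$, properness from closedness, simple connectivity of $\R^n$) has to be carried out by hand. A minor but recurring point of care is that ``$\alpha(M)$ transverse to every bad cone'' is to be read, as in Gluck--Warner--Yang, as transversality to $\bee_{\alpha(P)}$ for each $P\in M$; a genuine skew fibration does meet bad cones $\bee_u$ with $u\notin\alpha(M)$, but only along positive-dimensional loci, where the trivial-tangent-intersection notion of transversality used here cannot be demanded.
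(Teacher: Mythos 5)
Your proposal is correct and follows essentially the same route as the paper: the same cycle $(a)\Rightarrow(c)\Rightarrow(b)\Rightarrow(a)$, the same use of Lemmas \ref{lem:opencontractible}, \ref{lem:closed}, \ref{lem:nonnon}, and \ref{lem:transversenonsingular} for the first two implications, and for $(b)\Rightarrow(a)$ the same incidence space (your $Z$ is the paper's $\rho^*F\big|_M$) with the same local-diffeomorphism-plus-properness argument, your covering-space conclusion being exactly the standard proof of the Hadamard global inverse theorem the paper cites. Your explicit verification that transversality rules out vertical tangents is a detail the paper passes over more quickly, but the argument is the same.
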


\begin{proof}
``(a) $\Rightarrow$ (c)'':  By Lemma \ref{lem:opencontractible}, the base space $U$ is a connected, contractible, $(n-k)$-dimensional submanifold of $\grass$.   By Corollary \ref{cor:globallyskew}, which will be shown following the Continuity at Infinity investigation of Section \ref{sec:contatinf}, any nondegenerate fibration is skew, and so by Lemma \ref{lem:closed}, the corresponding smooth section $v \colon U \to \xi_U$ satisfies the condition that $u_n \to \partial U $ implies $|v(u_n)| \to \infty$.  Now given $x \in \R^n$ and its containing plane $P = (u,v)$, the induced map $A \colon E \to \Hom(\R^{k+1},\R^q)$ satisfies the property that $dA_x$ is nonsingular (Lemma \ref{lem:nonnon}), which by Lemma \ref{lem:transversenonsingular} implies that $\alpha(M)$ is transverse to the bad cone at $\alpha(P)$.

``(c) $\Rightarrow$ (b)'':  The topological closedness follows from the final condition of (c).

``(b) $\Rightarrow$ (a)'':  The idea of the argument is as follows: we have shown in Lemma \ref{lem:nonnon} that the transversality to the bad cone corresponds to nondegeneracy; however, this is a local statement.  The key idea is that the additional conditions on $M$ allow for the application of a ``global inverse'' result of Hadamard (see \cite[Theorem 6.2.8]{KrantzParksBook} or \cite{Gordon}), which asserts that a smooth proper local diffeomorphism from a smooth connected manifold $M$ to $\R^n$ is a diffeomorphism; here \emph{proper} means that the preimage of every compact set is compact.

There is a $k$-plane bundle over $\grass$ for which the fiber over $u \in \grass$ is $u$ itself.   We will refer to the total space of this bundle as $F$.  Consider the pullback $\rho^*F$ and its restriction to $M$.  Define the map $f \colon \rho^*F\big|_M \to \R^n \colon (u,v,t) \mapsto v + t$, where $v$ represents the nearest point from $(u,v)$ to the origin of $\R^n$.   Geometrically, the image of $f$ consists of those points lying in the $k$-planes which are elements of $M$.

Transversality of $\alpha(M)$ to the bad cone at $\alpha(u,v)$ implies transversality of $\rho(M)$ to the bad cone at $\rho(u,v) = u$; in particular $\rho\big|_N$ maps a neighborhood $N \ni (u,v)$ diffeomorphically to a neighborhood $\rho(N)$ of $u$ in $\grass$, and $N$ is the image of a smooth section defined on $\rho(N)$.   The restriction of $f$ to $\rho^*F\big|_N$ parametrizes an open fibered neighborhood $T$ of the affine $k$-plane $(u,v)$ in $\R^n$.  Define $\pi \colon T \to N \colon v + t \mapsto v$.   Then $\pi$ is a smooth map on $T$ whose fibers $\pi^{-1}(v)$ are $k$-planes $(u,v)$ from $N$, and by transversality to the bad cone and Lemma \ref{lem:nonnon}, $\pi$ is a smooth nondegenerate fibration of $T$.    Therefore $f$ is an immersion, hence a local diffeomorphism.

We show that $f$ is proper.  Consider the preimage $f^{-1}(K)$ of a compact subset $K \subset \R^n$.  Let $(u_k,v_k,t_k)$ be a sequence in $f^{-1}(K)$, and let $x_k = f(u_k,v_k,t_k)$.   By compactness of $\grass$, $u_k$ admits a convergent subsequence to $u \in \grass$.  By the fact that $|v_k| \leq |x_k|$ is bounded by the maximum distance from the origin to $K$, $v_k$ admits a further convergent subsequence to $v$.  Finally, $|t_k| = |v_k - x_k|$ is bounded, and so $t_k$ admits a further convergent subsequence to $t$.  Since $M$ is topologically closed, $(u,v,t) \in M$.  Thus $f^{-1}(K)$ is compact, and hence $f$ is proper.

Therefore $f$ is a diffeomorphism, and the (local) map $\pi$ defined above gives a global smooth nondegenerate fibration of $\R^n$.
%CLOSEDNESS ARGUMENT SOMETHING LIKE: Consider the map $U \to \R : u \mapsto |v(u)|$, and let $m$ be the infimum of the image.  There is a sequence $u_n$ with $|v(u_n)|$ approaching $m$.  Choose a convergent subsequence of $u_n$; the limit $u$ must be in $U$ by the second bullet point.  Thus the infimum $m$ is a minimum, achieved at the point $u$.  Assume that $m$ is not $0$, so that $u$ does not contain the origin of $\R^n$.

%Consider in $\R^n$ the copy of $\R^q = u^\perp$ through the origin.  Let $V \subset U$ be a neighborhood of $u$, small enough so that for each $p$-plane $u' \in V$, $u'$ intersects $\R^q$ transversely.  The map $V \to \R^q$, which sends each $u'$ to its intersection point with $\R^q$, is continuous and injective, hence the image is an open subset of $\R^q$, by invariance of domain.  But in such a subset, there is a point neared than $v(u)$ to the origin, a contradiction.  Thus $m$ must be $0$, and so $u$ passes through the origin of $\R^n$.  For an arbitrary point in $x \in \R^n$, applying the same argument to the map $U \to \R : u \mapsto |x - v(u)|$ ensures that some plane passes through $x$. 

%Now the image of $f$ is open and closed, so it must be all of $\R^n$.  Since $f$ is a proper local diffeomorphism to $\R^n$, it must be a diffeomorphism by a theorem of Hadamard.
\end{proof}

\begin{remark} The above classification generalizes two prior results.  A geometric version of this classification was given by Salvai in the case of smooth nondegenerate line fibrations of $\R^3$ \cite{Salvai}: the classification was not given in the topological language of transversality to the bad cone in $AG_1(3)$, but rather, as definiteness of $M$ with respect to a certain pseudo-Riemannian metric on $AG_1(3) \sim TS^2$ which does not exist in the general case of $\affgrass$. In \cite{Harrison}, we stated a version of the above classification for skew fibrations, but there the transversality to the bad cone was replaced with the condition that the elements of $M$ are skew, and thus much of the difficulties in the above proof were avoided there.
\end{remark}

\begin{remark} For skew line fibrations of $\R^3$, the base space $U \subset G_1(3) \sim S^2$ is not only contractible, but convex in $S^2$ (see \cite{Salvai}, \cite{Harrison} for two different arguments) and homeomorphic to $\R^2$.   Learning more about the topology and geometry of the base space of a skew line fibration of $\R^n$ is one of the most important prerequisites for future advancements for line and great circle fibrations.  
\end{remark}

\begin{example} The image $M$ of the zero section $S^2 \to TS^2$ is a topologically closed, connected submanifold of $TS^2 \sim AG_1(3)$ for which $\rho(M) = S^2$ is transverse to the bad cone at every point, but the resulting collection of lines intersect at the origin.  This example highlights the difference between transversality of $\rho(M)$ to the bad cones in $\grass$ and transversality of $\alpha(M)$ to the bad cones in $G_{k+1}(n+1)$.
\end{example}

\subsection{When local becomes global: fibrations with a transverse $\R^q$}
\label{sec:global}

We have seen that a skew (or nondegenerate) fibration $\pi \colon \R^n \to \grass$ corresponds, in a neighborhood $E \subset P^\perp$ of a point $z$ on a fiber $P$, to a map $B \colon E \to \Hom(\R^k, \R^q)$.   The map $B$ is only defined locally, since a fiber far from $P$ may fail to be transverse to $P^\perp$.  However, if there happens to exist a copy of $\R^q$ which is transverse to all fibers from the fibration, then a map $B$ may be defined globally, and the entire data of the fibration may be recovered from the map $B$. 

We will see later in this section that every fibration of $\R^{2k+1}$ by skew affine copies of $\R^k$ (which occurs if and only if $k \in \left\{ 1, 3, 7, \right\}$) admits a transverse copy of $\R^{k+1} = \R^q$, so there exists a globally-defined map $B$.  In fact, we show that every fibration of $\R^n$ by skew affine copies of $\R^k$ admits a transverse $\R^{k+1}$ (here we say that two affine subspaces of $\R^n$ are transverse if their linear translates meet only in the origin; in particular their dimensions may not be complementary). 

Before initiating this lengthy argument, we first study the consequences of a globally-defined map $B$.   To this end, we define $\mathscr{F}$ as the space of all skew (or nondegenerate) fibrations of $\R^n$ by $\R^k$.  When studying skew fibrations, we consider $\mathscr{F}$ in the subspace topology inherited from $C^0(\R^n,\grass)$.  When studying nondegenerate fibrations, we consider $\mathscr{F}$ in the subspace topology inherited from $C^1(\R^n,\grass)$, though results in the $C^k$ or $C^\infty$ topologies will be the same.   It will be clear, either from context or from explicit statement, whether we refer to skew or nondegenerate fibrations.  Let $\mathscr{F}_{tr}$ represent the subspace of $\mathscr{F}$ consisting of those fibrations for which there exists a copy of $\R^q$ transverse to all fibers.

%A priori, this setup is only local: outside of a small neighborhood of $z$, a fiber may no longer be transverse to the copy of $\R^q$, so it is not the graph of a function as described above (see the fiber labeled $P$ in Figure \ref{fig:graph}).  If we were assured of the existence of a copy of $\R^q$ transverse to all fibers, then the setup above would be global, and the entire fibration could be defined in terms of a single map $B : \R^q \to \Hom(\R^p, \R^q)$.  All known examples of $(p,n)$-fibrations are global in this sense.  In particular, the $(p,n)$-fibrations constructed via Property P use \emph{linear} maps $B : \R^q \to \Hom(\R^p, \R^q)$; that is, $B \in \Hom(\R^q, \Hom(\R^p,\R^q))$.

%We will show the following in Section \ref{sec:transverse}:

%\begin{thm}
%Given a skew fibration $\R^p \to \R^n \xrightarrow{\pi} U$, there exists a copy of $\R^{p+1}$ transverse to all fibers.
%\end{thm}

%\begin{cor}
%Given a skew fibration $\R^p \to \R^{2p+1} \xrightarrow{\pi} U$, which requires $p \in \left\{ 1,3,7 \right\}$, there exists a copy of $\R^{n-p}$ transverse to all fibers.  That is, $\mathscr{F} = \mathscr{F}_{tr}$.
%\end{cor}

All known examples of skew/nondegenerate fibrations admit a transverse copy of $\R^q$, so it may be the case that $\mathscr{F} = \mathscr{F}_{tr}$ for every pair of dimensions. We do not have any tools to study fibrations in the (possibly empty) space $\mathscr{F} - \mathscr{F}_{tr}$.

Given a point $x \in \R^n$, let $d(x)$ represent the minimum distance from the origin to the fiber $P = (u,v)$ through $x$.  That is, $d(x) = |v|$.

\begin{thm}
\label{thm:skewclass}
A continuous map $B : \R^q \to \Hom(\R^k,\R^q)$ corresponds to a skew fibration, in the manner described above, if and only if
\begin{itemize}
\item for every pair of distinct points $x, y \in \R^k$, $\Ker(A(x) - A(y)) = 0$, and
\item if $x_n$ is a sequence of points in $\R^q$ with no accumulation points, then $d(x_n) \to \infty$.
\end{itemize}
Conversely, any skew fibration in $\mathscr{F}_{tr}$ can be described by such a $B$ on a suitable $\R^q \subset \R^n$. 
\end{thm}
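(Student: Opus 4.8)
The plan is to prove the two implications of the biconditional and then the final ``conversely'' sentence, drawing the local content from Lemmas~\ref{lem:toplocal} and~\ref{lem:closed} and using Brouwer's invariance of domain to promote local statements to global ones.

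For the ``only if'' direction, suppose $B\colon\R^q\to\Hom(\R^k,\R^q)$ arises from a skew fibration, so that the fiber $P_y$ through $(0,y)\in\R^k\times\R^q=\R^n$ is the graph of $t\mapsto B(y)t+y$. Lemma~\ref{lem:toplocal} with $E=\R^q$ gives at once that skewness is equivalent to $\Ker(A(x)-A(y))=0$ for all distinct $x,y$, the first bullet. For the second bullet, let $M\subset\affgrass$ be the set of fibers; by Lemma~\ref{lem:closed}, $M$ is topologically closed and any sequence $(u_n,v_n)\in M$ with no accumulation point in $M$ has $|v_n|\to\infty$. If $x_n\in\R^q$ has no accumulation point, then $\{P_{x_n}\}$ has none in $M$: otherwise a subsequence $P_{x_{n_j}}$ would converge in $M$, and since the distinguished $\R^q$ is transverse to every fiber, hence to the limiting plane, the intersection points $x_{n_j}$ would converge to the intersection of the limiting plane with $\R^q$, contradicting the choice of $x_n$. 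Writing $P_{x_n}=(u_n,v_n)$ we conclude $d(x_n)=|v_n|\to\infty$.

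For the ``if'' direction, suppose $B$ satisfies the two bullets and set $F\colon\R^q\times\R^k\to\R^n=\R^k\times\R^q$, $F(y,t)=(t,B(y)t+y)$. It suffices to prove $F$ a homeomorphism: then the planes $F(\{y\}\times\R^k)$ are exactly the prescribed oriented affine $k$-planes, and composing $F^{-1}$ with the projection $\R^q\times\R^k\to\R^q$ realizes them as a globally trivial fibration of $\R^n$. Injectivity is immediate from the first bullet, since $F(y_1,t_1)=F(y_2,t_2)$ forces $t_1=t_2=:t$ and $(A(y_1)-A(y_2))(t,1)=0$, hence $y_1=y_2$. For surjectivity it is enough, for each fixed $t$, that the continuous injection $g_t\colon\R^q\to\R^q$, $g_t(y)=B(y)t+y$, be onto: invariance of domain makes $g_t$ open, so its image is open, while the image is closed because if $g_t(y_n)\to w$ with $\{y_n\}$ having no accumulation point, the second bullet forces $d(y_n)\to\infty$, contradicting $d(y_n)\le|(t,g_t(y_n))|$ (the point $(t,g_t(y_n))$ lies on the $k$-plane $\{(s,B(y_n)s+y_n):s\in\R^k\}$, which is at distance $d(y_n)$ from the origin). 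Since $\R^q$ is connected, $g_t$ is a homeomorphism, so $F$ is a continuous bijection between $n$-manifolds, hence a homeomorphism by invariance of domain; the resulting fibration is skew, again by Lemma~\ref{lem:toplocal} and the first bullet.

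For the ``conversely'' sentence, start from a skew fibration in $\mathscr{F}_{tr}$, choose a copy of $\R^q$ transverse to every fiber, and replace it by its linear translate, which is still transverse to every fiber and now passes through the origin of $\R^n$; in linear coordinates $\R^n\cong\R^k\times\R^q$ identifying this plane with $\{0\}\times\R^q$, each fiber is transverse to $\{0\}\times\R^q$, hence is the graph of a unique affine map $\R^k\to\R^q$ meeting $\{0\}\times\R^q$ in a single point $(0,y)$; taking $B(y)$ to be the linear part of the affine map whose graph is the fiber through $(0,y)$ defines $B$ on all of $\R^q$ (the fibration covers $\{0\}\times\R^q$), and writing $B=\varphi^{-1}\circ\pi|_{\{0\}\times\R^q}$, where $\varphi$ is the Grassmann parametrization at $\R^k\times\{0\}$ whose image is the open set of $k$-planes transverse to $\{0\}\times\R^q$ and therefore contains every fiber's linear translate, shows $B$ continuous; the two bullets then hold by the ``only if'' direction. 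I expect the main obstacle to be the surjectivity step in the ``if'' direction---establishing that the planes defined by $B$ actually exhaust $\R^n$---since that is where the growth hypothesis, connectedness of $\R^q$, and invariance of domain must all be combined; by contrast the ``only if'' direction and the coordinate set-up for the last clause are routine given Lemmas~\ref{lem:toplocal} and~\ref{lem:closed}.
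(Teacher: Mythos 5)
Your proof is correct and follows essentially the same route as the paper: Lemma~\ref{lem:toplocal} for the first bullet, Lemma~\ref{lem:closed} for the growth condition, and for the converse direction the same map $(y,t)\mapsto(t,B(y)t+y)$ with injectivity from the kernel condition and surjectivity via an open-and-closed argument (invariance of domain for openness, the growth condition for closedness). The only difference is presentational: you phrase the second bullet directly rather than contrapositively, and you spell out the coordinate set-up for the final clause, which the paper dispatches by citing the definition of $\mathscr{F}_{tr}$.
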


\begin{proof}
Let $B \colon \R^q \to \Hom(\R^k, \R^q)$ be a continuous map which corresponds to a skew fibration $\pi \colon \R^n \to \grass$.  The first item is a consequence of  Lemma \ref{lem:toplocal}.  Let $x_n$ be a sequence in $\R^q$ and let $(u_n,v_n)$ be the fiber containing $x_n$, so that $d(x_n) = |v_n|$.  If $|v_n| \nrightarrow \infty$, then by Lemma \ref{lem:closed} there exists a convergent subsequence $(u_{n_j},v_{n_j})$ to some fiber $(u,v)$, which intersects $\R^q$ at some point $x$.   Since $(u_{n_j},v_{n_j}) \to (u,v)$, $x_{n_j} \to x$, so $x$ is an accumulation point of $x_n$.

Now let $B \colon \R^q \to \Hom(\R^k, \R^q)$ be a continuous map which satisfies both items.  The first item guarantees that the planes are skew.  We must show that the planes cover all of $\R^n$.   Let $\R^q_0 \subset \R^n$ be the domain of $B$.  As in the proof of Lemma \ref{lem:nonnon}, let $\R^q_{t} = \left\{t\right\} \times \R^q$ be the copy of $\R^q$ obtained by translating $\R^q_0$ by $t$.  The map $f \colon \R^q_0 \to \R^q_t \subset \R^n$, defined by $f(y) = (t,A(y)(t,1)) = (t,B(y)t + y)$, is a homeomorphism onto its image in $\R^{q}_{t}$.   Indeed, $f$ takes a point $y \in \R^q_0$ to the intersection point of the fiber through $y$ with $\R^q_t$, and hence is continuous and injective.  In particular, $f(\R^q_0)$ is open in $\R^q_t$. 

Now we show that $f(\R^q_0)$ is closed, since then it is equal to $\R^q_t$.  Let $z_n$ be a sequence in $f(\R^q_0)$ converging to a point $z$.  Let $y_n$ be the preimage of $z_n$ and $(u_n,v_n)$ the corresponding fiber.  Now $z_n$ lies on $(u_n,v_n)$ and $z_n \to z$,  so $|v_n|$ is a bounded sequence.  Thus by the second item, the sequence $y_n$ has a subsequence $y_{n_k}$ converging to $y$.  By continuity of $f$, $z_{n_k} = f(y_{n_k}) \to g(y)$.  But $z_{n_k}$ also approaches $z$, so $f(y) = z$.  Hence $f(\R^q_0)$ is closed.

Finally, we note that continuity of the map $F \colon \R^k \times \R^q \to \R^n \colon (t,y) \mapsto (t,A(y)(t,1))$ yields continuity of the induced map $\pi \colon \R^n \to \grass$, so that $B$ corresponds to a (continuous) fibration of $\R^n$.

The final statement of the theorem follows from the definition of $\mathscr{F}_{tr}$.
\end{proof}

\begin{thm}
\label{thm:nonclass}
A smooth map $B : \R^q \to \Hom(\R^k,\R^q)$ corresponds to a smooth nondegenerate fibration, in the manner described above, if and only if
\begin{itemize}
\item for every point $x \in \R^q$, $dA_x$ is a nonsingular bilinear map.
\item if $x_n$ is a sequence of points in $\R^q$ with no accumulation points, then $d(x_n) \to \infty$.
\end{itemize}
Conversely, any smooth nondegenerate fibration in $\mathscr{F}_{tr}$ can be described by such a $B$ on a suitable $\R^q \subset \R^n$. 
\end{thm}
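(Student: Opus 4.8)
The plan is to follow the proof of Theorem~\ref{thm:skewclass} closely, upgrading ``continuous'' to ``smooth,'' ``homeomorphism'' to ``diffeomorphism,'' and the pointwise skewness condition $\Ker(A(x)-A(y))=0$ to the first-order condition that each $dA_x$ be nonsingular. The one genuinely new ingredient is that injectivity of the comparison maps $f_t$ is no longer free (in the skew case it was immediate from the first bullet), and must be recovered together with surjectivity by a Hadamard-type global inversion argument in the spirit of the proof of Theorem~\ref{thm:globalclass}.

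For the forward direction, suppose $\pi$ is a smooth nondegenerate fibration in $\mathscr{F}_{tr}$, and fix a copy of $\R^q \subset \R^n$ transverse to all fibers. Since a fiber has dimension $k$ and $\R^q$ has dimension $n-k$, transversality forces $\R^q$ to be a linear complement of every fiber direction, so each fiber meets every translate of $\R^q$ in exactly one point and is the graph of an affine map as in Section~\ref{sec:global}; choosing coordinates $\R^n = \R^k \times \R^q$ with $\{0\}\times\R^q$ the chosen transversal yields a globally defined $B \colon \R^q \to \Hom(\R^k,\R^q)$, smooth because the fibration is. The first bullet then follows from Lemma~\ref{lem:nonnon}. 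For the second, Corollary~\ref{cor:globallyskew} (equivalently, Theorem~\ref{thm:lg}) shows the fibration is globally skew, so Lemma~\ref{lem:closed} applies: if $x_n \in \R^q$ has no accumulation point, the fibers $(u_n,v_n)$ through the $x_n$ can have no accumulation point in $M$ — otherwise a subsequence of $(u_n,v_n)$ would converge, and since the limiting fiber meets $\R^q$ transversely in a single point which varies continuously with the fiber, the $x_n$ would have a convergent subsequence — so $d(x_n) = |v_n| \to \infty$. This is the exact argument used in the proof of Theorem~\ref{thm:skewclass}.

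For the reverse direction, let $B$ satisfy both bullets. For each $t \in \R^k$ set $\R^q_t = \{t\}\times\R^q$ and define $f_t \colon \R^q \to \R^q_t$ by $f_t(y) = (t, B(y)t + y) = (t, A(y)(t,1))$, the point of the plane $P_y$ over $t$. Its differential is $df_{t,y}(\xi) = dA_y(\xi)(t,1)$, nonzero for nonzero $\xi$ by the first bullet (as $(t,1)\neq 0$); as domain and target both have dimension $q$, each $f_t$ is a local diffeomorphism. It is also proper: if $y_n$ has no accumulation point, the second bullet gives $d(y_n)\to\infty$, and since $f_t(y_n)$ lies on the fiber through $y_n$ we get $|f_t(y_n)|\geq d(f_t(y_n)) = d(y_n)\to\infty$. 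A proper local diffeomorphism onto the connected manifold $\R^q_t$ has open image (local homeomorphism) and closed image (proper map to a locally compact Hausdorff space is closed), hence is a surjective covering map; since $\R^q_t$ is simply connected and $\R^q$ is connected, $f_t$ is a diffeomorphism. Surjectivity of all the $f_t$ shows the planes $\{P_y\}$ cover $\R^n$, and injectivity shows they are pairwise disjoint: a common point $z$ of $P_{y_1}$ and $P_{y_2}$ lies over some $t$, so $f_t(y_1) = z = f_t(y_2)$ and $y_1 = y_2$. Finally, $F \colon \R^k\times\R^q \to \R^n$, $F(t,y) = (t, B(y)t+y)$, is smooth and bijective by the above, and a short computation of $dF_{(t,y)}$ with the first bullet shows it is a local diffeomorphism, hence a diffeomorphism; thus $F^{-1}$ is a global trivialization exhibiting the partition as a smooth fibration $\pi \colon \R^n \to U\subset\grass$, and through this trivialization the local data of $\pi$ near any point is read directly off $B$ and $A$, so Lemma~\ref{lem:nonnon} and the first bullet give nondegeneracy everywhere. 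The converse assertion of the theorem is immediate from the definition of $\mathscr{F}_{tr}$.

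I expect the main obstacle to be the covering-space step in the third paragraph: deducing, from only the \emph{infinitesimal} nonsingularity hypothesis together with the properness hypothesis, that the planes actually tile $\R^n$ bijectively — this is exactly where the second bullet is indispensable and where the argument departs from the skew case, in which injectivity came for free. A secondary bookkeeping point is the mild discrepancy between the fixed transversal $\R^q$ used to define $B$ and the orthogonal transversals $P^\perp$ used in the \emph{definition} of nondegeneracy; I would absorb this into an application of Lemma~\ref{lem:nonnon} via the global trivialization $F$, noting (as in the proof of that lemma) that the nonsingularity condition is a transversal-independent property of the fibration at a fiber.
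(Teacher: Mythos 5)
Your proof is correct and follows essentially the same route as the paper: the forward direction mirrors Theorem~\ref{thm:skewclass} with Lemma~\ref{lem:nonnon} replacing Lemma~\ref{lem:toplocal}, and the reverse direction hinges on each $f_t$ being a proper local diffeomorphism, hence a global diffeomorphism. The only cosmetic difference is that where the paper cites Hadamard's global inverse theorem, you reprove it via the standard proper-local-diffeomorphism-is-a-covering-map argument.
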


\begin{proof}
With Lemma \ref{lem:nonnon} used in place of Lemma \ref{lem:toplocal}, the forward direction is identical to that in the proof of Theorem \ref{thm:skewclass}.  The reverse direction is almost identical, with one important difference.

Suppose that $B$ satisfies the two bullet points, and consider the continuous map $f \colon \R^q_0 \to \R^q_t \subset \R^n$ defined in the proof of Theorem \ref{thm:skewclass}.  There, we knew that $f$ was injective due to the global skewness assumed in the first bullet point of Theorem \ref{thm:skewclass}, and we applied Invariance of Domain to see that $f$ was a homeomorphism onto its image.  Here, we know instead that $f$ is a local diffeomorphism, but it is not obvious that $f$ is a global diffeomorphism.

Thus we apply the same ``global inverse'' result of Hadamard as in Theorem \ref{thm:globalclass}: here $f$ is a local diffeomorphism, which is proper by the second bullet point, and therefore is a diffeomorphism onto its image.
\end{proof}

\begin{remark}
In the $(1,3)$ case, it is possible to arrange such that $B(0)=0$.  Geometrically, this means that we choose the transverse plane so that it is orthogonal to some fiber $u$, and we label the intersection of $u$ and $u^\perp$ as the origin of the fibered $\R^3$.  In the $(3,7)$ and $(7,15)$ cases, this is too much to expect: the $k$-plane orthogonal to any transverse $q$-plane may not even appear as a fiber, as shown in the following example.
\end{remark}

%\note{Choose coordinates such that $B(0)=0$, even with no $\perp$ plane?}  Yes, at least up to homotopy.

\begin{example}
\label{ex:noorth}
The standard Hopf fibration of $S^7$ by $S^3$ is induced by quaternionic multiplication.  One can choose any copy of $\R^4$ in $\R^7$ to serve as the transverse plane and define $B \colon \R^4 \to \operatorname{Hom}(\R^4,\R^3)$ by $y \mapsto (iy,jy,ky)$.  Then $\det(A(y)-A(z)) = \det(y-z,i(y-z),j(y-z),k(y-z)) = |y-z|^4$.  Now modify by defining $y \mapsto (iy,jy,ky+x)$ for some fixed nonzero vector $x \in \R^4$.  This does not change the determinant, but here the $0$ linear map is not in the image, so the $3$-plane orthogonal to $\R^4$ is not a fiber of this skew fibration.
\end{example}

\begin{example}
\label{ex:bilinear}
Let $A \colon \R^q \times \R^{k+1} \to \R^q$ be a nonsingular bilinear map. We construct a nondegenerate fibration of $\R^{k+q}$ using $A$.   We may think of $A$ as a $q$-dimensional linear subspace $L$ in the set of $q \times (k+1)$ matrices such that every matrix besides the zero matrix has rank $k+1$.   The idea is to choose a column to serve as the coordinate $y \in \R^q$, and use the remaining $k$ columns for the matrix $B$.

More precisely, consider the linear map sending a matrix in $L$ to its last column.  This is a linear injection, and hence an isomorphism, $\R^q \to \R^q$, since if two matrices share the last column, their difference is not full rank, hence by definition of $L$ their difference is the $0$ matrix, and they are equal.  Therefore a matrix in $L$ is uniquely determined by its last column $y \in \R^q$.  This induces a map $B$ sending $y$ to the first $k$ columns; that is, $y \mapsto B(y) \colon \R^q \to \Hom(\R^k, \R^q)$.   Then the matrix obtained by appending $y$ as the final column of $B(y)$ is precisely $A(y)$.

To see that the fibration is global, observe that a point $t = (t_1,t_2) \in \R^k \times \R^q = \R^{k+q}$ is covered by the fiber through $y \in \R^q$ if and only if $t_2 = B(y)t_1 + y$.  Since $B(y)t_1 + y = A(y)(t_1,1)$, such $y$ exists because the map $A(\cdot)(t_1,1)$ is a linear isomorphism.  Moreover, since this $y$ is unique, the fibration is skew.

Now $A \colon \R^q \to \Hom(\R^{k+1},\R^q)$ is linear, so nonsingularity of $dA \colon \R^q \to \Hom(\R^{k+1},\R^q)$ follows from that of $A$, and so the fibration is nondegenerate.
\end{example}

\section{Continuity at infinity}
\label{sec:contatinf}

One of the most important properties exhibited by skew fibrations is best described as \emph{continuity at infinity}.   Let $\pi \colon \R^n \to U \subset \grass$ be a skew fibration.   Let $L \subset S^{n-1}$ be the set consisting of all oriented directions which appear in fibers of the fibration.  That is, 
\[
L = \bigcup_{u \in U} u \cap S^{n-1}.
\] 
Let $\ell \in L$ be an oriented line contained in fiber $u$.  Informally, Continuity at Infinity asserts that, if $y_n \in \R^n$ is a diverging sequence of points with ``limiting direction'' $\frac{y_n}{|y_n|} = \ell$, then the corresponding $k$-plane fibers $u_n \coloneqq \pi(y_n) \in U$ approach $u$.

Besides its importance as a key feature of skew fibrations, the Continuity at Infinity property is essential for formalizing the correspondence between skew and sphere fibrations.   Consider a fibered $\R^n$ positioned as the tangent hyperplane at the north pole of $S^n$.  The inverse central projection induces a fibration of the open upper hemisphere by open great $k$-hemispheres.  It seems natural to ``complete'' these hemispheres and consider the resulting covering by great $k$-spheres, but the fact that these hemispheres extend continuously to the equator of $S^n$ is surprisingly nontrivial.  This fact is a reformulation of Theorem \ref{thm:continf}, which makes precise the notion of Continuity at Infinity. 

We require some additional terminology.  Recall that, for a skew fibration $\pi \colon \R^n \to U$, there is a natural embedding of the base space into $\R^n$, as the set $v(U)$ consisting of points which are nearest to the origin; that is, the skew fibration can be written as $v \circ \pi : \R^n \to v(U)$.   Let $\zeta$ represent the spherization of this bundle.   Then $\zeta$ is a topologically trivial bundle $v(U) \times S^{k-1}$ embedded in $R^n$: the fiber $S^{k-1}$ lying over a point $v(u)$ is the unit sphere in the $k$-dimensional fiber $(u,v(u))$, centered at $v(u)$.

Now consider the map $g \colon \zeta \to S^{n-1}$, defined by $g(x) = x - v(\pi(x))$.  This is the position vector of $x$ with respect to the basepoint $v(\pi(x))$.  For example, if we restrict $g$ to the unit sphere in a single fiber $(u,v(u))$, the image is the set of oriented directions $S^{n-1} \cap u$ contained in this fiber.  Now $g$ is a continuous injective map from the $(n-1)$-dimensional manifold $\zeta$ to $S^{n-1}$ and is therefore a homeomorphism onto its image $L$.

\begin{lem}
The function $\iota:L \to U$ sending an oriented line to its containing fiber is continuous.
\label{lem:contdir}
\end{lem}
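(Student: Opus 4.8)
The plan is to exhibit $\iota$ as a composition of maps already known to be continuous, using the homeomorphism $g \colon \zeta \to L$ constructed in the paragraph preceding the lemma. Concretely, I claim that, after regarding $\zeta$ as a subset of $\R^n$, one has the identity $\iota \circ g = \pi\big|_\zeta$. Granting this, $\iota = \pi\big|_\zeta \circ g^{-1}$ is continuous, since $\pi$ is continuous (a fibration is by definition a continuous map $\R^n \to U$) and $g^{-1} \colon L \to \zeta$ is continuous because $g$ is a homeomorphism onto $L$.

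Before anything else I would record that $\iota$ is well defined. A point $\ell \in L$ is an oriented unit vector lying in $u \cap S^{n-1}$ for some $u \in U$, and two distinct fibers cannot both contain $\ell$, since that would produce parallel (indeed, identically directed) lines in two fibers, contradicting skewness. Hence $\ell$ has a unique containing fiber, and $\iota$ makes sense.

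Next I would verify the factorization identity. Fix $x \in \zeta$ and set $u = \pi(x)$, so that $x$ lies on the unit sphere of the affine fiber $(u,v(u))$ centered at its nearest point $v(u)$. By construction $g(x) = x - v(\pi(x)) = x - v(u)$ is a unit vector, and since $x$ and $v(u)$ both lie on the affine plane $(u,v(u))$, which is a translate of the linear plane $u$, the vector $g(x)$ lies in $u \cap S^{n-1}$. Thus $g(x)$ is one of the oriented directions of the fiber $u$, and by the uniqueness just observed, $\iota(g(x)) = u = \pi(x)$. As this holds for every $x \in \zeta$, we get $\iota \circ g = \pi\big|_\zeta$, and inverting the homeomorphism $g$ gives $\iota = \pi\big|_\zeta \circ g^{-1}$, a composition of continuous maps.

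The only genuinely delicate point is the middle step: one must check that $g(x)$, manufactured as a vector in $\R^n$, actually lands in $L \subset S^{n-1}$, and that evaluating $\iota$ on it returns precisely $\pi(x)$; once skewness is invoked for uniqueness this is immediate, and no estimates or limiting arguments are needed. (Equivalently one could write $\iota = v^{-1} \circ \mathrm{pr} \circ g^{-1}$, where $\mathrm{pr} \colon \zeta \to v(U)$ is the bundle projection and $v^{-1} \colon v(U) \to U$ is the continuous inverse of $v$; but the factorization through $\pi$ is the most economical.)
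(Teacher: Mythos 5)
Your proof is correct and is essentially the paper's own argument: the paper disposes of the lemma in one line by writing $\iota$ as the composition of $g^{-1}$ with $\pi\big|_\zeta$ (your $\iota = \pi\big|_\zeta \circ g^{-1}$ is the correctly ordered version of what the paper writes), and your extra verification of well-definedness and of the identity $\iota \circ g = \pi\big|_\zeta$ just makes explicit what the paper leaves implicit.
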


\begin{proof} The function $\iota$ is equal to the composition $g^{-1} \circ \pi\big|_\zeta$.
\end{proof}

\begin{thm}[Continuity at Infinity]
\label{thm:continf}
Let $\pi \colon \R^n \to U$ be a skew fibration.  Let $\ell \in L$ and let $u = \iota(\ell)$. Let $y_n \in \R^n$ be a sequence of points and $(u_n,v_n)$ their containing fibers.  If $|y_n| \to \infty$ and $\frac{y_n}{|y_n|} \to \ell$, then $u_n \to u$.
\end{thm}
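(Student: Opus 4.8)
The plan is to argue by contradiction, leveraging the skewness hypothesis together with the compactness of the Grassmannian $\grass$. Suppose $|y_n| \to \infty$ and $\frac{y_n}{|y_n|} \to \ell$ but $u_n \not\to u$. Passing to a subsequence, I may assume $u_n \to u'$ for some $u' \in \grass$ with $u' \neq u$ (using compactness of $\grass$; a priori $u'$ need not lie in $U$, which is one of the subtleties). The goal is to show that $u'$ must in fact contain the oriented direction $\ell$ and coincide with a fiber, contradicting either $u' \neq u$ or the skew hypothesis.

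First I would set up coordinates adapted to the fiber $P = (u, v(u))$ through the relevant region, writing each fiber $(u_n, v_n)$ via its nearest point $v_n$ to the origin. The key geometric observation is this: the point $y_n$ lies on the affine plane $(u_n, v_n)$, so $y_n = v_n + t_n$ for some $t_n \in u_n$ (the linear plane). Dividing by $|y_n|$ and using $|y_n| \to \infty$, I want to extract information about the limiting direction. If $|v_n|$ stays bounded along a subsequence, then $\frac{v_n}{|y_n|} \to 0$, so $\frac{t_n}{|y_n|} \to \ell$; since $t_n \in u_n$ and $u_n \to u'$, the unit vector $\frac{t_n}{|t_n|}$ (which converges to $\ell$ because $|t_n|/|y_n| \to 1$) lies in the limit $u'$, i.e.\ $\ell \subset u'$. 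Then the oriented line $\ell$ would be contained in two distinct planes $u$ and $u'$ — but this alone is not yet a contradiction since $u'$ may not be a fiber. To close this case I would instead use Lemma \ref{lem:closed}: if $|v_n|$ is bounded, the fibers $(u_n,v_n)$ (after a further subsequence) converge in $\affgrass$ to a point $(u', v')$ which, since $M$ is closed, lies in $M$ and is therefore a genuine fiber containing the direction $\ell$; by skewness (no two fibers share a parallel line) this forces $(u',v') = (u, v(u))$, so $u' = u$, a contradiction.

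The main obstacle is the case where $|v_n| \to \infty$, i.e.\ the relevant fibers run off to infinity in $\affgrass$. Here the fibers $(u_n, v_n)$ have no limit as affine planes, so Lemma \ref{lem:closed} does not directly apply. The idea I would pursue is to rescale: consider $\frac{v_n}{|y_n|}$ and $\frac{t_n}{|y_n|}$, whose sum is $\frac{y_n}{|y_n|} \to \ell$. After passing to a subsequence, $\frac{v_n}{|y_n|} \to w$ for some vector $w$ (possibly zero, possibly not), and correspondingly $\frac{t_n}{|y_n|} \to \ell - w$, a vector lying in $u'$ since $u_n \to u'$. Since $v_n \perp u_n$ for each $n$, taking limits gives $w \perp u'$; thus $\ell = w + (\ell - w)$ is the orthogonal decomposition of $\ell$ with respect to $u'$, and in particular the orthogonal projection of $\ell$ onto $u'$ is the nonzero vector $\ell - w$ (nonzero because $|t_n|/|v_n| \to \infty$ would contradict... — this ratio needs care). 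I would then observe that the affine plane through $\ell$ parallel to $u'$, call it $P'$, is a limit of translates of the fibers $(u_n,v_n)$ in the following sense: every point of $P'$ is a limit of points lying on fibers of the fibration. Using the embedding $\alpha \colon \affgrass \to \grassy$ and Lemma \ref{lem:affinebadcone}, together with compactness of $\grassy$, one can pass to a limit $(k+1)$-plane and deduce that $P'$ is itself a fiber (here Continuity-type closedness in $\grassy$ replaces Lemma \ref{lem:closed}), so that by skewness $P' = (u, v(u))$ and hence $u' = u$, again a contradiction. Packaging this limiting argument cleanly — in particular ruling out degenerate scalings and confirming the limit object is a bona fide fiber rather than an extraneous plane "at infinity" — is where the real work lies, and it is precisely the point where the hypothesis that $\pi$ is a fibration of all of $\R^n$ (so that $P'$, being a limit of covered points, must be covered) is indispensable.
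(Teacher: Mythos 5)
Your Case 1, where $|v_n|$ stays bounded along a subsequence, is correct: compactness plus Lemma \ref{lem:closed} produces a genuine limit fiber containing the direction $\ell$, and skewness forces it to equal $(u,v(u))$. The gap is in Case 2, $|v_n|\to\infty$, which is the entire content of the theorem, and your proposed resolution does not work. The clause ``$P'$, being a limit of covered points, must be covered'' is vacuous --- every point of $\R^n$ lies on a fiber, so being covered says nothing about whether the affine plane $P'$ is itself a fiber, and in general it is not. Passing to $\grassy$ does not help in the way you suggest either: when $|v_n|\to\infty$, the planes $\alpha(u_n,v_n)$ are spanned by $u_n$ and $(v_n,1)$, so their limit is a $(k+1)$-plane contained in $\R^n\times\{0\}$; it lies outside the image of $\alpha$ and corresponds to no affine $k$-plane. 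This is exactly the ``extraneous plane at infinity'' you flag, and showing that such a limit plane is disjoint from $L$ (equivalently, that Case 2 cannot occur when $\ell\in L$) is Lemma \ref{lem:pnonintersect} of the paper, which is deduced \emph{from} Theorem \ref{thm:continf}. So there is no non-circular ``closedness in $\grassy$'' statement available to you here, and nothing in your sketch rules out the fibers through $y_n$ escaping to infinity while their directions drift to some $u'\neq u$.

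The paper closes this case with an argument of a genuinely different flavor: a covering (degree) argument rather than a compactness/limiting one. Using continuity of $\iota$ (Lemma \ref{lem:contdir}), one fixes a small closed ball $B\subset L$ about $\ell$ with $\iota(B)\subset D$ for a prescribed closed neighborhood $D$ of $u$ in $U$, and parametrizes the union of the fibers with directions in $B$ by an explicit map $f(\ell',t)=v(\iota(\ell'))+(\text{scalar})\,\ell'$ normalized so that $\langle f(\ell',t),\ell\rangle=t$. A uniform estimate shows that for $t\geq N'$ the image of $\partial B$ under $p\circ f(\cdot,t)$ encloses the ball $B_r(\ell)$, whence $f(B,t)$ covers the full cross-section of a solid cone $\Cee_{N',r}$ about the ray in direction $\ell$. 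Since each point lies on a unique fiber, every point of that cone has its fiber direction in $\iota(B)\subset D$; as your sequence $y_n$ eventually enters the cone, $u_n\to u$, and in particular $|v_n|=|v(u_n)|$ stays bounded, so Case 2 never arises. Some such quantitative surjectivity-onto-cross-sections step is unavoidable, and it is precisely what your proposal is missing.
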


\begin{proof}
It is convenient to introduce the following cone for $N \geq 1$ and $0 < \delta < \frac{\pi}{2}$:
\begin{align}
\label{eqn:conedef}
\Cee_{N,\delta} \coloneqq \left\{ y \in \R^n : \langle y, \ell \rangle \geq N, \ \operatorname{dist}_{S^{n-1}} \left( \frac{y}{|y|} , \ell \right) \leq \delta \right\}.
\end{align}
Then by the hypotheses of the theorem, each $\Cee_{N,\delta}$ contains all but a finite number of the $y_n$.  

\begin{claim}
For any closed neighborhood $D \subset U$ of $u$, there exist $N, \delta$ such that $\pi(\Cee_{N,\delta}) \subset D$.
\end{claim}

A proof of the claim establishes that all points inside $\Cee_{N,\delta}$ have fibers from $D$, and since each $\Cee_{N,\delta}$ contains all but finitely many $y_n$, $D$ contains all but finitely many $u_n$, hence $u_n \to u$.  Therefore the claim implies the theorem.

To prove the claim, we need only formalize the following geometric idea: the foliated neighborhood $\pi^{-1}(D)$, which surrounds the fiber $(u,v)$, eventually consumes some $\Cee_{N,\delta}$.  This is depicted in Figure \ref{fig:cone}.

\begin{figure}[h!t]
\centerline{
\includegraphics[width=4in]{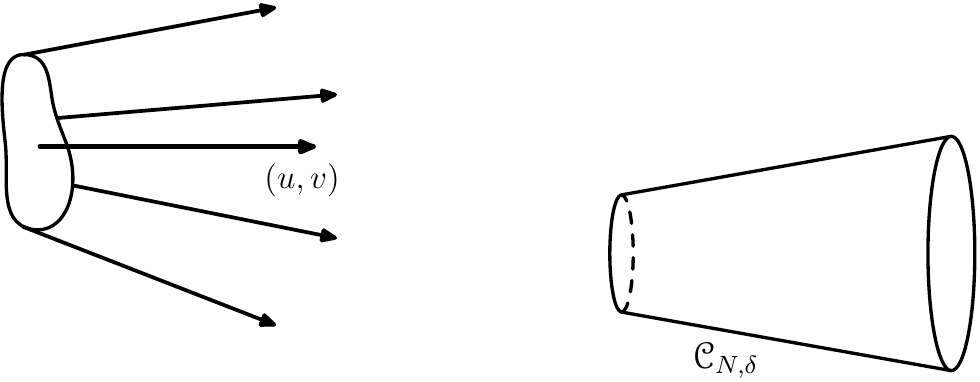}
}
\caption{The set of points with fibers from $D$ eventually consumes some $\Cee_{N,\delta}$}
\label{fig:cone}
\end{figure}

For any closed neighborhood $D \subset U$ of $u$, continuity of $\iota:L \to U$ (Lemma \ref{lem:contdir}) implies that the preimage $\iota^{-1}(D)$ is a closed neighborhood of $\ell \in L$, and therefore contains a closed ball $B \coloneqq B_{2r}(\ell)$ for some $2r <\pi/2$.  Let $u' \coloneqq \iota(\ell')$ for $\ell' \in B$.  We define the map
$$f: B \times \R \longrightarrow \R^n : (\ell',t) \longmapsto v(u') + \frac{- \langle v(u'), \ell \rangle + t}{\langle \ell', \ell \rangle} \ell'.$$
Note that the function $f$ is the linear combination of $v(u')$ with some scalar multiple of $\ell'$, and therefore for fixed $\ell'$, the image of $f$ is contained within the fiber $u'$.  In general, this yields the containment: $\operatorname{Image}(f) \subset \pi^{-1}(D)$.  Thus to show the claim, it suffices to show that $\Cee_{N,\delta} \subset \operatorname{Image}(f)$ for some $N, \delta$.  For this purpose, the function $(\ell',t) \longmapsto v(u') + t\ell'$ would work equally well, but $f$ is more convenient because the quantity
\begin{align}
\label{eqn:fjust}
\langle f(\ell',t) , \ell \rangle = t
\end{align}
is independent of $\ell'$.  Thus for fixed $t$, $f(B,t)$ is contained in the hyperplane $P_t$ which is orthogonal to $\ell$ and distance $t$ from the origin.  We will use this fact momentarily, but first let us make one more observation.  Let $p:\R^n \setminus \left\{ 0 \right\} \to S^{n-1}$ be the normalization $y \mapsto \frac{y}{|y|}$.  For fixed $\ell'$, $f$ parametrizes a line with direction $\ell'$, therefore
$$\lim_{t \to \infty} p \big(f(\ell',t)\big) = \lim_{t \to \infty} \frac{f(\ell',t)}{|f(\ell',t)|} = \ell',$$
and since $B$ is compact, the limit is uniform: there exists $N' \geq 1$ such that 
\begin{align}
\label{eqn:distapprox}
\operatorname{dist}_{S^{n-1}}\left( p \big(f(\ell',t)\big), \ell' \right) < r \hspace{.1in} \mbox{for all } \ell' \in B \mbox{ and } t \geq N'.
\end{align}
We now use equations (\ref{eqn:fjust}) and (\ref{eqn:distapprox}) to show the following claim, which will complete the proof of the theorem.

\begin{claim}
$\Cee_{N',r} \subset \operatorname{Image}(f)$.
\end{claim}

Consider the plane $P_t$ defined above.  To show the claim, it suffices to show that for all $t \geq N'$, there is containment in each cross-section:
\begin{align*}
P_t \cap \Cee_{N',r} \hspace{.05in} \subset \hspace{.05in} P_t \cap \operatorname{Image}(f) = f(B,t),
\end{align*}
where the equality follows from (\ref{eqn:fjust}).

For $t \geq N' \geq 1$, the restriction $p \big|_{P_t}$ is a homeomorphism, so we can instead show the following inclusion for all $t \geq N'$:
\begin{align*}
B_r(\ell) = p \big( P_t \cap \Cee_{N',r} \big) \hspace{.05in} \subset \hspace{.05in} p \big( f(B,t) \big),
\end{align*}
where the equality follows from the definition (\ref{eqn:conedef}).

To show that $B_r(\ell) \subset p \big( f(B,t) \big)$, it is enough to show that $p\big(f(\partial B, t)\big)$ encloses $B_r(\ell)$.  This follows from applying (\ref{eqn:distapprox}) to $\ell' \in \partial B = \partial B_{2r}(\ell)$.  Indeed, for such points $\ell'$, $p \big(f(\ell',t)\big)$ is within distance $r$ of $\ell'$, and hence more than distance $r$ from $\ell$. 
\end{proof}

The idea of the proof also yields the following useful statement, which is an immediate consequence of the first claim in the proof.  Consider a $k$-plane $u$ in $\R^n$ and a skew fibration of some open neighborhood of $u$.  Then for any line $\ell$ contained in $u$, this open foliated neighborhood contains some cone $\Cee_{N,\delta}$.  Therefore, any line in $\R^n$ with direction $\ell$ will intersect the foliated neighborhood.  We have shown the following.

\begin{namedthm}[\ref{thm:lg}]
If a fibration of $\R^n$ by copies of $\R^k$ is locally skew, then it is globally skew.
\end{namedthm}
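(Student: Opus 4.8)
The plan is to obtain Theorem~\ref{thm:lg} as a direct corollary of the geometric core of Continuity at Infinity. The key point is that the first claim inside the proof of Theorem~\ref{thm:continf} --- that a skew fibration of an open neighborhood of a fiber $P$ must engulf a full cone $\Cee_{N,\delta}$ about any oriented direction $\ell$ contained in $P$ --- is entirely local around $P$: the auxiliary map $f$ used there only ever samples fibers whose directions are close to $\ell$, hence (by continuity of $\iota$) fibers close to $P$, and the compactness invoked in the estimate~(\ref{eqn:distapprox}) is only that of a small ball of directions. So that claim, equivalently the ``useful statement'' recorded just above it --- that every line of direction $\ell$ eventually enters the foliated neighborhood of $P$ --- applies verbatim to a merely locally skew fibration.

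Granting this, I would argue by contradiction. Suppose $\pi \colon \R^n \to \grass$ is locally skew but not globally skew: there are distinct fibers $P_1 \neq P_2$ and a unit vector $\ell$ such that $P_1$ contains a line $\ell_1$ and $P_2$ contains a line $\ell_2$, both in the direction $\ell$. Local skewness produces an open, fiber-saturated neighborhood $T$ of the whole $k$-plane $P_1$ on which $\pi$ restricts to a genuine skew fibration --- namely the foliated tube over a small neighborhood in $P_1^\perp$ of the point of $P_1$ nearest the origin. Since $\ell_1 \subset P_1 \subset T$, the useful statement applies to $\pi|_T$ in the direction $\ell$, so every line of direction $\ell$ meets $T$; in particular $\ell_2$ meets $T$, say at $y \in \ell_2 \cap T$.

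But then $P_1$ is the $\pi$-fiber through any point of $\ell_1 \subset T$ and $P_2$ is the $\pi$-fiber through $y \in T$, so $P_1$ and $P_2$ both occur as fibers of the skew fibration $\pi|_T$ --- yet they contain the parallel lines $\ell_1, \ell_2$, contradicting skewness of $\pi|_T$. Hence $\pi$ is globally skew. I do not anticipate a genuine obstacle: all the analysis already lives in the proof of Theorem~\ref{thm:continf}, and the only point needing a word of care is the passage from ``locally skew at a point'' to ``skew on a saturated open neighborhood of the whole fiber $P_1$'', which is immediate from the local vector-bundle trivialization $E \times \R^k \to \R^n$ of Section~\ref{sec:generalfibrations}.
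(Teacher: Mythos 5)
Your proposal is correct and is essentially the paper's own argument: the paper likewise derives Theorem \ref{thm:lg} from the first claim in the proof of Theorem \ref{thm:continf}, observing that the foliated neighborhood of a fiber in a locally skew fibration engulfs a cone $\Cee_{N,\delta}$ about each direction $\ell$ contained in that fiber, so any parallel line elsewhere would eventually enter that neighborhood and contradict its skewness. You merely spell out the final contradiction (and the passage to a saturated skew-fibered tube about the whole fiber) that the paper leaves implicit.
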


Local skewness for nondegenerate fibrations was shown in Section \ref{sec:nondegenerate}, and therefore we have the following.

\begin{cor}
\label{cor:globallyskew}
A nondegenerate fibration of $\R^n$ is skew.
\end{cor}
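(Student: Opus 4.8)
The plan is simply to assemble two facts already established. The first is the observation from Section~\ref{sec:nondegenerate} that every nondegenerate fibration is \emph{locally} skew. Indeed, if $x \in \R^n$ lies on the fiber $P = (u,v)$, nondegeneracy says that $\pi\big|_{P^\perp}$ is an immersion transverse to $\bee_{\pi(x)}$ at $x$; since transversality to the bad cone is an open condition, it persists on a neighborhood $E \subset P^\perp$ of $x$, and $E$ may be shrunk so that $\pi\big|_{P^\perp}$ maps $E$ injectively into the complement of $\bigcup_{y \in E} \bee_{\pi(y)}$. Then no two fibers meeting $E$ share a parallel line, so every point of $\R^n$ has a foliated neighborhood on which the fibration is skew.

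The second ingredient is Theorem~\ref{thm:lg}, which upgrades local skewness to global skewness for any fibration of $\R^n$ by copies of $\R^k$. Feeding the local skewness just recalled into that theorem gives at once that the fibration is skew, which is the assertion of the corollary.

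I expect no residual obstacle: all the content lives in Theorem~\ref{thm:lg}, whose proof rests on Continuity at Infinity (Theorem~\ref{thm:continf}) and, more precisely, on the consequence extracted from the first claim of its proof --- that a skew foliated neighborhood of a $k$-plane $u$ eventually swallows an entire cone $\Cee_{N,\delta}$ about any line direction $\ell \subset u$, so every line of $\R^n$ with direction $\ell$ must enter that neighborhood and there witness the local nonparallelism. It is worth stressing that one cannot shortcut the argument infinitesimally: Lemma~\ref{lem:nonnon} characterizes nondegeneracy at a point by nonsingularity of $dA$, a purely local datum that gives no control over fibers which are far apart, and it is exactly the cone argument that bridges to the global statement. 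One might be tempted to instead deduce the corollary from Theorem~\ref{thm:globalclass} via Lemma~\ref{lem:affinebadcone}, but that route is circular, since the proof of Theorem~\ref{thm:globalclass} already invokes the present corollary.
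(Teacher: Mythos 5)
Your proof is correct and follows exactly the paper's own route: local skewness of nondegenerate fibrations (established in Section~\ref{sec:nondegenerate}) combined with Theorem~\ref{thm:lg}, whose content is the Continuity at Infinity cone argument. Your added remark about avoiding circularity with Theorem~\ref{thm:globalclass} is also accurate, since that theorem's proof cites this corollary.
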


\section{Skew and sphere fibrations}
\label{sec:skewsphere}

Any fibration of $S^n$ by oriented great spheres $S^k$ induces a skew fibration by central projection.   Here we study when this process can be reversed, including whether continuity and smoothness are preserved.

\subsection{From skew to spherical fibrations}
\label{sec:skewspheretop}

Let $\pi \colon \R^n \to U$ be a skew fibration, and position the fibered $\R^n$ as the tangent hyperplane of $S^n \subset \R^{n+1}$, say at the north pole $(0,\dots,0,1)$.  The inverse central projection induces a fibration of the open upper hemisphere $S^n_+$ by open great $k$-hemispheres.  We would like to extend each hemisphere to a great sphere.  
This ultimately results in a covering of the open upper and lower hemispheres $S^n_+$ and $S^n_-$, as well as the subset $L \subset S^{n-1}$ of the equator consisting of those lines which appear in fibers of the skew fibration.   To see this, observe that the projection and subsequent extension of an affine fiber $(u,v)$ results in the great $k$-sphere $S_{(u,v)} = S^n \cap \alpha(u,v)$, where the embedding $\alpha \colon \affgrass \to \grassy$ was introduced in Section \ref{sec:grassmann}.   We define
\[
W = S^n_+ \cup S^n_- \cup L = \bigcup_{u \in U} S_{(u,v(u))}
\]
to be the set of points covered in this manner.

\begin{lem}
\label{lem:Wfibration}
Let $\pi \colon \R^n \to U$ be a skew fibration, let $v \colon U \to \xi_U \subset \affgrass$ be the induced section of the canonical bundle over $\grass$, and let $p \colon W \to \alpha(v(U))$ be the map sending a point of $W$ to its containing $(k+1)$-plane as described above.  Then $p$ is a great sphere fibration of $W$.
\end{lem}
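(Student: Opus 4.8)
The plan is to verify the three defining features of a great sphere fibration of $W$: that $p$ is a well-defined surjection, that it is continuous, and that it is locally trivial with fibre $S^k$ (the fibres being \emph{oriented} great $k$-spheres is then automatic, since $p^{-1}(\alpha(u,v(u)))=S_{(u,v(u))}=\alpha(u,v(u))\cap S^n$ and $\alpha$ transports the orientation of $(u,v(u))\in\affgrass$ to $\alpha(u,v(u))$).

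First I would check that the great spheres $S_{(u,v(u))}$, $u\in U$, are pairwise disjoint, so that $p$ is well defined. If $u_1\neq u_2$ in $U$, the affine fibres $(u_1,v(u_1))$ and $(u_2,v(u_2))$ are skew, so Lemma \ref{lem:affinebadcone} gives $\alpha(u_2,v(u_2))\notin\bee_{\alpha(u_1,v(u_1))}$; that is, the $(k+1)$-planes $\alpha(u_1,v(u_1))$ and $\alpha(u_2,v(u_2))$ meet only at the origin of $\R^{n+1}$, and intersecting with $S^n$ shows $S_{(u_1,v(u_1))}\cap S_{(u_2,v(u_2))}=\emptyset$. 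Since $W$ is by definition the union of these great spheres, $p\colon W\to\alpha(v(U))$ is a well-defined surjection whose fibres are the great $k$-spheres $S_{(u,v(u))}$.

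Next I would prove continuity of $p$. Writing $v$ also for the section $U\to\affgrass$, $u\mapsto(u,v(u))$, and $\kappa\colon S^n_+\to\R^n$ for the central projection, one has $p|_{S^n_+}=\alpha\circ v\circ\pi\circ\kappa$, a composite of continuous maps; continuity of $p|_{S^n_-}$ follows from $p|_{S^n_-}=p|_{S^n_+}\circ a$, where $a$ is the antipodal map (legitimate because $\alpha(u,v(u))$ is a linear subspace, so $p(-w)=p(w)$); and $p|_L=\alpha\circ v\circ\iota$ is continuous by Lemma \ref{lem:contdir}. Since $S^n_+\cup S^n_-$ is open in $W$ and $L=W\cap S^{n-1}$ is closed, it remains only to check continuity at a point $\ell\in L$: given $w_n\to\ell$ in $W$, split into subsequences lying in $S^n_+$, in $S^n_-$, and in $L$. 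On the $L$-subsequence continuity is immediate from Lemma \ref{lem:contdir}; on the $S^n_+$-subsequence, an elementary computation with the central projection shows that $y_n:=\kappa(w_n)$ satisfies $|y_n|\to\infty$ and $y_n/|y_n|\to\ell$, so Continuity at Infinity (Theorem \ref{thm:continf}) gives $\pi(y_n)\to\iota(\ell)$, whence (using continuity of $v$ and of $\alpha$) $p(w_n)=\alpha(v(\pi(y_n)))\to\alpha(v(\iota(\ell)))=p(\ell)$; the $S^n_-$-subsequence is handled by applying $a$. I expect this to be the main obstacle --- it is precisely here that the work of Section \ref{sec:contatinf} is indispensable, since without it the completed hemispheres might fail to vary continuously across the equator.

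Finally, for local triviality I would identify $W$ with the restriction to $\alpha(v(U))\subset\grassy$ of the tautological great-$k$-sphere bundle over $\grassy$ --- the unit sphere bundle $\widetilde W:=\{(\Lambda,x):\Lambda\in\alpha(v(U)),\ x\in\Lambda,\ |x|=1\}$ of the tautological $(k+1)$-plane bundle, with projection $(\Lambda,x)\mapsto\Lambda$. The map $W\to\widetilde W$, $w\mapsto(p(w),w)$, is a continuous bijection (continuous by the previous step; bijective because the great spheres are disjoint and exhaust $W$) whose inverse $(\Lambda,x)\mapsto x$ is continuous, hence a homeomorphism carrying $p$ to the projection of $\widetilde W$. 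As the tautological $S^k$-bundle is locally trivial over $\grassy$, so is its restriction $\widetilde W$ over the submanifold $\alpha(v(U))$; therefore $p\colon W\to\alpha(v(U))$ is a locally trivial bundle with fibre $S^k$ and great $k$-sphere fibres, i.e.\ a great sphere fibration of $W$.
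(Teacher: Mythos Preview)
Your proof is correct and follows essentially the same route as the paper: the heart of the argument is continuity of $p$ at points of $L$, which you establish exactly as the paper does by decomposing into the $S^n_+$, $S^n_-$, and $L$ pieces and invoking Theorem~\ref{thm:continf} (Continuity at Infinity) for sequences approaching $\ell\in L$ from the open hemispheres. You are in fact more thorough than the paper, which treats well-definedness as implicit and does not spell out local triviality; your explicit use of Lemma~\ref{lem:affinebadcone} for disjointness and your identification with the tautological $S^k$-bundle over $\grassy$ are clean additions.
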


\begin{proof}
On the upper hemisphere $S^n_+$, the map is given by $p\big|_{S^n_+} = \alpha \circ v \circ \pi \circ c$, where $c$ is the central projection.   The restriction $p\big|_{S^n_-}$ is similar, just composed with the antipodal map.  Thus it remains to show continuity on $L \subset S^{n-1}$.

Let $w_n \to \ell \in L$.   For points $w_n \in L$, convergence of the images follows from \ref{lem:contdir}.  For $w_n \in S^n_+$, let $y_n = c(w_n)$.   Convergence of $w_n \to \ell$ has two consequences: $\frac{y_n}{|y_n|} \to \ell$ and $|y_n| \to \infty$.  These are precisely the hypotheses of Theorem \ref{thm:continf}, so convergence of the images follows from Continuity at Infinity.
\end{proof}

A surprising consequence is that great $k$-spheres obtained as ``limits'' of spheres in the fibration $p \colon W \to \alpha(v(U))$ are disjoint from $W$.

\begin{lem}
\label{lem:pnonintersect}
Let $\pi \colon \R^n \to U$ be a skew fibration, and let $(u_n,v_n)$ be a sequence of fibers satisfying $u_n \to \partial U$.  Let $S = \lim S_{(u_n,v_n)}$, passing to a convergent subsequence if necessary.  Then $S \subset S^{n-1}$ is disjoint from $L$.
\end{lem}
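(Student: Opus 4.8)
The goal is to show that any great $k$-sphere $S$ arising as a limit $S = \lim S_{(u_n,v_n)}$ with $u_n \to \partial U$ lies entirely inside the equator $S^{n-1}$ and misses $L$. The plan is to argue by contradiction: suppose $S$ meets $L$, or more generally suppose $S$ is not contained in the equator. I would first analyze the two ways $S$ could fail to avoid $L$. Since each $S_{(u_n,v_n)} = S^n \cap \alpha(u_n,v_n)$ and $|v_n| \to \infty$ by Lemma \ref{lem:closed} (recall $u_n \to \partial U$ forces $|v_n| \to \infty$), the affine planes $(u_n, v_n)$ are running off to infinity, so the limiting $(k+1)$-plane $\alpha(u_n,v_n) \to \alpha_\infty$ must be a ``linearization at infinity'': I expect $\alpha_\infty$ to lie in the hyperplane $\R^n \times \{0\} \subset \R^{n+1}$, i.e. $\alpha_\infty \in G_{k+1}(n)$ rather than genuinely meeting $\R^n \times \{1\}$. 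This is the geometric heart of the matter and is why $S = S^n \cap \alpha_\infty \subset S^{n-1}$.

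The key steps, in order: (1) Pass to a convergent subsequence so that $u_n \to u_\infty \in \grass$ (using compactness of $\grass$) and $\alpha(u_n,v_n) \to \alpha_\infty \in \grassy$ (using compactness of $\grassy$), with $S = S^n \cap \alpha_\infty$. (2) Show $\alpha_\infty \subset \R^n \times \{0\}$: since $v_n \in u_n^\perp$ with $|v_n| \to \infty$, write $\alpha(u_n,v_n)$ as the span of $u_n$ (sitting in $\R^n \times \{0\}$) together with the vector $(v_n, 1)$; after normalizing, $(v_n,1)/|(v_n,1)| \to (w, 0)$ for some unit $w \in \R^n$ (in fact $w$ is a limiting direction of $v_n/|v_n|$, itself orthogonal to the limit of $u_n$), so every vector of $\alpha_\infty$ has vanishing last coordinate. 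Hence $\alpha_\infty \in G_{k+1}(n)$ and $S = S^n \cap \alpha_\infty \subset S^n \cap (\R^n \times \{0\}) = S^{n-1}$. (3) Now suppose for contradiction $\ell \in S \cap L$, i.e. $\ell$ is an oriented line (point of $S^{n-1}$) that lies in $S$ and also appears in some fiber $u = \iota(\ell) \in U$. The line $\ell \subset \alpha_\infty$ means the $(k+1)$-planes $\alpha(u_n,v_n)$ contain directions converging to $\ell$; I would combine this with Continuity at Infinity (Theorem \ref{thm:continf}) or with Lemma \ref{lem:contdir}'s continuity of $\iota$ to deduce that $u_n \to u \in U$, contradicting $u_n \to \partial U$. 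Concretely, pick points $y_n$ on the affine fiber $(u_n,v_n)$ whose normalized position vectors $y_n/|y_n|$ converge to $\ell$ and with $|y_n| \to \infty$ — such $y_n$ exist precisely because $\ell$ is a limiting direction of the planes $\alpha(u_n,v_n)$ and $|v_n|\to\infty$ — and then Theorem \ref{thm:continf} applied along these $y_n$ gives $u_n \to \iota(\ell) = u \in U$, the desired contradiction.

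The main obstacle I anticipate is Step (3): producing the sequence $y_n \in (u_n, v_n)$ with $|y_n| \to \infty$ and $y_n/|y_n| \to \ell$ in a way that genuinely applies the hypotheses of Theorem \ref{thm:continf}. The subtlety is bookkeeping between three notions of convergence — convergence of the linear parts $u_n$ in $\grass$, convergence of the linearized planes $\alpha(u_n,v_n)$ in $\grassy$, and the actual Euclidean geometry of where the affine planes sit — and ensuring that a direction $\ell$ lying in the limit plane $\alpha_\infty$ really is realized as an honest limit of position vectors of points on the affine planes $(u_n,v_n)$ rather than merely of translation directions. Writing each point of $(u_n,v_n)$ as $v_n + (\text{vector in } u_n)$ and choosing the $u_n$-component to push toward the direction $\ell$ while controlling norms should make this precise; once the approximating sequence $y_n$ is in hand, Theorem \ref{thm:continf} closes the argument immediately. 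A secondary (easier) point is handling the case where $S$ might a priori meet $L$ ``tangentially at infinity'' — but since we have already shown $S \subset S^{n-1}$, any intersection point of $S$ with $L$ is a genuine common oriented line, so no separate argument is needed there.
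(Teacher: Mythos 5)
Your proposal is correct and follows essentially the same route as the paper: both establish $S \subset S^{n-1}$ from $|v_n|\to\infty$ and then derive a contradiction from Continuity at Infinity when $\ell \in S \cap L$. The only organizational difference is that the paper applies Lemma \ref{lem:Wfibration} directly to a sequence $w_n \in S_{(u_n,v_n)} \subset W$ converging to $\ell$ (concluding $S_{(u_n,v_n)} \to S_{(u',v')}$, which cannot equal the equatorial sphere $S$), which packages the continuity on both the hemispheres and on $L$ and thereby sidesteps the hands-on construction of the sequence $y_n$ and the case/orientation bookkeeping that your step (3) requires.
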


\begin{proof} Since $u_n \to \partial U$, $|v_n| \to \infty$ (Lemma \ref{lem:closed}), so $S = \lim S_{(u_n,v_n)}$ is indeed a subset of the equator $S^{n-1}$.  We must show that $S$ does not intersect $L$.

Assume for contradiction that there exists $\ell' \in L \cap S$.  Since $\ell' \in L$, there is a corresponding flat fiber $(u',v') \coloneqq (\iota(\ell'), v(\iota(\ell')))$ and hence a great sphere $S_{(u',v')}$.  On the other hand, since $\ell' \in S = \lim S_{(u_n,v_n)}$, there exist $w_n \in S_{(u_n,v_n)}$ with $w_n \to \ell'$.  By Lemma \ref{lem:Wfibration}, $w_n \to \ell'$ implies that $S_n \to S_{(u',v')}$.  Therefore $S_{(u',v')} = S$.  This is a contradiction, since $S$ lies entirely in the equator while $S_{(u',v')}$ does not.
\end{proof}

\begin{namedthm}[\ref{thm:tf}]
Given a fibration of $\R^n$ by skew oriented affine copies of $\R^k$, there exists a copy of $\R^{k+1}$ transverse to all fibers.
\end{namedthm}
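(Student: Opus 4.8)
The plan is to translate the transversality statement into a statement about directions on the equatorial sphere, and then obtain the desired $(k+1)$-plane as a limit of great-sphere fibers ``at infinity,'' feeding the previous two lemmas.

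The first step is a reformulation. A linear $(k+1)$-plane $V \subset \R^n$ is transverse to every fiber of the skew fibration $\pi\colon\R^n \to U$ if and only if the great $k$-sphere $V \cap S^{n-1}$ is disjoint from the set $L = \bigcup_{u\in U} u \cap S^{n-1}$ of oriented directions occurring in fibers. Indeed, transversality to an affine fiber $(u,v(u))$ means $V \cap u = \{0\}$; any nonzero vector of $V\cap u$ normalizes to a point of $V \cap S^{n-1} \cap L$, and conversely. So it suffices to produce a single great $k$-sphere in $S^{n-1}$ avoiding $L$, equivalently a $(k+1)$-plane $V$ with $V\cap S^{n-1} \cap L = \emptyset$.

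Next I would observe that $U$ is noncompact. By Lemma \ref{lem:opencontractible}, $U$ is a connected, contractible submanifold of $\grass$ without boundary, of positive dimension $n-k$, and a nonempty manifold of this type cannot be compact (its top $\Z/2$-homology would be nonzero, while contractibility forces it to vanish). Hence there is a sequence of fibers $(u_n,v_n)\in M$ whose first coordinates $u_n$ have no accumulation point in $U$; by Lemma \ref{lem:closed}, $|v_n|\to\infty$. Passing to a subsequence, the great $k$-spheres $S_{(u_n,v_n)} = S^n \cap \alpha(u_n,v_n)$ converge in the compact Grassmannian $\grassy$ to a great $k$-sphere $S$. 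The only genuine computation — and the step I expect to be the main (though modest) obstacle — is to verify that $|v_n|\to\infty$ forces $S$ to lie entirely in the equator $S^{n-1}$, so that $S = V \cap S^{n-1}$ for a bona fide $(k+1)$-dimensional linear subspace $V\subset \R^n\times\{0\}$: concretely, $\alpha(u_n,v_n)$ is spanned by $(v_n,1)$ together with $u_n\times\{0\}$, so normalizing $(v_n,1)/\sqrt{|v_n|^2+1}$ and passing to a further subsequence with $v_n/|v_n|\to w$ and $u_n\to u_\infty$ gives $\alpha(u_n,v_n)\to \Span(\{w\}\cup u_\infty)\times\{0\}$, and $w\perp u_\infty$ keeps this limit $(k+1)$-dimensional and equatorial.

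Finally, since $u_n \to \partial U$, Lemma \ref{lem:pnonintersect} applies directly to give that $S$ is disjoint from $L$. By the reformulation of the first step, the associated $(k+1)$-plane $V$ is transverse to all fibers, which completes the proof. (When $k=0$ any line serves, so one may assume $k\ge 1$ throughout.)
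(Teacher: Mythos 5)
Your proof is correct and follows essentially the same route as the paper's, which derives the theorem directly from Lemma \ref{lem:pnonintersect} by obtaining a limiting great $k$-sphere in the equator disjoint from $L$ and identifying it with a transverse $(k+1)$-plane. You have merely made explicit several details the paper leaves implicit: the noncompactness of $U$ producing a sequence $u_n \to \partial U$, the computation showing the limiting sphere is equatorial (which the paper places inside the proof of Lemma \ref{lem:pnonintersect}), and the reformulation of transversality in terms of directions on $S^{n-1}$.
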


Recall the convention here that two affine subspaces are $\R^n$ are transverse if their linear translates intersect only at the origin (there is no requirement that the planes span $\R^n$).

\begin{proof}
By Lemma \ref{lem:pnonintersect}, there exists a great $k$-sphere in $S^{n-1}$ which does not intersect $L$.  This corresponds to a $(k+1)$-plane which is transverse to all fibers.
\end{proof}

It follows that the base space of a fibration of $\R^{2k+1}$ by skew oriented affine $k$-planes must be homeomorphic to $\R^{k+1}$.  Thus the base spaces of any two such fibrations are homeomorphic, and such fibrations are topologically trivial, so we obtain the following result.

\begin{namedcor}[\ref{cor:quesa}]
Given a continuous (resp.\ smooth) fibration of $\R^{2k+1}$ by skew oriented affine copies of $\R^k$, there exists a fiber-preserving homeomorphism (resp.\ diffeomorphism) of $\R^{2k+1}$ taking the fibration to a Hopf fibration.
\end{namedcor}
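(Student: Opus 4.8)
The plan is to leverage Theorem \ref{thm:tf} together with the local vector bundle description established in Section \ref{sec:global}. By Theorem \ref{thm:tf}, a skew fibration of $\R^{2k+1}$ by affine $k$-planes admits a transverse copy of $\R^{k+1} = \R^q$ (here $q = n-k = k+1$), so the fibration lies in $\mathscr{F}_{tr}$ and is completely encoded by a map $B \colon \R^{k+1} \to \Hom(\R^k,\R^{k+1})$ as in Theorems \ref{thm:skewclass} and \ref{thm:nonclass}. In this situation the map $f \colon \R^{k+1}_0 \to \R^{k+1}_t$, $y \mapsto (t, B(y)t + y)$, is a homeomorphism (resp.\ diffeomorphism) onto all of $\R^{k+1}_t$ for every $t \in \R^k$; equivalently, the map $F \colon \R^k \times \R^{k+1} \to \R^{2k+1}$, $(t,y) \mapsto (t, B(y)t+y)$ is a homeomorphism (resp.\ diffeomorphism), and it carries the trivial product foliation $\{\, \R^k \times \{y\} : y \in \R^{k+1}\,\}$ to the given fibration. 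Thus the given fibration is fiber-preservingly homeomorphic (resp.\ diffeomorphic) to the product fibration $\R^k \times \{y\}$ of $\R^{2k+1} = \R^k \times \R^{k+1}$.

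Next I would observe that the standard Hopf fibration of $\R^{2k+1}$ by skew $k$-planes — the central projection of the Hopf fibration of $S^{2k+1}$, which exists exactly when $k \in \{1,3,7\}$ — is itself in $\mathscr{F}_{tr}$ (for instance, it is described by a linear map $B$ arising from complex, quaternionic, or octonionic multiplication as in Example \ref{ex:noorth} and Example \ref{ex:bilinear}), so it too is fiber-preservingly homeomorphic (resp.\ diffeomorphic) to the product fibration via its own trivializing map $F_{\text{Hopf}}$. Composing, the map $F_{\text{Hopf}} \circ F^{-1} \colon \R^{2k+1} \to \R^{2k+1}$ is a homeomorphism (resp.\ diffeomorphism) taking the given fibration to the Hopf fibration.

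The one point requiring care is that $F$ is genuinely a homeomorphism (resp.\ diffeomorphism) of all of $\R^{2k+1}$, not merely a continuous (resp.\ smooth) bijection. In the continuous case this is exactly the content of the argument in the proof of Theorem \ref{thm:skewclass}: each $f_t$ is a continuous injection of $\R^{k+1}$ into $\R^{k+1}_t$, hence open by Invariance of Domain, and the distance-to-origin estimate (the second bullet of Theorem \ref{thm:skewclass}, which holds here because the base space is homeomorphic to $\R^{k+1}$) shows its image is also closed, hence all of $\R^{k+1}_t$; running this over all $t$ and invoking continuity of $F$ and of $F^{-1} = (p|_{\R^{k+1}_t})^{-1} \circ (\text{coordinate projections})$ gives the homeomorphism. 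In the smooth case one replaces Invariance of Domain with Hadamard's global inverse function theorem exactly as in Theorems \ref{thm:globalclass} and \ref{thm:nonclass}: $F$ is a local diffeomorphism (nondegeneracy, via Lemma \ref{lem:nonnon}) which is proper (again by the distance estimate), hence a diffeomorphism.

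The main obstacle is purely bookkeeping: making precise that the homeomorphism/diffeomorphism $F$ is \emph{fiber-preserving} with respect to the two fibration structures, i.e.\ that $F$ intertwines the projection $\R^{2k+1} \to \R^{k+1}$ of the trivial fibration with the projection $\pi$ (or the nearest-point map $p$) of the given fibration. This is immediate from the construction of $F$ — by design $F(\R^k \times \{y\})$ is the affine $k$-plane of the fibration meeting $\R^{k+1}_0$ at $y$ — but it should be stated explicitly, since ``topologically trivial'' and ``fiber-preservingly equivalent to the product'' are a priori different assertions, and it is the latter, combined with the same fact for the Hopf fibration, that yields the corollary.
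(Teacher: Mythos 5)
Your proposal is correct and follows essentially the same route as the paper: the paper's entire proof is the observation, immediately preceding the corollary, that Theorem \ref{thm:tf} forces the base space to be homeomorphic to $\R^{k+1}$, so that any such fibration is a topologically trivial bundle over $\R^{k+1}$; your explicit trivialization $F(t,y)=(t,B(y)t+y)$ and the composition $F_{\mathrm{Hopf}}\circ F^{-1}$ are exactly what ``topologically trivial'' unwinds to, and your insistence on recording fiber-preservation is a worthwhile elaboration.

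One correction is needed in the smooth case. The corollary assumes only that the fibration is smooth and skew, \emph{not} nondegenerate, so you cannot invoke nondegeneracy and Lemma \ref{lem:nonnon} to conclude that $F$ is a local diffeomorphism. The fact you actually need is supplied by the remark following Lemma \ref{lem:nonnon}: for a merely smooth fibration each $f_t=(p|_{E_t})^{-1}\circ(p|_E)$ is still a diffeomorphism, whence $dA_y(\xi)(t,1)\neq 0$ for all nonzero $\xi$ without any nondegeneracy hypothesis. Since $dF_{(t,y)}(\tau,\eta)=(\tau,\,B(y)\tau+dA_y(\eta)(t,1))$, this nonvanishing is precisely what makes $dF_{(t,y)}$ injective, hence $F$ a local diffeomorphism between spaces of equal dimension; your properness/Hadamard step then finishes the argument unchanged. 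With that citation repaired, the proof is complete.
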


\begin{thm}
\label{thm:equivalent}
Let $\pi \colon \R^{2k+1} \to U$ be a fibration by skew oriented affine $k$-planes.  The following are equivalent:
\begin{compactenum}
\item $\pi$ corresponds via central projection to an oriented great $k$-sphere fibration of $S^{2k+1}$.
\item The set $L \subset S^{2k}$ of all oriented lines appearing in fibers of $\pi$ is the complement of a great $k$-sphere.
\item There is a unique linear subspace $\R^{k+1} \subset \R^{2k+1}$ transverse to all fibers.
\item Any map $B \colon \R^{k+1} \to \Hom(\R^k,\R^{k+1})$ corresponding to $\pi$, considered as a map $\R^{k+1} \times \R^k \to \R^{k+1}$, $(y,t) \mapsto B(y)t$, has the property that $B(\cdot)t$ is surjective for every fixed nonzero $t$.
\end{compactenum}
\end{thm}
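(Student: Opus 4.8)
The plan is to establish the cycle $(1)\Rightarrow(2)\Rightarrow(3)\Rightarrow(4)\Rightarrow(2)$ together with $(2)\Rightarrow(1)$, so all four conditions are equivalent. The basic dictionary I will use throughout: writing $L^c\coloneqq S^{2k}\setminus L$ (a closed set, since $L=g(\zeta)$ is open by Invariance of Domain), a $(k+1)$-plane $P'$ is transverse to all fibers if and only if $P'\cap S^{2k}$ is a great $k$-sphere contained in $L^c$ (the correspondence is $P'\mapsto P'\cap S^{2k}$ with inverse $\Sigma'\mapsto\Span(\Sigma')$; nontriviality of a linear intersection is detected on the unit sphere). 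In particular condition (3) is equivalent to the statement ``$L^c$ contains exactly one great $k$-sphere.'' Theorem \ref{thm:tf} guarantees $L^c\neq\varnothing$.

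\textbf{(1)$\Leftrightarrow$(2).} If $\pi$ is the central projection of a great $k$-sphere fibration $F$ of $S^{2k+1}$, then a fiber of $F$ that meets the open upper hemisphere does so in an open great $k$-hemisphere projecting to a fiber of $\pi$, while a fiber of $F$ that misses the open upper hemisphere must lie entirely in the equator $S^{2k}$ (a $(k+1)$-plane not contained in $\R^{2k+1}\times\{0\}$ contains a vector with nonzero last coordinate, hence meets both open hemispheres). The latter fibers are pairwise disjoint great $k$-spheres covering $S^{2k}\setminus L=L^c$; but any two $(k+1)$-dimensional subspaces of $\R^{2k+1}$ intersect nontrivially, so there is at most one such fiber, hence (using $L^c\neq\varnothing$) exactly one, giving $L^c=\Sigma$ for a great $k$-sphere $\Sigma$. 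Conversely, given (2), adjoin $\Sigma$ as a single fiber to the great sphere fibration $p\colon W\to\alpha(v(U))$ of Lemma \ref{lem:Wfibration}; since $L^c=\Sigma$ we get $W\cup\Sigma=S^{2k+1}$ with pairwise disjoint great $k$-sphere fibers, and continuity across the equatorial seam follows from Theorem \ref{thm:continf} together with Lemma \ref{lem:pnonintersect} (any subsequential limit of the $S_{(u_n,v_n)}$ with $u_n\to\partial U$ is a great $k$-sphere inside $L^c=\Sigma$, hence equals $\Sigma$). One checks this is a bona fide great sphere fibration, the base being the one-point compactification $S^{k+1}$ of $\alpha(v(U))\cong\R^{k+1}$.

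\textbf{(2)$\Rightarrow$(3) and (4)$\Rightarrow$(2).} The first is immediate from the dictionary. For $(4)\Rightarrow(2)$, fix a transverse $\R^{k+1}=P$ (Theorem \ref{thm:tf}) with a corresponding $B$, and use coordinates $\R^{2k+1}=\R^k\times P$ in which the fiber through $y\in P$ has direction plane $\{(t,B(y)t):t\in\R^k\}$. A unit vector $(a,b)$ lies in $L$ iff $b=B(y)a$ for some $y$; when $a=0$ this is impossible, and when $a\neq 0$ condition (4) makes it solvable, so $L^c\cap S^{2k}=\{0\}\times(P\cap S^{2k})$ is a great $k$-sphere, which is (2).

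\textbf{(3)$\Rightarrow$(4): the crux.} Let $P$ be the unique transverse plane, $\Sigma=P\cap S^{2k}$, and $B$ a corresponding map in coordinates $\R^{2k+1}=\R^k\times P$; fix nonzero $t$ and set $g(y)\coloneqq B(y)t\colon P\to P$. Skewness applied to $(t,0)\in\Ker(A(x)-A(y))$ (first bullet of Theorem \ref{thm:skewclass}, cf.\ Lemma \ref{lem:toplocal}) shows $g$ is injective, hence open by Invariance of Domain; thus it suffices to show $g$ is proper, for then $g$ is a homeomorphism onto $P$, i.e.\ $B(\cdot)t$ is surjective. If $g$ were not proper there would be $y_n\in P$ with $|y_n|\to\infty$ but $B(y_n)t$ bounded. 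Since $y_n$ has no accumulation point, $d(y_n)\to\infty$ (second bullet of Theorem \ref{thm:skewclass}), so the fibers $(u_n,v_n)$ through $y_n$ satisfy $u_n\to\partial U$; by Lemma \ref{lem:pnonintersect} the great $k$-spheres $S_{(u_n,v_n)}$ subconverge to a great $k$-sphere inside $L^c$, which by (3) is $\Sigma$. Hence $\alpha(u_n,v_n)\to\Span_{\R^{2k+2}}(\Sigma)=P\times\{0\}$, and since $u_n=\alpha(u_n,v_n)\cap(\R^{2k+1}\times\{0\})$ is always $k$-dimensional, $u_n$ subconverges to a $k$-dimensional subspace of $P$. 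But $u_n$ contains $\widehat{(t,B(y_n)t)}$, which (as $t\neq0$ and $B(y_n)t$ is bounded) subconverges to a unit vector with nonzero $\R^k$-component, whereas every vector of $P=\{0\}\times P$ has zero $\R^k$-component --- a contradiction. This proves properness and closes the cycle.

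\textbf{Main obstacle.} The serious step is $(3)\Rightarrow(4)$: upgrading the bare uniqueness hypothesis of (3) to the global surjectivity of (4) requires combining skewness (for injectivity of $B(\cdot)t$) with Continuity at Infinity and Lemma \ref{lem:pnonintersect} (for properness), and one must control the degeneration of the direction planes $u_n$ as $\alpha(u_n,v_n)$ collapses into the equator. A minor technical point is the verification in $(2)\Rightarrow(1)$ that the completed partition is genuinely a fiber bundle over $S^{k+1}$ rather than merely a continuous decomposition into great $k$-spheres.
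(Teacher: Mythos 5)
Your proposal is correct; all five implications go through, and the key inputs (Lemma \ref{lem:Wfibration}, Lemma \ref{lem:pnonintersect}, Theorem \ref{thm:continf}, Theorem \ref{thm:tf}, and the two bullets of Theorem \ref{thm:skewclass}) are the same ones the paper leans on. The logical routing differs in two places, though, and both differences are worth noting. First, for $(1)\Rightarrow(2)$ the paper shows the equator contains \emph{at least} one fiber by ruling out the alternative: if no fiber lay in $S^{2k}$, all fibers would meet it transversely and induce a fibration of $S^{2k}$ by great $(k-1)$-spheres, which is excluded for $k=3,7$ by the classification of admissible dimensions and for $k=1$ by a base-space argument. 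You instead observe that the equatorial fibers must cover $L^c$, that $L^c\neq\varnothing$ by Theorem \ref{thm:tf}, and that two $(k+1)$-planes in $\R^{2k+1}$ always meet, so there is exactly one equatorial fiber; this is more elementary and uniform in $k$. Second, the paper never proves $(3)\Rightarrow(4)$ directly: it closes the equivalence via $(3)\Rightarrow(2)$ (contrapositive: a direction $\ell\notin S\cup L$ produces, via Lemma \ref{lem:pnonintersect}, a second transverse $(k+1)$-plane containing $\ell$) and then $(2)\Rightarrow(4)$. Your injective--proper--open argument for $B(\cdot)t$ is a genuinely different and slightly longer path to the same place; the paper's contrapositive is shorter, but your version has the merit of exhibiting $B(\cdot)t$ as a homeomorphism of $\R^{k+1}$, which is more information than bare surjectivity. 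Your handling of $(2)\Leftrightarrow(1)$ and $(2)\Rightarrow(3)$, $(4)\Rightarrow(2)$ matches the paper, including the (shared, and equally unelaborated) point that the completed decomposition in $(2)\Rightarrow(1)$ is verified only at the level of continuity of the fiber map.
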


\begin{proof}

We begin with the simpler implications.

``$(2) \Longleftrightarrow (3)$":
If $L = S^{n-1} \setminus S$, then $S$ consists of the only directions not contained in the fibration, hence it represents a unique transverse $\R^{k+1}$.  Conversely,  let $P = \R^{k+1}$ be transverse to all fibers,  and let $S = S^{n-1} \cap P$.   We will show the contrapositive.  Suppose that there exists $\ell \in S^{n-1} \setminus (S \cup L)$.  Then there exists a sequence of points $w_n \in S^n_+$ converging to $\ell$.  Corresponding to $w_n$ there exist $k$-planes $(u_n,v_n)$ with (some subsequence of) $u_n$ converging to $\partial U$.  Then by Lemma \ref{lem:pnonintersect}, $\lim S_{(u_n,v_n)}$ corresponds to a copy of $\R^{k+1}$ transverse to all fibers.   Since $\ell \in \lim S_{(u_n,v_n)}$ and $\ell \notin S$, this $(k+1)$-plane is distinct from $P$.

``$(2) \Longleftrightarrow (4)$": 
Assume $L = S^{n-1} \setminus S$.  Then there is a unique transverse $\R^{k+1}$, and any map $B$ which globally represents $\pi$ must be defined on this plane.  For fixed $y$ and fixed nonzero $t$, the ordered pair $(t,B(y)t) \in \R^n$ corresponds to a line contained in the fiber through $y$, hence some element of $S^{n-1} \setminus S$.   Since $L =  S^{n-1} \setminus S$, every possible $(t,\cdot)$ appears, hence $B(\cdot)t$ is surjective.  Conversely, if $B(\cdot)t$ is surjective for all nonzero $t$, then all elements of $S^{n-1} \setminus S$ appear as directions in $\pi$, where here $S$ is the $k$-sphere corresponding to the $(k+1)$-plane on which $B$ is defined.

``$(1) \Longrightarrow (2)$":  
Let $\pi \colon \R^{2k+1} \to U$ be a skew fibration of $\R^{2k+1}$ which corresponds via central projection to a great $k$-sphere fibration of $S^{2k+1}$.   It suffices to show that every equator $S^{2k}$ contains exactly one fiber.  There cannot be two fibers, since two great $k$-spheres intersect in $S^{2k}$.  If there is no such fiber, then every great sphere intersects $S^{2k}$ transversely.  For $k=3$ or $k=7$, this yields a great sphere fibration of $S^{2k}$ by great $(k-1)$-spheres, which cannot exist.  For $k=1$, this yields a fibration of $S^3$ by great circles with base space $\R P^2$, impossible since the base space of any such fibration is homeomorphic to $S^2$.

``$(2) \Longrightarrow (1)$": 
Let $\pi \colon \R^{2k+1} \to U$ be a skew fibration of $\R^{2k+1}$, and assume the set of directions $L \subset S^{2k}$ is the complement of some $k$-sphere $S$.  By Lemma \ref{lem:Wfibration}, the inverse central projection gives a fibration of $S^n \setminus S$ by the great $k$-spheres $S_{u,v}$.  We must show that adding $S$ results in a continuous great sphere fibration of $S^n$:

\begin{claim}
If $w_n \in W$ is a sequence of points converging to some point $w \in S$, then the sequence of great spheres $S_n$ through $w_n$ converges to $S$.
\end{claim}

Corresponding to $w_n$ there exist fibers $(u_n,v_n)$ (that is, $S_n = S^{2k} \cap \alpha(u_n,v_n)$) with $u_n$ converging to $\partial U$.   Therefore by Lemma \ref{lem:pnonintersect} there is a corresponding limiting sphere which is disjoint from $L$.  Since $S$ is the only great $k$-sphere disjoint from $L$, it must be the limit of $S_n$.
\end{proof}

\subsection{Nondegenerate and sphere fibrations}

We continue the investigation in the smooth category, beginning with a cautionary reminder. Previous errors in the theory of great sphere fibrations have concerned surprising issues of smoothness.   Yang studied the relationship between division algebras and fibrations and incorrectly claimed smoothness of fibrations constructed in terms of certain division algebras.  %In the current language, the incorrect statement would have asserted that if a smooth nondegenerate fibration is given by a nonsingular bilinear map, then in addition to Continuity at Infinity we have Differentiability at Infinity (and hence the resulting sphere fibration is smooth).   WRONG! These are regular algebras but not division algebras.
Grundh{\" o}fer and H{\" a}hl \cite{GrundhoferHahl} corrected this mistake as follows:

\begin{thm}[Grundh{\" o}fer and H{\" a}hl \cite{GrundhoferHahl}]
\label{thm:gh}
The fibration determined by a real division algebra $D$ of finite dimension is a differentiable locally trivial fiber bundle if and only if $D$ is isomorphic to $\R$, $\C$, $\Ham$, or $\Oct$; that is, if and only if the resulting great sphere fibration is Hopf.
\end{thm}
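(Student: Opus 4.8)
The plan is to localize the obstruction to a single fiber and then convert ``differentiability of the bundle there'' into a rigid algebraic condition on $D$. Write $d=\dim_\R D$; by the theorems of Bott--Milnor and Kervaire $d\in\{1,2,4,8\}$, but at this stage $D$ is only a nonsingular bilinear product, not assumed unital or alternative. The fibration is the partition of $S^{2d-1}=S(D\oplus D)$ by the great spheres $S^{2d-1}\cap L_m$, where $L_m\subset D\oplus D$ is the graph of the right multiplication $R_m\colon y\mapsto ym$ for $m\in D$, together with $L_\infty:=D\oplus 0$; these partition $S^{2d-1}$ because $D$ is a division algebra. Its base space is the $d$-dimensional set $M=\{L_m:m\in D\}\cup\{L_\infty\}\subset G_d(2d)$, and by the Gluck--Warner--Yang criterion recalled above the fibration is a differentiable bundle exactly when $M$ is a smooth submanifold transverse to every bad cone $\bee_u$.

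The easy direction: for $D\in\{\R,\C,\Ham,\Oct\}$ these algebras are alternative, so $R_m^{-1}=R_{m^{-1}}$, the construction is the classical Hopf fibration, and that is a smooth bundle. For the converse I would first note that $M$ is automatically a smooth submanifold transverse to $\bee_u$ at every \emph{finite} fiber $L_{m_0}$: bilinearity makes $m\mapsto R_m$ linear, so near $L_{m_0}$ the set $M$ is a linearly embedded chart, and by Lemma \ref{lem:transversenonsingular} the relevant differential is, up to the canonical identifications, the multiplication of $D$ itself, which is nonsingular precisely because $D$ is a division algebra. So the only possible failure is at $L_\infty$. In the standard Grassmann chart $\Hom(D,D)\hookrightarrow G_d(2d)$ centered at $L_\infty$ (graphs over the first factor) one computes $L_m\leftrightarrow R_m^{-1}$ and $L_\infty\leftrightarrow 0$; hence near $L_\infty$ the set $M$ is a neighbourhood of the origin in $X:=\{R_m^{-1}:m\in D\setminus\{0\}\}\cup\{0\}$, which, since $R_{\lambda m}=\lambda R_m$, is a cone. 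A cone that is a $C^1$ submanifold at its vertex must be a linear subspace (along any ray the graph of $M$ over its tangent plane at $0$ is both positively homogeneous of degree $1$ and has vanishing derivative at $0$, hence vanishes). Thus the fibration is a differentiable bundle if and only if $X$ is a $d$-dimensional linear subspace of $\Hom(D,D)$; transversality at $L_\infty$ is then free, a linear subspace of invertible-or-zero operators being transverse to the bad cone by Lemma \ref{lem:transversenonsingular}.

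It remains to show that $X=\{R_m^{-1}:m\neq0\}\cup\{0\}$ is a linear subspace if and only if $D$ is, up to isomorphism (after passing to the canonical unital representative of its isotopy class), one of $\R,\C,\Ham,\Oct$ --- equivalently, the fibration is Hopf. One direction is the easy case above (there $X=\{R_m:m\in D\}$). For the other, write $W:=\{R_m:m\in D\}$ for the $d$-dimensional Property P subspace attached to $D$; the hypothesis says $W$ and $X=W^{-1}$ are mutually reciprocal $d$-planes of invertible-or-zero operators. From this self-reciprocity I would extract a positive definite quadratic form on $D$ that is multiplicative in the appropriate sense: since $m\mapsto\det R_m$ and $A\mapsto\det A$ are degree-$d$ forms on $W$ and $W^{-1}$ related by $\det(R_m^{-1})=(\det R_m)^{-1}$, linearity of $X$ forces $\det R_m=q(m)^{d/2}$ for a quadratic form $q$, and the reciprocity upgrades $q$ to a composition form; then $D$ is a composition (normed) division algebra and Hurwitz's theorem finishes. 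A cleaner alternative is to recognize the cosets $\{L_m+v\}$ as the lines of a compact $2d$-dimensional topological affine plane coordinatized by $D$, observe that differentiability of our bundle is equivalent to this plane being a smooth geometry, and invoke the classification of smooth compact projective planes, forcing it to be the classical Moufang plane over $\R,\C,\Ham$, or $\Oct$.

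The main obstacle is this last step --- turning ``$X=W^{-1}$ is linear'' into an identity forcing $D$ to be a composition algebra. This is where the surprising ``smoothness implies associativity'' phenomenon is cashed out, and where care is needed with the distinction between isomorphism and isotopy: the family $\{L_m\}$, hence the fibration, depends on $D$ only through its isotopy class, so the cleanest true statement is that differentiability is equivalent to $D$ being isotopic to a composition algebra; matching this to the literal ``$D$ isomorphic to $\R,\C,\Ham$, or $\Oct$'' uses that a unital real division algebra isotopic to a composition algebra is isomorphic to it. Everything before this step --- reduction to the fiber at infinity and the cone rigidity --- is routine given the machinery already in place.
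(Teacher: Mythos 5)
First, a point of calibration: the paper does not prove Theorem \ref{thm:gh} at all --- it is quoted from Grundh\"ofer and H\"ahl --- so your proposal is being measured against the literature rather than against an argument in this text. With that said, your geometric reduction is correct and is essentially the known route. The finite fibers $L_m$ sit in a single linear (hence real-analytic) chart, transverse to the bad cones by Lemma \ref{lem:transversenonsingular} precisely because $D$ has no zero divisors, so the entire differentiability question localizes to the fiber at infinity; in the Grassmann chart there the base space becomes the cone $X=\{R_m^{-1}: m\neq 0\}\cup\{0\}$, and your cone-rigidity observation (a cone that is $C^1$ at its vertex must equal its tangent plane there) correctly converts differentiability into the statement that $X$ is a $d$-dimensional linear subspace of invertible-or-zero operators, after which transversality at infinity is indeed free. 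Your remark that the invariant conclusion is ``$D$ is isotopic to a composition algebra'' rather than literally isomorphic to one is also well taken: the family $\{L_m\}$ only sees the isotopy class, and the literal ``isomorphic'' in the quoted statement is a paraphrase. (Minor slip: the fiber at infinity is $0\oplus D$, not $D\oplus 0$; the latter is $L_0$, and the relevant chart consists of graphs over the second factor.)

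The genuine gap is the step you yourself flag: deducing from linearity of $X$ that $D$ is classical. As written this is not a proof but a pointer at two possible proofs, neither of which is carried out. The assertion that linearity of $X$ ``forces $\det R_m=q(m)^{d/2}$ for a quadratic form $q$'' is unsupported --- the fact that the rational map $R_m\mapsto R_m^{-1}=\operatorname{adj}(R_m)/\det(R_m)$ takes values in a $d$-plane does not obviously constrain the factorization of the degree-$d$ form $\det R_m$, and even granting such a factorization you would still owe the multiplicativity of $q$ before Hurwitz's theorem applies. The alternative route through the classification of smooth compact translation planes outsources exactly this difficulty to a theorem of comparable depth and provenance. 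Since everything before this point is soft --- it shows only that differentiability is equivalent to a linearity condition on $X$, a condition that holds trivially for $\R$, $\C$, $\Ham$, $\Oct$ by alternativity --- the entire content of the ``only if'' direction lives in the missing step. What you have is an accurate reduction plus the easy direction, not a proof of the theorem.
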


Gluck, Warner, and Yang \cite{GluckWarnerYang} established that a smooth, closed, connected submanifold $M \subset \grassy$ is the base space of a smooth fibration of $S^n$ by great sphere copies of $S^k$ if and only if for all $P \in M$, $M$ is transverse to the bad cone $B_P$ (see also \cite[Proposition 1]{CahnGluckNuchi}).

In Lemma \ref{lem:Wfibration} we showed that if $W \subset S^{n}$ is the subset of $S^n$ obtained by extending the open hemispheres which result from the inverse central projection of a skew fibration, then $W \to \alpha(v(U))$ is a great sphere fibration.  Thus the following statement follows immediately from Theorem \ref{thm:globalclass} and the classification of great sphere fibrations above:

\begin{prop}
Let $\pi \colon \R^n \to U$ be a skew fibration and $p \colon W \to \alpha(v(U))$ the corresponding great sphere fibration of $W \subset S^n$.  Then $\pi$ is a smooth nondegenerate fibration if and only if $p$ is a smooth fibration.
\end{prop}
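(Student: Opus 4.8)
The plan is to deduce the proposition by combining Theorem \ref{thm:globalclass} with the characterization of smooth great sphere fibrations due to Gluck, Warner, and Yang \cite{GluckWarnerYang}. Write $M \coloneqq v(U) \subset \affgrass$ for the set of affine fibers of $\pi$, so that $\alpha(M) = \alpha(v(U))$ is precisely the base space of $p$, and recall that $\alpha \colon \affgrass \to \grassy$ is a smooth embedding. Because $\pi$ is given to be a skew fibration, two of the three conditions in Theorem \ref{thm:globalclass}(b) hold automatically: $M$ is topologically closed in $\affgrass$ by Lemma \ref{lem:closed}, and $M \cong U$ is connected since $U$ is the connected base space of $\pi$ (Lemma \ref{lem:opencontractible}). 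Hence Theorem \ref{thm:globalclass}, applied with this $M$, says that \emph{$\pi$ is a smooth nondegenerate fibration if and only if $M$ is a smooth $(n-k)$-dimensional submanifold of $\affgrass$ with $\alpha(M)$ transverse to every bad cone} (a fibration of $\R^n$ is determined by its set of fibers, so the nondegenerate fibration produced by the theorem is $\pi$ itself).

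On the other side, the criterion of Gluck, Warner, and Yang says that a subset of $\grassy$ is the base space of a smooth fibration by great $k$-spheres exactly when it is a smooth submanifold transverse to the bad cone at each of its points. Applied to the base space $\alpha(M)$ of $p$, this says that \emph{$p$ is a smooth fibration if and only if $\alpha(M)$ is a smooth submanifold of $\grassy$ transverse to every bad cone}. Since $\alpha$ is a diffeomorphism onto its image, ``$M$ is a smooth $(n-k)$-dimensional submanifold'' is equivalent to ``$\alpha(M)$ is a smooth submanifold of $\grassy$,'' and transversality of $\alpha(M)$ to the bad cones is literally the same condition in both statements. So the two italicized equivalences have the same right-hand side, and the proposition follows.

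The step I expect to require the most care is the use of the criterion of Gluck, Warner, and Yang, because as ordinarily stated it concerns great sphere fibrations of the \emph{entire} sphere $S^n$, whereas $p$ fibers only the open subset $W = S^n_+ \cup S^n_- \cup L$ of Lemma \ref{lem:Wfibration}. The resolution is that the implications used are local in $\grassy$: transversality of $\alpha(M)$ to $\bee_P$ at a point $P$ is equivalent to smoothness of the family of great $k$-spheres in a neighborhood of the fiber $P$, irrespective of whether that family extends to a fibration of all of $S^n$, so the criterion applies verbatim over $W$. Concretely, for the direction ``$\pi$ smooth nondegenerate $\Rightarrow$ $p$ smooth,'' smoothness of $p$ on $S^n_+$ and $S^n_-$ is immediate from the formula $p\big|_{S^n_+} = \alpha \circ v \circ \pi \circ c$ (with $c$ the central projection) and its lower-hemisphere analogue, so the genuine content is smoothness of $p$ across the equatorial set $L$, and that is exactly what transversality to the bad cone controls; for the converse, running the same formula backwards gives $\pi = \rho \circ \alpha^{-1} \circ p\big|_{S^n_+} \circ c^{-1}$, which is smooth, with nondegeneracy then coming from the transversality half of the criterion.
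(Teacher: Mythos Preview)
Your proof is correct and follows essentially the same approach as the paper, which simply asserts that the proposition ``follows immediately from Theorem \ref{thm:globalclass} and the classification of great sphere fibrations'' due to Gluck, Warner, and Yang. You have filled in the details the paper omits---in particular, you correctly note that closedness and connectedness of $M$ are already guaranteed by Lemmas \ref{lem:closed} and \ref{lem:opencontractible}, and you identify and resolve the one genuine subtlety, namely that the Gluck--Warner--Yang criterion is stated for fibrations of all of $S^n$ whereas $p$ fibers only $W$, by observing that the transversality-to-bad-cone condition is local in $\grassy$.
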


Even assuming that $p$ extends to a great sphere fibration of $S^n$, it seems difficult to write a condition which captures smoothness of this extension and is not tautological.  For example, consider a great circle fibration of $S^3$ which is smooth on the complement of one fiber.  Projecting to any tangent space which is parallel to this fiber yields a smooth nondegenerate line fibration of $\R^3$.

\section{Results for skew and sphere fibrations}
\label{sec:applications}

Here we state and prove a number of results for skew fibrations using the machinery developed in the previous sections.  When applicable, we discuss how such results are related to those for great sphere fibrations.

\subsection{Germs of great circle fibrations} We begin with a recent result of Cahn, Gluck, and Nuchi, who showed in \cite{CahnGluckNuchi} that every germ of a smooth fibration of $S^{2m+1}$ by oriented great circles extends to such a fibration of all of $S^{2m+1}$.  The idea is as follows: let $M \subset G_2(2m+2)$ be the base space of a great circle fibration of $S^{2m+1}$. Then for $P \in M$, the tangent space $T_P M$ corresponds to a linear map $\R^{2m} \to \R^{2m}$ with no real eigenvalues.   To show that each such linear map can play the role of the tangent space to $P$ for some smooth fibration,  it is possible to construct a smooth fibration which interpolates between a well-chosen Hopf-like fibration away from $P$ and a (local near $P$) fibration which has a prescribed tangent space at $P$.  This can then be perturbed near $P$ to match a prescribed germ, since the no-real-eigenvalue condition is open.

The interpolation is necessary because it may not be possible to define a global fibration using the prescribed map with no real eigenvalues, and even if such a fibration exists,  Theorem \ref{thm:gh} tells us that the resulting fibration might not be smooth.  However, there is no such issue for nondegenerate fibrations of $\R^n$, allowing for a simple proof of the corresponding statement, even for skew fibrations with higher-dimensional fibers.

\begin{namedthm}[\ref{thm:germ}]
Every germ of a smooth nondegenerate fibration of $\R^n$ by $\R^k$ extends to such a fibration of all of $\R^n$.
\end{namedthm}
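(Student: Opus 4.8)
The plan is to replace the germ by a compactly supported modification of its own \emph{linearization}: the linearization is a global nondegenerate fibration by Example \ref{ex:bilinear}, the modification agrees with the germ near the base point, and a $C^1$-smallness estimate keeps the whole map nondegenerate, so that Theorem \ref{thm:nonclass} applies. First I would normalize. After translating the base point of the germ to the origin and rotating so that the fiber through it is $\R^k \times \{0\}$ with orthogonal complement $\R^q = \{0\}\times\R^q$, the germ is represented by a smooth map $B \colon E \to \Hom(\R^k,\R^q)$ on a neighborhood $E$ of $0 \in \R^q$, with $B(0) = 0$; as before write $A(y)(t,\lambda) = B(y)t + \lambda y$. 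By Lemma \ref{lem:nonnon}, nondegeneracy at the origin says that $L \coloneqq dA_0 \colon \R^q \times \R^{k+1} \to \R^q$ is a nonsingular bilinear map, so by compactness there is $\delta > 0$ with $|L(\xi)(w)| \ge \delta\,|\xi|\,|w|$ for all $\xi \in \R^q$, $w \in \R^{k+1}$. Set $\tilde B \coloneqq dB_0 \colon \R^q \to \Hom(\R^k,\R^q)$, a linear map. Since $dA_0(\xi)(t,\lambda) = dB_0(\xi)t + \lambda\xi$, the linear map $\tilde A(y)(t,\lambda) \coloneqq \tilde B(y)t + \lambda y$ is precisely $L$, hence $d\tilde A_y \equiv L$ is nonsingular for every $y$; so Example \ref{ex:bilinear} produces a global smooth nondegenerate fibration of $\R^n$ determined by $\tilde B$, the linearized fibration.

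Next I would interpolate. Choose a smooth bump $\chi \colon \R^q \to [0,1]$ with $\chi \equiv 1$ on $B_\epsilon(0)$, $\chi \equiv 0$ off $B_{2\epsilon}(0)$, and $|d\chi_y| \le C_0/\epsilon$, and set
\[
\hat B \coloneqq \tilde B + \chi\cdot(B - \tilde B),
\]
a smooth map defined on all of $\R^q$ once $\overline{B_{2\epsilon}(0)} \subset E$. Because $B(0) = 0 = \tilde B(0)$ and $d(B - \tilde B)_0 = dB_0 - dB_0 = 0$, the difference $B - \tilde B$ vanishes to second order at $0$, so on $\overline{B_{2\epsilon}(0)}$ one has $|B - \tilde B| = O(\epsilon^2)$ and $|d(B - \tilde B)| = O(\epsilon)$ with constants controlled by the $C^2$-size of $B$ near $0$; together with $|d\chi| \le C_0/\epsilon$ this gives a uniform bound $|d\hat B_y(\xi) - \tilde B(\xi)| \le K\epsilon|\xi|$ for all $y,\xi \in \R^q$, where $K$ depends only on the germ. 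Writing $d\hat A_y(\xi)(t,\lambda) = d\hat B_y(\xi)t + \lambda\xi$ and comparing with $L$ yields $|d\hat A_y(\xi)(w) - L(\xi)(w)| \le K\epsilon|\xi|\,|w|$, hence $|d\hat A_y(\xi)(w)| \ge (\delta - K\epsilon)|\xi|\,|w|$. Taking $\epsilon < \delta/K$ (and small enough that $\overline{B_{2\epsilon}(0)} \subset E$) makes $d\hat A_y$ nonsingular for every $y \in \R^q$.

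It then remains to verify the two conditions of Theorem \ref{thm:nonclass} for $\hat B$. The first, nonsingularity of every $d\hat A_y$, has just been arranged. For the second, observe that $\hat B = \tilde B$ off $\overline{B_{2\epsilon}(0)}$, so for $x \in \R^q$ with $|x| > 2\epsilon$ the $\hat B$-fiber through $x$ is the same affine $k$-plane as the linearized fiber through $x$, in particular has the same distance $d(x)$ to the origin; since the linearized fibration is global it satisfies the distance condition of Theorem \ref{thm:nonclass}, and any sequence in $\R^q$ without an accumulation point eventually leaves $\overline{B_{2\epsilon}(0)}$, so the distance condition passes to $\hat B$. Theorem \ref{thm:nonclass} now produces a smooth nondegenerate fibration of all of $\R^n$ determined by $\hat B$, and since $\hat B = B$ on $B_\epsilon(0)$ this fibration realizes the prescribed germ.

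The step I expect to be the main obstacle is the third one: controlling the modification in the $C^1$-topology. One has to exploit that the nonsingular bilinear maps form an open set, made quantitative through the lower bound $|L(\xi)(w)| \ge \delta|\xi|\,|w|$, and balance the $1/\epsilon$ blow-up of $d\chi$ against the quadratic vanishing of $B - \tilde B$ at the origin; once that estimate is in place the rest is formal, given Lemma \ref{lem:nonnon}, Example \ref{ex:bilinear}, and Theorem \ref{thm:nonclass}.
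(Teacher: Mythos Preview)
Your proposal is correct and follows exactly the same approach as the paper: linearize via $dB_0$ to obtain a global nondegenerate fibration through Example~\ref{ex:bilinear}, then perturb it with a compactly supported bump to match the germ near $0$, and invoke Theorem~\ref{thm:nonclass}. The paper's proof is terser---it simply appeals to openness of the nonsingularity condition and the fact that the perturbation is compactly supported---whereas you have written out the quantitative $C^1$ estimate (balancing the $1/\epsilon$ from $d\chi$ against the quadratic vanishing of $B-\tilde B$) that makes that appeal rigorous.
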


\begin{proof}
Suppose we begin with the germ of a smooth nondegenerate fibration of $\R^n$, represented by a map $B \colon E \subset \R^{n-k} \to \Hom(\R^k,\R^{n-k})$, defined on a neighborhood $E$ of $0$ in $\R^{n-k}$.   Then $dA_0$ is a nonsingular bilinear map and hence yields a smooth nondegenerate fibration by the construction in Example \ref{ex:bilinear}.  Specifically, this fibration is given by $B' = dB_0 \colon \R^{n-k} \to \Hom(\R^k,\R^{n-k})$.  Now a small perturbation of $B'$ can be made to match $B$ in some neighborhood of $0$.  Since the nonsingularity condition is open, and since this perturbation does not affect the map near $\infty$ of $E$, Theorem \ref{thm:nonclass} guarantees that this perturbation yields a smooth fibration of $\R^n$.
\end{proof}

\subsection{Deformation of fibrations}  We have seen that a linear map $B \colon \R^{2m} \to \R^{2m}$ corresponds to a smooth nondegenerate line fibration of $\R^{2m+1}$ if and only if $B$ has no real eigenvalues.  According to McKay \cite[Lemma 6]{McKay}, the space of linear maps with no real eigenvalues deformation retracts to its subspace of linear complex structures (that is, linear maps which square to the negative identity), which in turn deformation retracts to its subspace of orthogonal complex structures (see e.g.\ \cite[Proposition 2.5.2]{McDuffSalamon}).   The next proposition follows immediately.

\begin{prop} The subspace of nondegenerate line fibrations of $\R^{2m+1}$ generated by linear maps $B \colon \R^{2m} \to \R^{2m}$ deformation retracts to its subspace of Hopf fibrations.
\end{prop}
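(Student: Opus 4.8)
The plan is to realize the assignment $B \mapsto(\text{the fibration generated by }B)$ as a homeomorphism between an open subset of $\mathrm{End}(\R^{2m})$ and the subspace $Y$ in question, transport along it the deformation retraction recalled above, and verify that the latter carries the orthogonal complex structures onto exactly the Hopf fibrations. To set this up I would fix once and for all a transverse copy $\R^{2m}=\{0\}\times\R^{2m}\subset\R\times\R^{2m}=\R^{2m+1}$ and put $\mathcal{L}\coloneqq\{B\in\mathrm{End}(\R^{2m}):B\text{ has no real eigenvalue}\}$, an open subset of $\mathrm{End}(\R^{2m})\cong\R^{4m^2}$ with its Euclidean topology. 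By Example \ref{ex:bilinear} applied to the bilinear map $(y,(t,\lambda))\mapsto tBy+\lambda y$ — which, exactly as in the proof of Corollary \ref{cor:noreal}, is nonsingular precisely when $B\in\mathcal{L}$ — each $B\in\mathcal{L}$ generates a global smooth nondegenerate line fibration $\Phi(B)$ of $\R^{2m+1}$ whose fiber through $y\in\{0\}\times\R^{2m}$ is the affine line $\{(t,\,tBy+y):t\in\R\}$. By construction $Y\coloneqq\Phi(\mathcal{L})$ is the subspace of nondegenerate line fibrations of $\R^{2m+1}$ generated by linear maps.

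Next I would check that $\Phi\colon\mathcal{L}\to Y$ is a homeomorphism. The line $\Phi(B)$ assigns to a point $(t_0,y_0)\in\R^{2m+1}$ is spanned by $(1,\,B(I+t_0B)^{-1}y_0)$, a rational expression in $(t_0,y_0)$ and in the entries of $B$, so $\Phi$ is continuous in the $C^\infty$ topology. It is injective with continuous inverse because $B$ is recovered from $\Phi(B)$ by restricting to the fixed transverse $\R^{2m}$ and reading off, over each $y$, the unique vector spanning the fiber direction whose first coordinate equals $1$, namely $(1,By)$; this depends continuously on the fibration. Granting this, let $\mathcal{J}\subset\mathcal{L}$ be the subspace of orthogonal complex structures, and let $r\colon\mathcal{L}\times[0,1]\to\mathcal{L}$ be the composite deformation retraction of $\mathcal{L}$ onto $\mathcal{J}$ furnished by \cite[Lemma 6]{McKay} and \cite[Proposition 2.5.2]{McDuffSalamon}. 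Then $s\coloneqq\Phi\circ r\circ(\Phi^{-1}\times\mathrm{id})\colon Y\times[0,1]\to Y$ is a continuous deformation retraction of $Y$ onto $\Phi(\mathcal{J})$.

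It remains to identify $\Phi(\mathcal{J})$ with the subspace of Hopf fibrations inside $Y$. For $J\in\mathcal{J}$ the bilinear map $(y,(t,\lambda))\mapsto tJy+\lambda y$ is complex scalar multiplication $\C^{m}\times\C\to\C^{m}$ for the complex structure $J$, so $\Phi(J)$ is the central projection of the Hopf great circle fibration of $S^{2m+1}$ determined by $J$; this is precisely the statement that, under the standard dictionary between great circle fibrations of $S^{2m+1}$, linear maps with no real eigenvalues, and their central projections (see \cite{McKay}, \cite{CahnGluckNuchi}, and Section \ref{sec:gcnon}), orthogonal complex structures correspond to Hopf fibrations. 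Conversely, any Hopf fibration lying in $Y$ is $\Phi(B)$ for some $B\in\mathcal{L}$, and the same dictionary forces $B\in\mathcal{J}$. The step I expect to be the main obstacle is exactly this last identification of the fixed set of the transported retraction with the Hopf fibrations: it rests on the external correspondence above rather than on the machinery of the present paper, and some care is needed to match normalizations. The homeomorphism claim for $\Phi$, while routine, also relies on the small observation that the defining linear map can be read back off continuously from the fibration.
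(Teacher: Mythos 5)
Your proposal is correct and follows essentially the same route as the paper, which simply cites McKay's deformation retraction of the no-real-eigenvalue linear maps onto orthogonal complex structures (via linear complex structures) and declares that the proposition ``follows immediately.'' You have merely made explicit the details the paper leaves implicit --- the homeomorphism $\Phi$ between the open set of admissible linear maps and the generated fibrations, the transport of the retraction, and the identification of orthogonal complex structures with Hopf fibrations --- all of which are consistent with the paper's conventions (e.g.\ its later observation that $p\mapsto ip$ on $\R^2$ generates the Hopf fibration of $\R^3$).
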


Though trivial in light of prior work, this result reduces the problem of deformation retracting the space of nondegenerate line fibrations to its subspace of Hopf fibrations to one of deformation retracting the space of nondegenerate line fibrations to its subspace of nondegenerate line fibrations given by linear maps.

\subsection{Great circle fibrations and contact structures}

In \cite{Gluck},  Gluck showed that the plane distribution orthogonal to any great circle fibration on $S^3$ is contact.  Then Gluck and Warner's classification, combined with Gray stability, can be used to show that any such contact structure is tight.  In \cite{HarrisonBLMS}, we showed that the plane distribution orthogonal to any nondegenerate line fibration of $\R^3$ is a tight contact structure.  The study of contact structures corresponding to (not necessarily skew) line fibrations of $\R^3$ was continued by the author \cite{HarrisonAGT} and Becker and Geiges \cite{BeckerGeiges}.  

In \cite{GluckYang}, Gluck and Yang construct examples of great circle fibrations on $S^{2m+1}$, $m \geq 2$, which do not correspond to contact structures.  The same construction can be used to directly construct nondegenerate line fibrations of $\R^{2m+1}$ which do not correspond to contact structures.

Define $B \colon \R^{2m} \to \Hom(\R,\R^{2m}) = \R^{2m}$ to be the linear map represented in the standard basis by a matrix $M$ with $2 \times 2$ blocks
\[
\left( \begin{array}{cc}
0 & \frac12 \\ -\frac12 & 0
\end{array}
\right)
\]
down the diagonal, and a single $2 \times 2$ identity matrix in the top right corner.  The details leading to this definition can be seen in \cite{GluckYang}, but the important observations are that:
\begin{compactitem}
\item the linear map $B$ has no real eigenvalues, and
\item the matrix $M - M^{tr}$ is not invertible.
\end{compactitem}

Now $B$ is linear, so for any $y \in \R^{2m}$, the linear map $dB_y$ can also be represented by $M$.  Thus for all $y \in \R^{2m}$, $dB_y$ has no real eigenvalues,  and hence $B$ corresponds to a smooth fibration of $\R^{2m+1}$ by oriented lines. 

Let $V$ be the unit vector field which defines the line fibration.   We would like to show that the $1$-form $\alpha$ dual to $V$ is not contact at $0$.   For $y \in \R^{2m}$ and standard coordinates $(y_1,\dots,y_{2m}, y_{2m+1})$, we write
\[
\alpha(y) = \frac{1}{|B(y)|^2 + 1}\left( B_1 dy_1 + \cdots + B_{2m} dy_{2m} + dy_{2m+1} \right).
\]
A short computation shows that $d\alpha_0 \big|_{\R^{2m}}$ is represented by $M - M^{tr}$.   Since this matrix is not invertible, $d\alpha_0 \big|_{\R^{2m}}$ is degenerate.

We have shown:

\begin{namedthm}[\ref{thm:contact}]
For odd $n \geq 5$, there exists a smooth nondegenerate line fibration of $\R^n$ such that the orthogonal hyperplane distribution is not a contact structure.
\end{namedthm}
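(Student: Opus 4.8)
The plan is to produce a single, completely explicit nondegenerate line fibration whose orthogonal distribution visibly fails the contact condition at one point, mimicking the construction Gluck and Yang used for great circle fibrations of $S^{2m+1}$. Write $n = 2m+1$ with $m \geq 2$. First I would write down a linear map $B \colon \R^{2m} \to \Hom(\R,\R^{2m}) = \R^{2m}$, represented in the standard basis by a matrix $M$ assembled from $2\times 2$ blocks $\left(\begin{smallmatrix} 0 & 1/2 \\ -1/2 & 0 \end{smallmatrix}\right)$ running down the diagonal, together with a single $2\times 2$ identity block in the top-right corner. Two properties of $M$ must then be checked by hand: that $M$ has \emph{no real eigenvalues} (it is block upper-triangular with diagonal blocks having eigenvalues $\pm i/2$), and that the skew-symmetrization $M - M^{tr}$ is \emph{singular} (the corner identity block is precisely what breaks the invertibility of the associated alternating form).

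Next I would feed this into the machinery of Section \ref{sec:fibrations}. Since $B$ is linear, $dB_y = M$ for every $y \in \R^{2m}$, so by Corollary \ref{cor:noreal} together with Example \ref{ex:bilinear} and Theorem \ref{thm:nonclass}, the map $B$ determines a smooth global nondegenerate line fibration of $\R^{2m+1}$: concretely one obtains a smooth unit vector field $V$ on $\R^{2m+1}$ whose value at $(y_1,\dots,y_{2m},y_{2m+1})$ is the normalization of $(B_1(y),\dots,B_{2m}(y),1)$, and all of whose integral curves are lines.

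I would then analyze the dual $1$-form. Let $\alpha$ be the $1$-form dual to $V$, so that up to the positive smooth factor $1/(|B(y)|^2+1)$ it equals $B_1\,dy_1 + \cdots + B_{2m}\,dy_{2m} + dy_{2m+1}$. The key local computation is the restriction of $d\alpha$ to the horizontal hyperplane at the origin: because $B(0) = 0$, the derivative of the normalizing factor contributes nothing there, so $d\alpha_0\big|_{\R^{2m}}$ is represented exactly by $M - M^{tr}$. Since that matrix is singular, $d\alpha_0$ is degenerate on $\R^{2m} = \Ker \alpha_0$, hence $\alpha \wedge (d\alpha)^m$ vanishes at $0$ and the orthogonal hyperplane distribution is not a contact structure. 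This establishes the theorem for $n = 2m+1$, and the hypothesis $n \geq 5$ is exactly the requirement $m \geq 2$ that makes room for the corner identity block.

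The step I expect to be the main obstacle is engineering the matrix $M$: the two conditions pull against each other, since the no-real-eigenvalue requirement wants $M$ to look like a rotation (whose skew-symmetrization is invertible), while we also need $M - M^{tr}$ to be singular. The block pattern above is designed to satisfy both at once, and confirming that it does — together with the verification that the normalizing factor genuinely drops out of $d\alpha_0$ because $B(0)=0$ — is the only place where real computation is needed; everything else follows formally from Theorem \ref{thm:nonclass} and Corollary \ref{cor:noreal}.
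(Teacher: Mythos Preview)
Your proposal is correct and essentially identical to the paper's proof: the paper uses exactly the same Gluck--Yang matrix $M$ (the $2\times 2$ blocks $\left(\begin{smallmatrix}0 & 1/2\\ -1/2 & 0\end{smallmatrix}\right)$ down the diagonal with a single $2\times 2$ identity in the top-right corner), checks the same two properties, and then computes that $d\alpha_0\big|_{\R^{2m}}$ is represented by the singular matrix $M - M^{tr}$. The only cosmetic difference is that you explicitly cite Example \ref{ex:bilinear} and Theorem \ref{thm:nonclass} to justify globality of the fibration, whereas the paper just asserts it.
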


The argument above can lead to a similar example on $S^{2m+1}$.  The differential of the inverse central projection at the point $0$, which sends $\R^{2m+1}$ to itself, in the sense that $\R^{2m+1}$ serves as the tangent space both to $0$ in $\R^{2m+1}$ and to the north pole in $S^{2m+1}$, is the identity.  Thus the corresponding hyperplane distribution is not contact there.  The induced great circle fibration does not complete to a great circle fibration of all of $S^{2m+1}$, but the extension result of Cahn, Gluck, and Nuchi may be applied to achieve an extension.  In spirit this is the argument of Gluck and Yang, though seen from the perspective of the associated line fibration.

The relationship between nondegenerate fibrations and contact structures can be explored in somewhat greater detail.   If $B \colon \R^{2m} \to \R^{2m}$ corresponds to a nondegenerate fibration, then for all $y \in \R^{2m}$, $dB_y$ has no real eigenvalues, and the contact condition can be easily studied if $dB_y$ is represented by a matrix $M_y$ in Jordan normal form.  For example, if every $dB_y$ is diagonalizable over $\C$, then every $M_y - M_y^t$ is invertible, and $B$ corresponds to a contact structure.  When $dB_y$ is not diagonalizable, the contact condition is a condition on the value of the imaginary part of those eigenvalues for which the geometric and algebraic multiplicity do not match.   For example, the examples above arise by observing that an eigenvalue with imaginary part $\frac12$ and defect $1$ causes noninvertibility of the matrix $M - M^{tr}$.  The corresponding inadmissible imaginary values for higher defects may be computed, but it is unclear whether there is a nice formula for the values in terms of the defect.   Nevertheless, it is clear that there exist precise pointwise conditions on a unit vector field $V$ corresponding to a line fibration which dictate whether the line fibration is contact.  It would be interesting to study tightness questions for such fibrations.

\subsection{Great circle fibrations from nonsingular bilinear maps}
\label{sec:gcnon}  We have seen two examples for which statements about skew fibrations followed more easily than their spherical counterparts, primarily due to the ability to construct skew fibrations globally using nonsingular bilinear maps (that is, linear maps with no real eigenvalues).

We now address the possibility of obtaining great circle fibrations of $S^{2m+1}$ by the inverse central projection of nondegenerate line fibrations of $\R^{2m+1}$.  In Theorem \ref{thm:equivalent} we gave explicit conditions, in the case $n=2k+1$, for when a skew/nondegenerate fibration corresponds via inverse central projection to a great sphere fibration.  The problem when $n \neq 2k+1$ is more difficult, because the inverse central projection must leave more than a single fiber uncovered. 

Suppose that a linear map $B \colon \R^{2m} \to \R^{2m}$ corresponds to a nondegenerate line fibration of $\R^{2m+1}$; in particular, this occurs if and only if $B$ has no real eigenvalues.  Then $B$ is invertible,  so each direction transverse to $E = \R^{2m}$ appears in the fibration.  Therefore the inverse central projection and subsequent completion to great circles covers $S^{2m+1} - S$, where $S = S^{2m-1}$ is the great sphere obtained from intersecting $S^{2m+1}$ with the translate of $E$ through the origin of $\R^{2m+2}$.

Now we see how we must define the fibration on $S$.  Let $u \in S$ and consider the sequence of points $x_n = nu \in E$, $n \in \N$.  The inverse central projection yields a sequence convering to $u$.   Moreover,  the inverse central projection maps the line through $x_n$ to the great circle spanned by the vectors $(nu,0,1)$ and $(B(nu),1,0) = (nB(u),1,0)$ in $\R^{2n+2}$.  The limiting great circle is spanned by $(u,0,0)$ and $(B(u),0,0)$, which are not multiples due to the no-real-eigenvalue condition.  Thus if we hope to achieve a continuous fibration, we must define the great circle through $u \in S$ as that spanned by $(u,0,0)$ and $(B(u),0,0)$.  However, this is a well-defined covering of $S$ if and only if the linear map $B$ leaves every plane of the form $\Span\left\{u,B(u)\right\}$ invariant.

\begin{definition}
We say that a linear map $B \colon \R^{2m} \to \R^{2m}$ with no real eigenvalues is \emph{invariant on planes} if for every nonzero $u \in \R^{2m}$, $B^2(u) \in \Span\left\{u,B(u)\right\}$.
\end{definition}

Examples of such $B$ are given by almost complex structures; that is, linear maps $J$ with $J^2 = -I$. 

We classify the maps $B$ which are invariant on planes.  Let $P_1$ be a plane of the form $\Span\left\{ u , B(u) \right\}$ for some $u \in S^{2n-1}$.  Then the restriction of $B$ to $P_1$ is a linear map $P_1 \to P_1$ and therefore $B\big|_{P_1}$ has two complex eigenvalues.   Now choose $u_2 \in P_1^\perp$, let $P_2 = \Span\left\{ u_2, B(u_2) \right\}$, and repeat this process.  In this way we see that $B$ is diagonalizable with complex eigenvalues $\lambda_1, \dots, \lambda_n$ and their conjugates.  We now show that $\lambda_1 = \cdots = \lambda_n$.

We write $B$ in real Jordan normal form, so it appears with $n$ $2 \times 2$ blocks down the diagonal, each of the form
\[
\left( \begin{array}{cc} a_i & -b_i \\ b_i & a_i \end{array} \right)
\]
for real $a_i$ and $b_i$.
In these coordinates we compute
\begin{align*}
B(B(e_2+e_4)) & = B(-b_1,a_1,-b_2,a_2,0,\dots,0) \\
& = (-2a_1b_1,-b_1^2+a_1^2,-2a_2b_2,-b_2^2+a_2^2,0,\dots,0) \\
& = 2a_1 B(e_2) - (a_1^2+b_1^2) e_2 + 2a_2 B(e_4) - (a_2^2+b_2^2) e_4.
\end{align*}
By the condition that $B$ is invariant on planes, we obtain $a_1 = a_2$ and $b_1 = \pm b_2$.   Similarly we obtain $a_i = a_j$ and $b_i = \pm b_j$ for all $i$ and $j$, and so we obtain $\lambda_1 = \cdots = \lambda_n$.

Thus the linear maps $B$ which generate nondegenerate fibrations and also correspond to great circle fibrations are those with complex eigenvalues $\lambda$ and $\bar{\lambda}$ with multiplicity $n$. 

\begin{prop} A nondegenerate line fibration of $\R^{2m+1}$ given by a linear map $B \colon \R^{2m} \to \R^{2m}$ corresponds to a (continuous) great circle fibration if and only if $B$ is invariant on planes.
\end{prop}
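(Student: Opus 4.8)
The plan is to prove both implications on top of the analysis already carried out above the statement, which shows that the only possible great circle over a direction $u\in S=S^{2m-1}$ is $S^{2m+1}\cap\bigl(\Span\{u,B(u)\}\times\{0\}\times\{0\}\bigr)$, and that assigning these circles is a well-defined covering of $S$ precisely when the $2$-planes $\Span\{u,B(u)\}\subset\R^{2m}$ partition $\R^{2m}\setminus\{0\}$. Throughout I keep the conventions above: $\R^{2m+1}=\R^{2m}\times\R$ with the last factor the common line-direction coordinate, $\R^{2m+2}=\R^{2m}\times\R\times\R$ with the final factor the central-projection height, $E=\R^{2m}$ is the domain of $B$, and $S=S^{2m+1}\cap(\R^{2m}\times\{0\}\times\{0\})$.

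\emph{Forward direction.} Suppose $B$ corresponds to a continuous great circle fibration $\Phi$ of $S^{2m+1}$. Restricted to $S^{2m+1}\setminus S$, $\Phi$ is the inverse-central-projection completion of the $B$-line fibration (Lemma \ref{lem:Wfibration}); applying Continuity at Infinity to the sequence $x_n=nu\in E$ forces $\Phi(u)=S^{2m+1}\cap(\Span\{u,B(u)\}\times\{0\}\times\{0\})$ for each $u\in S$. As $\Phi$ is a fibration these circles are pairwise disjoint, so the planes $\Span\{u,B(u)\}$ partition $\R^{2m}\setminus\{0\}$. Fix $u\neq0$; since $B$ has no real eigenvalue (Corollary \ref{cor:noreal}) it is invertible, hence $B(u)\neq0$, and the unique plane of the partition through $B(u)$ is simultaneously $\Span\{u,B(u)\}$ and $\Span\{B(u),B^2(u)\}$, so $B^2(u)\in\Span\{u,B(u)\}$. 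Thus $B$ is invariant on planes.

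\emph{Converse.} Suppose $B$ is invariant on planes. By the classification carried out above the statement, together with the absence of real eigenvalues, $B=aI+bC$ for a real linear complex structure $C$ on $\R^{2m}$ (i.e.\ $C^2=-I$) and some $b\neq0$. I exhibit the completed fibration explicitly. Let $e_t=(0,1,0)$ and $e_s=(0,0,1)$ span the last two coordinates of $\R^{2m+2}$, and define a linear complex structure $\tilde J$ on $\R^{2m+2}$ by $\tilde J\big|_{\R^{2m}}=C$ together with
\[
\tilde J e_s=\tfrac1b\,e_t-\tfrac ab\,e_s,\qquad \tilde J e_t=\tfrac ab\,e_t-\tfrac{a^2+b^2}{b}\,e_s;
\]
a direct check gives $\tilde J^2=-I$, and the second formula is forced by the first (the only natural requirement, namely that the $\tilde J$-complex line through $e_s$ be the linearized fiber $\alpha(P_0)$ over the origin) together with $\tilde J^2=-I$. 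The payoff is that for every $z\in\R^{2m}$ the $\tilde J$-complex line through $(z,0,1)$ is exactly $\Span\{(B(z),1,0),(z,0,1)\}=\alpha(P_z)$, so on the open upper hemisphere the fibration of $S^{2m+1}$ by $\tilde J$-complex lines coincides with the inverse central projection of the $B$-line fibration; by the antipodal symmetry it coincides on the open lower hemisphere too; and on $S$ the $\tilde J$-complex line through $(u,0,0)$ is $\Span\{u,Cu\}\times\{0\}\times\{0\}=\Span\{u,B(u)\}\times\{0\}\times\{0\}$, which is precisely the forced extension found above. Hence the completion of the $B$-fibration, extended over $S$ by these forced circles, is nothing but the fibration of $S^{2m+1}$ by the complex lines of $\tilde J$; and the latter is a continuous, locally trivial oriented great circle fibration, since projectivization with respect to $\tilde J$ is a proper submersion $S^{2m+1}\to\C P^m$ whose fibers are the (genuinely $2$-dimensional) $\tilde J$-complex lines intersected with $S^{2m+1}$, i.e.\ great circles.

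\emph{The main obstacle.} The forward direction is bookkeeping on top of Continuity at Infinity; the content is the converse. The natural temptation to realize the completed fibration as a literal Hopf fibration fails, because for $B=aI+bC$ the structure $\tilde J$ does not preserve the round metric unless $a=0$, $b=\pm1$, and $C$ is orthogonal. The point to absorb is that orthogonality of $\tilde J$ is irrelevant here: a complex line is a bona fide $2$-dimensional linear subspace, so it still meets $S^{2m+1}$ in a great circle, and the complex-line decomposition is automatically a continuous bundle. The only flexibility in building $\tilde J$ is its action on the $2$-plane $\Span\{e_t,e_s\}$ spanned by the line-direction and the projection height, and this is pinned down up to sign by requiring the complex line through $e_s$ to be $\alpha(P_0)$. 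A softer alternative for the converse would verify continuity of the completion directly at the equator using Lemma \ref{lem:pnonintersect} (which confines subsequential limits of equatorial fibers to $S$) together with a limit computation generalizing the $x_n=nu$ case, but that route is considerably more case-ridden.
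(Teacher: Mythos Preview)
Your proof is correct, and for the converse direction it takes a genuinely different route from the paper.

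For the forward implication both arguments are the same in substance: the discussion preceding the proposition forces the great circle through $u\in S$ to lie in $\Span\{u,B(u)\}\times\{0\}\times\{0\}$, and well-definedness of this assignment is equivalent to invariance on planes.

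For the converse, the paper proceeds by direct computation: it writes $B$ in real Jordan normal form, explicitly finds the fiber direction through an arbitrary point $(z,\tilde z)$ of the fibered $\R^{2m+1}$, parametrizes an arbitrary great circle through $u\in S$ transverse to $S$, projects to the hyperplane $u^\perp$, and verifies by a limit calculation (and then L'H\^opital for differentiability) that the assignment extends continuously across $S$. Your argument instead realizes the completed fibration in one stroke as the complex-line fibration of an explicit linear complex structure $\tilde J$ on $\R^{2m+2}$ extending $C$ (where $B=aI+bC$); the verification that $\tilde J(z,0,1)=\tfrac{1}{b}(B(z),1,0)-\tfrac{a}{b}(z,0,1)$ shows the $\tilde J$-lines agree with the linearized affine fibers $\alpha(P_z)$, and continuity (indeed smoothness) of the resulting great circle fibration is automatic from the smoothness of complex projectivization. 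This is cleaner and gives smoothness for free, whereas the paper's computation makes the limiting behavior visible and also illustrates the warning example (that the naive ``limiting direction'' map is not itself continuous, only its projection orthogonal to $u$ is). Either approach works; yours is more conceptual, the paper's more explicit.
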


We have shown that such a linear map corresponds to a covering of $S^{2m+1}$ by great circles if and only if $B$ is invariant on planes.  Though seemingly obvious that such a covering is continuous, we illustrate one surprising occurrence when checking continuity.

Recall that Continuity at Infinity for a fiber direction $u \in U$ asserts that if $x_n \in \R^3$ is a sequence of points with limiting direction $u$, then the sequence of fiber directions also approaches $u$.  To show that the extension is continuous, it would be sufficient to show the following ``Continuity at Infinity''-type property for each $u \in \partial U$: if $x_n \in \R^{2m+1}$ is a sequence of points with limiting direction $u$, then the sequence of directions through $x_n$ approaches $B(u)$.  It is easy to see that this holds on the domain of $B$ when $B$ is linear. 

The next example shows that this notion of continuity catastrophically fails, even for the Hopf fibration.

\begin{example}
Consider the linear map $B \colon \R^2 \to \R^2 \colon p \mapsto ip$.  Writing $E$ for the domain of $B$ and considering the decomposition $\R^3 = E \times \R$, $B$ induces the usual Hopf fibration of $\R^3$.   Another presentation of the Hopf fibration is by the vector field $V$ on $\R^3$, $V(x,y,z) = (xz-y,x+yz,1+z^2)$.   We have seen that the inverse central projection to $S^3$, via Continuity at Infinity, yields a fibration of $S^3$ minus a single great circle, which may then be completed to a continuous fibration of $S^3$.

Let $u = (u_1,u_2,0) \in E$ be unit, and consider the sequence $v + tu$, for $t \in \N$ and fixed $v=(v_1,v_2,v_3) \in u^\perp$.  Then $V(v+tu) = ((v_1+tu_1)v_3-(v_2+tu_2),  v_1+tu_1 + (v_2+tu_2)v_3, 1+v_3^2)$, and the limiting direction is $(u_1v_3-u_2,u_1+u_2v_3,0) = v_3u + B(u)$.

Thus by changing the height $v_3$, it is possible to obtain \emph{any} limiting direction $w$ on the circle spanned by $u$ and $B(u)$, as long as the orientation of $\left\{ u,w \right\}$ matches that of $\left\{ u, B(u) \right\}$. 
\end{example}

The example illustrates that the check for continuity of the fibration cannot be done by checking the continuity of the map sending a point on the upper hemisphere of $S^{2m+1}$ to the intersection of its great circle with the equator $S^{2m}$.   However, if we compose this with the projection to the line orthogonal to $u$ in the plane spanned by $u$ and $B(u)$, then we can readily verify continuity.

\begin{proof}
Let $B \colon \R^{2m} \to \R^{2m}$ be a linear map which has no real eigenvalues and is invariant on planes.  The map $B$ induces a fibration of $\R^{2m+1}$.  We consider coordinates $(x_1,\dots,x_{2m+1})$, where the first $2m$ coordinates span the domain $E = \R^{2m}$ of $B$.

Given a point $(z,\tilde{z}) = (z_1,\dots,z_{2m},\tilde{z})$ in the fibered $R^{2m+1}$, we determine the direction of the fiber through $(z,\tilde{z})$ by solving $(z,\tilde{z}) = (x,0) + t(B(x),1)$ over $x \in E$. Thus $x = (\Id + \tilde{z}B)^{-1}z$, which is well-defined since $B$ has no real eigenvalues, and so the fiber through $z$ has direction $(B(\Id + \tilde{z}B)^{-1}z,1)$.

Now we implement the characterization of maps $B$ which are invariant on planes.  Writing $B$ in the real Jordan normal form, each $2 \times 2$ block is equal to
\[
\left(
\begin{array}{cc} a & -b \\ b & a \end{array}
\right)
\]
for some fixed $a,b \in \R$.  Therefore $(\Id + \tilde{z}B)^{-1}$ has $2 \times 2$ blocks
\[
\frac{1}{(1+\tilde{z}a)^2+\tilde{z}^2b^2}
\left(
\begin{array}{cc}
1 + \tilde{z}a & -\tilde{z}b \\ \tilde{z}b & 1+\tilde{z}a
\end{array}
\right),
\]
and finally, $B(\Id + \tilde{z}B)^{-1}$ has $2 \times 2$ blocks
\[
\frac{1}{(1+\tilde{z}a)^2+\tilde{z}^2b^2}
\left(
\begin{array}{cc}
a + \tilde{z}(a^2+b^2) & -b \\ b & a+\tilde{z}(a^2+b^2)
\end{array}
\right).
\]
Thus the fiber through $z$ has direction $((\tilde{z}(a^2+b^2)\Id + B)z,(1+\tilde{z}a)^2+\tilde{z}^2b^2)$.

Now, as usual, we position the fibered $\R^{2m+1}$ as the tangent hyperplane at the north pole of  $S^{2m+1} \subset \R^{2m+2}$.   In coordinates $(x_1,\dots,x_{2m+2})$ on $\R^{2m+2}$, the fibered $\R^{2m+1}$ is the hyperplane $x_{2m+2} = 1$.   The set of directions achieved by this fibration corresponds to an open hemisphere of the equator $S^{2m}$, which we take to be the hemisphere determined by $x_{2m+1} > 0$.  Therefore the inverse central projection leaves $S \coloneqq S^{2m-1} = \left\{ x \ \big| \ x_{2m+1} = x_{2m+2} = 0 \right\}$ uncovered, and we define the great circle through $u \in S$ as that spanned by $u$ and $B(u)$.  

Let $u \in S$, and consider any great circle through $u$ which is transverse to $S$; say $\cos(t)u+\sin(t)v$, where $u = (u_1,\dots,u_{2m},0,0) \in S \subset \R^{2m+2}$, and $(v,v_{2m+1},v_{2m+2})=(v_1,\dots,v_{2m+2})$, with $v_{2m+2} \neq 0$, is orthogonal to $u$.  The central projection maps this great circle to the line $\frac{1}{v_{2m+2}}(v+ \cot(t) u)$ in the fibered $\R^{2m+1}$.  By the above computation, the direction of the fiber through a point on this line, given as a vector in $\R^{2m+2}$, is 
\[
\left(
\begin{array}{c} \frac{v_{2m+1}}{v_{2m+2}^2}(a^2+b^2)(v+\cot(t) u) + \frac{1}{v_{2m+2}}B(v + \cot(t) u) \\
(1+\frac{v_{2m+1}}{v_{2m+2}}a)^2+(\frac{v_{2m+1}}{v_{2m+2}} b)^2 \\
0
\end{array}
\right).
\]
Next we project to the hyperplane orthogonal to $u$, and using $\mathbf{B}$ to represent the projection of $B$ to the hyperplane orthogonal to $u$, we obtain
\[
\left(
\begin{array}{c} \frac{v_{2m+1}}{v_{2m+2}^2}(a^2+b^2)v + \frac{1}{v_{2m+2}}\mathbf{B}(v + \cot(t) u) \\
(1+\frac{v_{2m+1}}{v_{2m+2}}a)^2+(\frac{v_{2m+1}}{v_{2m+2}} b)^2 \\
0
\end{array}
\right).
\]
It is now easy to see that, as $t \to 0$, the limiting direction is $(\frac{1}{v_{2m+2}}\mathbf{B}(u), 0, 0)$, whose normalization agrees with that of $\mathbf{B}(u)$, and continuity is established.

More formally, and with the additional goal of establishing smoothness, we let $S(t)$ be the length of this vector.  We want to compute
\[
\lim_{t \to 0} \frac{1}{t} \left( \frac{1}{S(t)} \left(
\begin{array}{c} \frac{v_{2m+1}}{v_{2m+2}^2}(a^2+b^2)v + \frac{1}{v_{2m+2}}\mathbf{B}(v + \cot(t) u) \\
(1+\frac{v_{2m+1}}{v_{2m+2}}a)^2+(\frac{v_{2m+1}}{v_{2m+2}} b)^2 \\
0
\end{array}
\right) - \frac{1}{|\mathbf{B}(u)|} \left( \begin{array}{c} \mathbf{B}(u) \\ 0 \\ 0 \end{array} \right) \right).
\]
Now
\[
S(t) = \sqrt{C_1 + C_2\cot t + \frac{|\mathbf{B}(u)|}{v_{2m+2}}\cot^2 t},
\]
for some constants $C_1$ and $C_2$, and so
\[
\lim_{t \to 0} tS(t) = \frac{|\mathbf{B}(u)|}{v_{2m+2}}.
\]
Thus to show differentiability, it suffices to show that the coefficient of $\mathbf{B}(u)$, 
\[
\lim_{t \to 0}\left( \frac{\cot t}{t S(t) v_{2m+2}} - \frac{1}{t|\mathbf{B}(u)|} \right) = \frac{1}{v_{2m+2}} \lim_{t \to 0} \frac{1}{t}\left(\frac{\cot t}{S(t)} - \frac{v_{2m+2}}{|\mathbf{B}(u)|} \right),
\]
is finite.   This follows from a straightforward application of L'H{\^o}pital.
\end{proof}

One of the important consequences of this construction of great circle fibrations is a class of great circle fibrations of $S^{2m+1}$ with identical restriction to $S^{2m-1}$.  Indeed, if $B \colon \R^{2m} \to \R^{2m}$ is a linear map which has no real eigenvalues and is invariant on planes, then the great circle fibration on $S$ is defined by: the great circle through $u$ is that spanned by $u$ and $B(u)$.  Now if $B$ consists of $2 \times 2$ blocks
\[
\left( \begin{array}{cc} a & -b \\ b & a \end{array} \right),
\]
then $B = a\Id + bJ$, and therefore the plane spanned by $u$ and $B(u)$ is the same as that spanned by $u$ and $Ju$.

However, the great circles through points on the open upper hemisphere do genuinely depend on the values of $a$ and $b$.  Thus we have explicitly constructed a class of smooth great circle fibrations of $S^{2m+1}$ which restrict to the Hopf fibration on $S^{2m-1}$. 

The motivation for such examples is to reduce the problem of deforming a great circle fibration to the Hopf fibration to the problem of deforming a map $B$ corresponding to a nondegenerate line fibration to a linear map $B$ with no real eigenvalues and invariant on planes, since the latter problem can be carried out on $\R^{2m}$.  This reduction succeeds by eliminating the concern of the behavior of the fibration at infinity.

\section{Further thoughts}
\label{sec:conclusion}

We offer some final thoughts which could guide future research on line and great circle fibrations.

\subsection{Great circle fibrations on $S^5$}

Given a skew line fibration of $\R^5$, the inverse central projection covers the open upper and lower hemispheres, as well as the equatorial set $L = U \cup -U$, where $U \subset S^4 = G_1(5)$ is the base space of the fibration.  Letting $W$ represent this covered portion, the complement $S^5 \setminus W$ is some $3$-manifold in the equator $S^4$.  When can the induced covering be completed to a great circle fibration of $S^5$? Two necessary conditions are the following:
\begin{compactenum}
\item each $u \in \partial U$ corresponds to a unique limiting great circle $S(u)$
\item each pair of limiting great circles is either equal or disjoint.
\end{compactenum}
The second item can fail as follows: consider a skew line fibration of $\R^3$ for which $U \subset S^2$ is not an open hemisphere (a specific example is given in \cite{HarrisonBLMS}).  Then the uncovered portion is some $2$-dimensional subset of the equator, and any two limiting great circles intersect.

We do not know if the first item could fail: in particular, could there exist two sequences $u_n, u_n' \to u \in \partial U$ such that the limiting great circles (guaranteed by Lemma \ref{lem:pnonintersect}) are different?  It seems reasonable to believe that this is possible, since the condition corresponds to some rigidity in the limiting fibers corresponding to divergent sequence of points in the fibered $\R^5$.

In any case, if both items are satisfied, it seems that a great circle fibration of $S^5$ must result.  Every point $w$ uncovered by the original projected fibers is a limit of some $w_n \in S^5_+$, hence corresponds to elements $u_n \in U$ which approach $u \in \partial U$, and since $S(u)$ is well-defined as the limit of the great circles through $u_n$,  it must contain $w$.  Hence the corresponding great circles cover $S^5$, and continuity follows since the circles are added as limiting circles.

%\subsection{On a potential h-principle for nondegenerate fibrations}

%Another possible method of exhibiting a homotopy equivalence, also considered by McKay \cite{McKay} in the great circle case, would be to prove an h-principle for maps $\R^{2m} \to \R^{2m}$ for which the differential has no real eigenvalues.  Define the singularity set $\Sigma$ in the $1$-jet bundle $J^1(\R^{2m},\R^{2m}) = \R^{2m} \times \R^{2m} \times \Hom(\R^{2m},\R^{2m})$ as the triples $(x,y,L)$ for which $L$ has a real eigenvalue.  The complement of $\Sigma$ is open, but we were not successful in our attempts at applying the methods of holonomic approximation nor convex integration to this differential relation.

%For those familiar with the technique of convex integration, we offer a brief demonstration of how Gromov's condition of ampleness can fail.  The eigenvalues of
%\[
%L = \left(
%\begin{array}{cc} a & -1 \\ b & 0
%\end{array}
%\right)
%\]
%are those $\lambda$ satisfying $\lambda^2 - a\lambda + b = 0$, which are nonreal whenever $a^2-4b < 0$.  In the $ab$-plane, this determines the inside of a parabola, for which the convex hull is not the whole plane.

\subsection{Local and global fibrations}

As first observed by Ovsienko and Tabachnikov \cite{OvsienkoTabachnikov}, the dimensions $k$ and $n$ for which skew fibrations exist are independent of whether the fibrations are local or global.   This is not the case for great sphere fibrations.  The following three scenarios occur (for different values of $k$ and $n$):
\begin{compactitem}
\item there exists a fibration of $S^n$ by great $k$-spheres.
\item there exists a fibration of an open set of $S^n$ by great $k$-spheres, but no fibration of $S^n$ by great $k$-spheres.
\item no open set of $S^n$ can be fibered by great $k$-spheres.
\end{compactitem}

For example, if there exists a fibration of $\R^n$ by skew affine copies of $\R^k$,  with $k \notin \left\{ 0, 1, 3, 7\right\}$,  then the inverse central projection induces a fibration of a full measure open set of $S^n$ by great $k$-spheres which does not extend to a fibration of $S^n$.   In particular, if the skew fibration is given by a bilinear map (see Example \ref{ex:bilinear}), then the resulting great sphere fibration covers $S^n \setminus S^{n-k-1}$.

We do not know if there is an interesting study of such fibrations.  In \cite{Petro}, Petro studied great sphere fibrations of products $S^p \times S^q$ embedded in $S^{p+q+1}$, and he positively answered all three parts of Question \ref{ques:main} for great $3$-sphere fibrations of $S^3 \times S^3 \subset S^7$.

For some values of $n$, such as powers of $2$ and $n=80$, there is no skew fibration of $\R^n$, therefore there is no fibration of any open set in $S^n$ by great $k$-spheres for any $k$.   This observation has potential ramifications, since this may act as an obstruction when considering the structure of great sphere fibrations.  For example, consider a great circle fibration of $S^5$.  For any equator $S^4$,  the fact that there is no (local) fibration of $\R^4$ by skew lines implies that no $4$-dimensional subset of $S^5$ may be fibered by great circles, which in turn implies that the union of the great circles lying completely in $S^4$ is not $4$-dimensional (this was known to Gluck, Warner, and Yang \cite[Section 10]{GluckWarnerYang} by a different argument).  Since the great circles lying in the equatorial $S^4$ are precisely those which have no image under the central projection, the structure of this set is of great import.

\bibliographystyle{plain}
\bibliography{../bib}{}

\end{document}